\newcommand{\Z}{{\mathbb Z}}
\newcommand{\E}{{\mathbb E}}
\newcommand{\eps}{{\varepsilon}}
\newcommand{\C}{{\mathbb C}}
\newcommand{\R}{{\mathbb R}}
\newcommand\cA{{\cal  A}}
\newcommand\cB{{\cal  B}}
\newcommand\cU{{\cal  U}}
\newcommand\cH{{\cal  H}}
\newcommand\cV{{\cal  V}}
\newcommand\cC{{\cal  C}}
\newcommand\cI{{\cal  I}}
\newcommand\cG{{\cal  G}}
\newcommand\cL{{\cal  L}}
\newcommand\cN{{\cal  N}}
\newcommand\cE{{\cal  E}}
\newcommand\cF{{\cal  F}}
\newcommand\cP{{\cal  P}}
\newcommand\cO{{\cal O}}
\newcommand\cM{{\mathcal M}}
\newcommand{\bR}{\mathbb{R}}
\newcommand{\bL}{\mathbb{L}}
\newcommand{\bP}{\mathbb{P}}
\newcommand{\bE}{\mathbb{E}}
\newcommand{\bQ}{\mathbb{Q}}
\newcommand{\bA}{\mathbb{A}}
\newcommand{\bM}{\mathbb{M}}
\newcommand{\bT}{\mathbb{T}}
\newcommand{\bH}{\mathbb{H}}
\newcommand{\bC}{\mathbb{C}}
\newcommand{\bZ}{\mathbb{Z}}
\newcommand{\tA}{\tilde A}
\newcommand{\up}{{\underline p}}
\newcommand{\uq}{{\underline q}}
\newcommand{\ur}{{\underline r}}
\newcommand{\uv}{{\underline v}}
\newcommand{\utau}{\underline \tau}
\newcommand{\ueta}{\underline \eta}
\newcommand{\uG}{\underline G}
\newcommand{\uomega}{{\underline \omega}}
\def\eps{\epsilon }
\def\D{\partial }
\newcommand\adots{\mathinner{\mkern2mu\raise1pt\hbox{.}
\mkern3mu\raise4pt\hbox{.}\mkern1mu\raise7pt\hbox{.}}}
\newcommand\br{\begin{remark}}
\newcommand\er{\end{remark}}
\newcommand\bp{\begin{pmatrix}}
\newcommand\ep{\end{pmatrix}}
\newcommand\be{\begin{equation}}
\newcommand\ee{\end{equation}}
\newcommand\ba{\begin{equation}\begin{aligned}}
\newcommand\ea{\end{aligned}\end{equation}}
\newcommand{\bap}{\begin{app}}
\newcommand{\eap}{\end{app}}
\newcommand{\begs}{\begin{exams}}
\newcommand{\eegs}{\end{exams}}
\newcommand{\beg}{\begin{example}}
\newcommand{\eeg}{\end{exaplem}}
\newcommand{\bpr}{\begin{proposition}}
\newcommand{\epr}{\end{proposition}}
\newcommand{\bt}{\begin{theorem}}
\newcommand{\et}{\end{theorem}}
\newcommand{\bc}{\begin{corollary}}
\newcommand{\ec}{\end{corollary}}
\newcommand{\bl}{\begin{lem}}
\newcommand{\el}{\end{lem}}
\newcommand{\bd}{\begin{definition}}
\newcommand{\ed}{\end{definition}}
\newcommand{\brs}{\begin{remarks}}
\newcommand{\ers}{\end{remarks}}
\newtheorem{theo}{Theorem}[section]
\newtheorem{prop}[theo]{Proposition}
\newtheorem{lem}[theo]{Lemma}
\newtheorem{defn}[theo]{Definition}
\newtheorem{ass}[theo]{Assumption}
\newtheorem{exam}[theo]{Example}
\newtheorem{rem}[theo]{Remark}
\newtheorem{exams}[theo]{Examples}
\newtheorem{definition}[theo]{Definition}
\numberwithin{equation}{section}
 \title{Resonant leading term geometric optics expansions with boundary layers for quasilinear hyperbolic boundary problems}
\author{\sc \small
Matthew Hernandez\thanks{
University of North Carolina-Chapel Hill;
mbh3@princeton.edu}}
\begin{document}
%\layout
\maketitle

\begin{abstract}
We construct and justify leading order weakly nonlinear geometric optics expansions for nonlinear hyperbolic initial value problems, including the compressible Euler equations. The technique of simultaneous Picard iteration is employed to show approximate solutions tend to the exact solutions in the small wavelength limit. Recent work \cite{cgw} by Coulombel, Gues, and Williams studied the case of reflecting wave trains whose expansions involve only real phases. We treat generic boundary frequencies by incorporating into our expansions both real and nonreal phases. Nonreal phases introduce difficulties such as approximately solving complex transport equations and result in the addition of boundary layers with exponential decay. This also prevents us from doing an error analysis based on almost-periodic profiles as in \cite{cgw}.
\end{abstract}

\tableofcontents

\section{Introduction}
\emph{\quad}In this paper we consider quasilinear hyperbolic fixed boundary problems on the domain $\overline {\mathbb{R}}^{d+1}_+ = \{x=(x',x_d)=(t,y,x_d)=(t,x''):x_d\geq 0\}$ for a class of equations which includes, in particular, the compressible Euler equations. The class consists of systems of the following form:
\begin{align} \label{new51}
\begin{split}
& \sum^d_{j=0}A_j(v_\epsilon)\partial_{x_j}v_\epsilon=f(v_\epsilon),\\
&b(v_\epsilon)|_{x_d=0}=g_0+\epsilon G\left(x',\frac{x'\cdot\beta}{\epsilon}\right),\\
&v_\epsilon = u_0 \text{ in } t < 0.
\end{split}
\end{align}
Here, $A_0=I$, and we assume $A_j\in C^\infty(\mathbb{R}^N,\mathbb{R}^{N^2})$, $f\in C^\infty(\mathbb{R}^N,\mathbb{R}^N)$, and $b\in C^\infty(\mathbb{R}^N,\mathbb{R}^p)$. For the boundary data, we take $G(x',\theta_0)\in C^\infty(\mathbb{R}^d\times\mathbb{T}^1,\mathbb{R}^p)$ periodic in $\theta_0$ and supported in $\{x_0\geq 0\}$ and the boundary frequency $\beta=(\beta_0,\ldots,\beta_{d-1})\in\mathbb{R}^d\setminus 0$.

We construct leading order weakly nonlinear geometric optics expansions and justify the expansions by establishing their convergence in $L^\infty$ to the unique exact solutions in the large frequency limit, i.e. the small wavelength limit, represented by $\eps\rightarrow 0$. This paper completes the treatment of generic $\beta$ in the following sense: we allow for all $\beta$ but that in the glancing region, a set of measure zero which is often regarded as a singular case. The case of $\beta$ belonging to the hyperbolic region, the interior of a cone determined by the $A_j$, was treated in \cite{cgw}, where the authors constructed leading order expansions of highly oscillatory wavetrains with real phase functions. Our construction includes similar wavetrains with real phases, but in handling more general $\beta$ we are required to add to the expansions exponentially decaying oscillations, which we refer to as \emph{elliptic boundary layers}, featuring nonreal phase functions.

\subsection{The problem and its reformulations}

\emph{\quad}First we motivate and outline the relevant reformulations of the problem \eqref{new51} and the form of the expansion.

\subsubsection{The solution as a perturbation}

\emph{\quad} Seeking $v_\eps = u_0+\eps u_\eps$, a perturbation of a constant state $u_0$ with $f(u_0)=0$ and $b(u_0)=g_0$, we translate \eqref{new51} to the following problem for $u_\eps$ (where the $A_j$ have been altered accordingly:)
\begin{align}\label{new53}
\begin{split}
&(a)\;P(\eps u_\epsilon,\partial_x) u_\epsilon:=\sum^d_{j=0}A_j(\epsilon u_\epsilon)\partial_{x_j}u_\epsilon=\mathcal{F}(\epsilon u_\epsilon)u_\epsilon,\\ 
&(b)\;B(\epsilon u_\epsilon)u_\epsilon|_{x_d=0}=G\left(x',\frac{x'\cdot \beta}{\epsilon}\right),\\
&(c)\;u_\epsilon = 0 \text{ in } t<0.
\end{split}
\end{align}
where $B(v)$ and $\cF(v)$ denote the $p\times N$ and $N\times N$ real matrices, $C^\infty$ in $v$, defined by
\begin{align}\label{new55}
B(v)v=b(u_0+v)-b(u_0),\quad \cF(v)v=f(u_0+v).
\end{align}
Let us define the operator
\begin{align}\label{a8a}
L(\partial_x):=P(0,\partial_x)=\partial_t+\sum_{j=1}^dA_j(0)\partial_{x_j}.
\end{align}

We assume that $L(\partial_x)$ is hyperbolic with characteristics of constant multiplicity, making the following:
\begin{ass}
\label{assumption1}
The matrix $A_0=I$.  For an open neighborhood $\cal{O}$ of  $0\in\R^N$, there exists an integer $q \ge 1$, some real functions $\lambda_1,\dots,\lambda_q$ that are $C^\infty$ on $\cal{O} \;\times$ $\R^d\setminus\{0\}$, homogeneous of degree $1$, and analytic in $\xi$, and there exist some positive integers $\nu_1,\dots,\nu_q$
such that:
\begin{equation}\label{as1}
\det \Big[ \tau \, I+\sum_{j=1}^d \xi_j \, A_j(u) \Big] =\prod_{k=1}^q \big( \tau+\lambda_k(u,\xi) \big)^{\nu_k}
\end{equation}
for $u\in\cal{O}$, $\xi=(\xi_1,\dots,\xi_d)\in\R^d\setminus\{0\}$.
Moreover the eigenvalues $\lambda_1(u,\xi),\dots,\lambda_q(u,\xi)$ are semi-simple (their algebraic multiplicity
equals their geometric multiplicity) and satisfy $\lambda_1(u,\xi)<\dots<\lambda_q(u,\xi)$ for all $u\in\cal{O}$, $\xi \in \R^d
\setminus \{ 0\}$.
\end{ass}

Also, for our study we consider a noncharacteristic boundary $\{x_d=0\}$:
\begin{ass}
\label{assumption2}
For $u\in \cal{O}$ the matrix $A_d(u)$ is invertible and the matrix $B(u)$ has maximal rank, its rank $p$ being equal to the number
of positive eigenvalues of $A_d(u)$ (counted with their multiplicity).
\end{ass}

We now provide some notation regarding frequencies $\tau-i\gamma\in\C$ and $\eta\in\R^{d-1}$ dual to the variables $t$ and $y$, respectively. We define the matrix

\begin{align}
\cA(\zeta):= - i\, A_d^{-1}(0) \left( \tau \, I +\sum_{j=1}^{d-1} \eta_j \, A_j(0) \right),\quad \zeta:=(\tau-i\gamma,\eta)\in\C\times\R^{d-1},
\end{align}
and the sets of frequencies
\begin{align*}
& \Xi := \Big\{ (\tau-i\gamma,\eta) \in \C \times \R^{d-1} \setminus (0,0) : \gamma \ge 0 \Big\} \, ,
& \Sigma := \Big\{ \zeta \in \Xi : \tau^2 +\gamma^2 +|\eta|^2 =1 \Big\} \, ,\\
& \Xi_0 := \Big\{ (\tau,\eta) \in \R \times \R^{d-1} \setminus (0,0) \Big\} = \Xi \cap \{ \gamma = 0 \} \, ,
& \Sigma_0 := \Sigma \cap \Xi_0 \, .
\end{align*}
With these in mind, we define the symbol
\begin{align}
L(\tau,\xi):=\tau I+\sum^d_{j=1} \xi_j A_j(0).
\end{align}

The following is a result due to Kreiss \cite{kreiss} in the case of strict hyperbolicity, i.e. when all the eigenvalues in Assumption \ref{assumption1} have multiplicity $\nu_j=1$, and to M\'{e}tivier \cite{metivier} in our more general case.
\begin{prop}[\cite{kreiss,metivier}]
\label{thm1}
Let Assumptions \ref{assumption1} and \ref{assumption2} be satisfied. Then for all $\zeta \in \Xi \setminus
\Xi_0$, the matrix ${\mathcal A}(\zeta)$ has no purely imaginary eigenvalue and its stable subspace $\E^s (\zeta)$ has dimension $p$. Furthermore, $\E^s$ defines an analytic vector bundle over $\Xi \setminus \Xi_0$ that can
be extended as a continuous vector bundle over $\Xi$.
\end{prop}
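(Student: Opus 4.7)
The plan is to establish the three assertions of the proposition in sequence, treating the continuous extension to $\Xi_0$ as the main difficulty.

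For the first assertion (no purely imaginary eigenvalues on $\Xi \setminus \Xi_0$), I would begin with the observation that
$$\mathcal{A}(\zeta) - i\xi_d I = -iA_d^{-1}(0)\,L(\tau,(\eta,\xi_d)),$$
where the symbol $L$ is evaluated at the complex first component of $\zeta$. Hence $i\xi_d$ is an eigenvalue of $\mathcal{A}(\zeta)$ precisely when $L(\tau,(\eta,\xi_d))$ is singular, which by Assumption \ref{assumption1} means $\tau = -\lambda_k(0,\eta,\xi_d)$ for some $k$. Since the $\lambda_k$ are real valued, this forces $\operatorname{Im}\tau = -\gamma = 0$; contrapositively, on $\{\gamma>0\}$ there are no purely imaginary eigenvalues.

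For the second assertion, I would evaluate at the convenient reference point $\zeta_\star$ with first component $-i$ and $\eta = 0$, where $\mathcal{A}(\zeta_\star) = -A_d(0)^{-1}$. Its eigenvalues are $-1/\mu$ for $\mu \in \operatorname{spec} A_d(0)$, and by Assumption \ref{assumption2} exactly $p$ of these lie in the open left half plane, so $\dim \E^s(\zeta_\star) = p$. For arbitrary $\zeta \in \Xi\setminus\Xi_0$, the first step lets me choose a contour $\Gamma$ in the open left half plane enclosing all stable eigenvalues of $\mathcal{A}(\zeta)$ and write the Riesz projector
$$\Pi^s(\zeta) = \frac{1}{2\pi i}\oint_\Gamma (zI - \mathcal{A}(\zeta))^{-1}\,dz.$$
This projector is locally holomorphic in $\zeta$, hence of locally constant rank, and since $\Xi\setminus\Xi_0 = \{\gamma>0\}$ is connected the rank is globally equal to $p$; the same formula realises $\E^s$ as an analytic vector bundle there.

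The main obstacle is the third assertion, the continuous extension to $\Xi_0$. At a point $\underline{\zeta}\in\Xi_0$ a cluster of eigenvalues of $\mathcal{A}(\zeta)$ can collapse onto $i\R$, and the Riesz projector used above degenerates, so individual eigenvalues and eigenvectors generically fail to have continuous limits. The strategy, due to Kreiss in the strictly hyperbolic case and to M\'etivier in the present constant-multiplicity case, is a local block reduction: near each $\underline{\zeta}\in\Xi_0$, construct a continuous invertible change of basis $T(\zeta)$ on a neighborhood $\mathcal{V}$ of $\underline{\zeta}$ in $\Xi$ with
$$T(\zeta)^{-1}\mathcal{A}(\zeta)T(\zeta) = \blockdiag(\mathcal{A}_1(\zeta),\ldots,\mathcal{A}_J(\zeta)),$$
where each $\mathcal{A}_j(\underline{\zeta})$ carries a single purely imaginary eigenvalue $i\xi_d^j$ arising from a root $\tau = -\lambda_{k_j}(0,\eta,\xi_d^j)$, and distinct blocks have disjoint spectra. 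Assumption \ref{assumption1} (constant multiplicity, semi-simplicity, analyticity of $\lambda_k$ in $\xi$) then forces each $\mathcal{A}_j(\zeta)$ into a controlled normal form of size $\nu_{k_j}$ built from the Taylor expansion of $\lambda_{k_j}$. For each block I would track its spectrum as $\gamma\downarrow 0$: in the elliptic case the block's spectrum stays off $i\R$ throughout $\mathcal{V}$ and the Riesz projector of Step~2 extends continuously; in the hyperbolic or mixed cases with $\partial_{\xi_d}\lambda_{k_j}\neq 0$, the implicit function theorem tracks the eigenvalues explicitly and the sign of $\partial_{\xi_d}\lambda_{k_j}$ selects a fixed-dimensional stable subspace with a well-defined limit. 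Gluing these block-wise limits through $T(\zeta)$ produces the continuous extension of $\E^s$ to all of $\Xi$, and the hardest part of the whole argument is precisely the construction of the block reduction $T(\zeta)$ that remains continuous up to $\gamma=0$.
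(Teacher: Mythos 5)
The paper does not actually prove Proposition \ref{thm1}; it is quoted from Kreiss and M\'etivier, so your proposal can only be measured against that classical argument. Your first two steps are correct and are exactly the standard ones: the identity $\mathcal{A}(\zeta)-i\xi_d I=-iA_d^{-1}(0)L(\tau-i\gamma,(\eta,\xi_d))$ plus reality of the $\lambda_k$ rules out purely imaginary eigenvalues when $\gamma>0$, and the Hersh-type argument (Riesz projector locally holomorphic, rank locally constant, $\{\gamma>0\}$ connected, explicit count at $\zeta_\star$ where $\mathcal{A}(\zeta_\star)=-A_d^{-1}(0)$ and $A_d(0)$ has real spectrum with exactly $p$ positive eigenvalues by Assumptions \ref{assumption1} and \ref{assumption2}) gives $\dim\E^s=p$ and analyticity of the bundle on $\Xi\setminus\Xi_0$.

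The gap is in your third step, which is the entire content of the Kreiss--M\'etivier theorem. Your case analysis near a point of $\Xi_0$ covers only two situations: blocks whose spectrum stays off $i\R$ (elliptic) and blocks attached to roots with $\partial_{\xi_d}\lambda_{k_j}\neq 0$, where the implicit function theorem tracks the eigenvalues. But $\Xi_0$ in general contains \emph{glancing} frequencies, where $\partial_{\xi_d}\lambda_{k_j}(0,\eta,\xi_d^j)=0$ (these are excluded from the boundary frequency $\beta$ only later, via Assumption \ref{assumption4}; Proposition \ref{thm1} asserts the continuous extension over \emph{all} of $\Xi$). At such points the block $\mathcal{A}_j$ is not diagonalizable in the limit even under constant multiplicity and semisimplicity of the $\lambda_k$: one must produce the Jordan-type normal form of the block structure condition, determine how many eigenvalues of the perturbed block move into each half plane as $\gamma\downarrow 0$, and prove that the corresponding stable subspace nevertheless has a limit -- this is precisely the delicate part of Kreiss's construction and of M\'etivier's extension to constant multiplicity, and it is what your phrase ``controlled normal form of size $\nu_{k_j}$ built from the Taylor expansion of $\lambda_{k_j}$'' glosses over. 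Acknowledging that the block reduction $T(\zeta)$ is the hardest step is honest, but as written the proposal neither constructs it nor handles the glancing blocks, so the continuous extension to $\Xi_0$ -- the only nontrivial claim of the proposition -- remains unproved.
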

For $(\tau,\eta)\in\Xi_0$, we define $\E^s(\tau,\eta)$ to be the continuous extension obtained in Proposition \ref{thm1} of $\E^s$ to $(\tau,\eta)$.

Now we describe our assumption of \emph{uniform stability}, defined as in \cite{kreiss,cp}:
\begin{defn}\label{unifStability}
The problem \eqref{new53} is \emph{uniformly stable} at $u=0$ if the linearized operators $(L(\partial_x),B(0))$ at
$u=0$ are such that
\begin{align}
B(0):\E^s(\tau-i\gamma,\eta)\to\C^p \text{ is an isomorphism for all }(\tau-i\gamma,\eta)\in\Sigma.
\end{align}
\end{defn}

\begin{ass}\label{assumption3}
The problem \eqref{new53} is \emph{uniformly stable at $u=0$}.
\end{ass}

We now discuss an important example, the Euler equations, which satisfies the above assumptions so that our main results may be applied. We remark, however, that we also require Assumption \ref{assumption4}, an assumption on the boundary frequency $\beta$, which is explained in Section \ref{approxSolnConstruc}.

\begin{exam}[Euler equations] \label{EulerEqns}
The following are the isentropic, compressible Euler equations in three space dimensions on the half space $\{x_3\geq 0\}$, in the unknowns density $\rho$ and velocity $u=(u_1,u_2,u_3)$:
\begin{align}\label{a7aa}
\partial_t\begin{pmatrix}\rho\\\rho u_1\\\rho u_2\\\rho u_3\end{pmatrix}+\partial_{x_1}\begin{pmatrix}\rho u_1\\\rho u_1^2+p(\rho)\\\rho u_2u_1\\ \rho u_3 u_1\end{pmatrix}+\partial_{x_2}\begin{pmatrix}\rho u_2\\\rho u_1 u_2\\\rho u_2^2+p(\rho)\\\rho u_3u_2\end{pmatrix}+\partial_{x_3}\begin{pmatrix}\rho u_3\\\rho u_1u_3\\\rho u_2u_3\\\rho u_3^2+p(\rho)\end{pmatrix}=\begin{pmatrix}0\\0\\0\\0\end{pmatrix},
\end{align}
where $p(\rho)$ is the pressure. The hyperbolicity assumption, Assumption \ref{assumption1}, is satisfied in the region of state space where $\rho>0$, $c^2=p'(\rho)>0$. In this case the eigenvalues $\lambda_k(\rho,u,\xi)$ are
\begin{align}\label{EulerEvals}
\lambda_1=u\cdot\xi-c|\xi|,\; \lambda_2= u\cdot \xi, \;\lambda_3=u\cdot\xi+c|\xi|, \text{ with }(\nu_1,\nu_2,\nu_3)=(1,2,1).
\end{align}

For this problem, the boundary condition we impose is the natural ``residual boundary condition,'' which is obtained in the vanishing viscosity limit of the compressible Navier-Stokes equations with Dirichlet boundary conditions. Fixing any constant state $(\rho,u)$ with $u_3\notin\{0,-c,  c\}$ about which to linearize the problem, so that $(\rho,u)=u_0$ in \eqref{new51}, we have noncharacteristic boundary $\{x_3=0\}$ for the system. Consider in particular Assumption \ref{assumption3} in each of the following cases:

(a)\textbf{Subsonic outflow: $u_3<0$, $|u_3|<c$}. In this case there is exactly one positive eigenvalue of $A_3(\rho,u)$ ($p=1$), so we need one scalar boundary condition. Taking $b(\rho,u)=u_3$ in \eqref{new51}, we have $B(0)=\begin{bmatrix} 0&0&0&1 \end{bmatrix}$ and Assumption \ref{assumption3} is satisfied.

(b)\textbf{Subsonic inflow: $0<u_3<c$}. For this we get $p=3$ and boundary condition $b(\rho,u)=(\rho u_3,u_1,u_2)$ and linearized operator
\begin{align}
B(0)(\dot\rho, \dot u)=(\dot \rho u_3+\rho \dot u_3, \dot u_1, \dot u_2),
\end{align}
which satisfies Assumption \ref{assumption3}.

(c)\textbf{Supersonic inflow: $0<c<u_3$}. This case is trivial, with $p=4$ and $B(0)$ the $4\times 4$ identity matrix, and so Assumption \ref{assumption3} holds.

(d)\textbf{Supersonic outflow: $u_3<0$, $|u_3|>c$}. This is another trivial case, with $p=0$, where $B(0)$ is absent, meaning Assumption \ref{assumption3} holds vacuously.

With clear modifications of the above statements, the same holds for the 2D Euler equations, which are in fact strictly hyperbolic. For complete proofs and discussion verifying these cases, we refer the reader to \cite{gmwz}, Section 5.
\end{exam}

To study geometric optics for nonlinear problems, it is important to establish the existence of exact solutions on a fixed time interval independent of the wavelength $\epsilon$. For the system \eqref{new53} and thus \eqref{new51}, this was proved by M. Williams in \cite{williams2} through the use of the singular system, which we discuss further in section \ref{testlabel}.

\subsubsection{The approximate solution constructed in terms of an $M$-periodic function}\label{approxSolnConstruc}

\emph{\quad} The goal of this paper is to obtain qualitative information about the exact solution to \eqref{new53} by explicitly constructing an approximate solution with a geometric optics expansion that exhibits the qualitative information, and then showing that this approximate solution tends to the exact solution as $\eps\rightarrow 0$.

\textbf{Regular boundary frequencies.}

We now discuss an assumption regarding the boundary frequency $\beta=(\utau,\ueta)\in\bR\times\bR^{d-1}\setminus(0,0)$ which is relevant to the construction of the approximate solution. The planar real phase $\phi_0$ is defined on the boundary by
\begin{align}
\phi_0(x'):=\utau t + \ueta\cdot y = \beta\cdot x'.
\end{align}
Note the relationship to the boundary data \eqref{new53}(b). The oscillations on the boundary associated to $\phi_0$ result in oscillations associated to some planar phases $\phi_m$, which are characteristic for $L(\partial_x)$ and which have trace on the boundary equal to $\phi_0$. Such oscillations largely motivate the ansatz for our approximate solution, dependent on the planar phases $\phi_m$. The following assumption allows us to obtain the $\phi_m$.

\begin{ass}\label{assumption4}
  For $\zeta=(\tau-i\gamma,\eta)\in\C\times\R^{d-1}$  consider the matrix
  \begin{equation}
\label{new32}
\frac{1}{i}{\mathcal A}(\tau-i\gamma,\eta)= - \, A_d^{-1}(0) \left( (\tau-i\gamma) \, I +\sum_{j=1}^{d-1} \eta_j \, A_j(0) \right).
\end{equation}
Let $\uomega_m$, $m=1,\dots,M$ denote the distinct eigenvalues of $\frac{1}{i}{\mathcal A}(\utau,\ueta)$.  We suppose that each
$\uomega_m$ is a semisimple eigenvalue with multiplicity denoted by $\mu_m$.  Moreover, we assume there is a conic neighborhood $\cO$ of $\beta$ in $\C\times \R^{d-1}\setminus \{0\}$ on which the eigenvalues of $\frac{1}{i}{\mathcal A}(\zeta)$ are semisimple and given by smooth functions $\omega_m(\zeta)$, $m=1,\dots, M$ where $\uomega_m=\omega_m(\beta)$ and $\omega_m(\zeta)$ is of constant multiplicity $\mu_m$.
\end{ass}
\begin{rem}
\emph{Using Assumption \ref{assumption4}, one can show that the smooth functions $\omega_m(\zeta)$ are in fact analytic on a conic neighborhood of $\beta$. In particular, the analyticity is used to prove Lemma \ref{SubspaceDecompLem}.}
\end{rem}
We call $\beta$ a \emph{regular boundary frequency} provided Assumption \ref{assumption4} holds. Now, we define the phase functions
\begin{align}\label{new64}
\phi_m(x) := \phi_0(x')+\uomega_m x_d = (\beta,\uomega_m)\cdot x  ,
\end{align}
and we have the complex characteristic vector field associated to $\phi_m$:
\begin{align}\label{charvectorfield}
X_{\phi_m}:=\partial_{x_d}+\sum^{d-1}_{j=0}-\partial_{\xi_j}\omega_m(\beta)\partial_{x_j},
\end{align}
which is real for real $\uomega_m$. Moreover, for each $\uomega_m$ which is real, Assumption \ref{assumption1} implies there is a unique $k_m\in\{1,\dots,q\}$ such that
$\utau+\lambda_{k_m}(\ueta,\uomega_m)=0$, and in this case we have $\mu_m=\nu_{k_m}$.\footnote{Assumption \ref{assumption1} has no immediate implication for the multiplicity of nonreal $\uomega_m$.} With this in mind, we make the following definition:

\begin{definition} (i) For $m$ such that $\uomega_m$ is real, we call $(\beta,\uomega_m)$ a \emph{hyperbolic mode} if 
\begin{align}\label{c}
\partial_{\xi_d}\lambda_{k_m}(\ueta,\uomega_m)\neq 0.
\end{align}

(ii) For $m$ with nonreal $\uomega_m$, we call $(\beta,\uomega_m)$ an \emph{elliptic mode}.
\end{definition}
\begin{rem}\label{modeProps}
(i) \textup{For each real $\uomega_m$, Assumption \ref{assumption4} guarantees that $(\beta,\uomega_m)$ is a hyperbolic mode. That \eqref{c} holds is a consequence of the semisimplicity of $\uomega_m$.\footnote{If $\uomega_m$ is real and  $\partial_{\xi_d}\lambda_{k_m}(\ueta,\uomega_m)= 0$, we refer to $(\beta,\uomega_m)$ as a \emph{glancing} mode. An explanation of how \eqref{c} follows from semisimplicity can be found in the proof of Lemma 2.7 of \cite{metivier}.} Also, the condition \eqref{c} and the implicit function theorem imply that $\omega_m$ is real and of multiplicity $\nu_{k_m}$  in a neighborhood of $\beta$. Thus, there is a slight redundancy in Assumption \ref{assumption4} when $\uomega_m$ is real.}

(ii) \textup{When $\uomega_m$ is nonreal, the fact that the $A_j$ are real implies that $\frac{1}{i}{\mathcal A}(\beta)$ also has the complex conjugate $\overline{\uomega}_m$ as an eigenvalue, and thus $\overline{\uomega}_m=\uomega_{m'}$ for some $m'\neq m$. Furthermore, if the vector $r\in\bC^N$ is an eigenvector of $\frac{1}{i}{\mathcal A}(\beta)$ associated to $\uomega_m$, then $\overline{r}$ is an eigenvector associated to $\overline{\uomega}_m=\uomega_{m'}$.}

(iii) \textup{The boundary frequency $\beta$ lies in the hyperbolic region (as defined in \cite{cgw}) if and only if Assumption \ref{assumption4} holds with all $\uomega_m$ real. The elliptic region consists of all $\beta$ such that Assumption \ref{assumption4} holds with all $\uomega_m$ nonreal.}
\end{rem}

\begin{exam}
We return to the 3D Euler equations, linearizing about $(\rho,u)=(\rho,u_1,u_2,u_3)$, as in Example \ref{EulerEqns}.

For $|u_3|>c$ the set of regular boundary frequencies is all of $\R^3\setminus 0$. In the case $|u_3|<c$, one finds that the set of regular boundary frequencies is 
\begin{align}
\left\{(\tau,\eta)\in\bR^3: |\tau+u_1\eta_1+u_2\eta_2|\neq \sqrt{c^2-u_3^2}\;|\eta|\right\}.
\end{align}

We remark that in the former case, the hyperbolic region is also $\cH=\R^3\setminus 0$, and in the latter case $\cH=\left\{(\tau,\eta)\in\bR^3: |\tau+u_1\eta_1+u_2\eta_2|> \sqrt{c^2-u_3^2}\;|\eta|\right\}$.
\end{exam}

For real $\uomega_m$ we have the associated real group velocity:
\begin{align}\label{groupvelocity}
{\bf v}_m := \nabla \lambda_{k_m} ( \ueta, \uomega_m),
\end{align}
which has the following relationship with the characteristic vector field \eqref{charvectorfield}:
\begin{align}\label{b24aa}
\partial_{\xi_0}\omega_m(\beta)=-\frac{1}{\partial_{\xi_d}\lambda_{k_m}(\ueta,\uomega_m)},\;\partial_{\xi_j}\omega_m(\beta)=-\frac{\partial_{\xi_j}\lambda_{k_m}(\ueta,\uomega_m)}{\partial_{\xi_d}\lambda_{k_m}(\ueta,\uomega_m)},\; j=1,\dots,d-1.
\end{align}
Observe that each group velocity ${\bf v}_m$ can be thought of as either incoming or outgoing with respect to the interior of the domain $\R^d_+$, in particular since the last coordinate of ${\bf v}_m$ is nonzero, by \eqref{c}. With this in mind, we classify the phases in the following way:
\begin{definition}
\label{def2}
For real $\uomega_m$, the phase $\phi_m$ is incoming if the group velocity ${\bf v}_m$ is incoming (that is, $\partial_{\xi_d} \lambda_{k_m}
(\underline{\eta},\underline{\omega}_m)>0$), and it is outgoing if the group velocity ${\bf v}_m$ is outgoing
($\partial_{\xi_d} \lambda_{k_m} (\underline{\eta},\underline{\omega}_m) <0$).
\end{definition}
With this classification of the real phases, we let $\cI$ denote the set of indices $m\in\{1,\ldots,M\}$ such that $\phi_m$ is incoming and $\cO$ the set of $m$ such that $\phi_m$ is outgoing. We classify the remaining complex phases $\phi_m$, which correspond to nonreal $\uomega_m$, by the distinction that $\cP$ is the set of $m$ such that $\textrm{Im }\uomega_m>0$ and $\cN$ is the set of $m$ such that $\textrm{Im }\uomega_m<0$. We thus form the partition
\begin{align}
\{1,\ldots,M\}=\cI\cup\cO\cup\cP\cup\cN.
\end{align}

Recall the $\omega_m(\tau,\eta)$ are eigenvalues of $\frac{1}{i}\cA(\tau,\eta)$, meaning the $i\omega_m(\tau,\eta)$ are the eigenvalues of $\cA(\tau,\eta)$. Consider also the fact that $\textrm{Im }\omega_m(\tau,\eta)>0$ if and only if $\textrm{Re } i\omega_m(\tau,\eta)<0$. From these observations, we see that the stable subspace $\E^s(\tau,\eta)$ of $\cA(\tau,\eta)$ must contain each of the eigenspaces corresponding to some $\omega_m(\tau,\eta)$ with $\textrm{Re } i\uomega_m<0$, so that $\E^s(\utau,\ueta)$ contains the subspaces $\textrm{Ker }L(d\phi_m)$ for $m\in\cP$. The incoming phases, corresponding to $m\in\cI$, also play a role in setting up a decomposition for the stable subspace $\E^s$ at the boundary, which is established in the following lemma.

\begin{lem}\label{SubspaceDecompLem}
The stable subspace $\E^s(\utau,\ueta)$ admits the decomposition:
\textup{
\begin{align}\label{sSubspaceDecomp}
\E^s(\utau,\ueta)=\oplus_{m\in\cI\cup\cP}\textrm{Ker }L(d\phi_m),
\end{align}
}
where, in the decomposition, the vector spaces \textup{$\textrm{Ker }L(d\phi_m)$} for $m\in\cI$ admit a basis of real vectors.
\end{lem}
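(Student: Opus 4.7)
My plan is to reduce the decomposition to a spectral statement about the matrix $\cA(\utau,\ueta)$, verify it in the interior $\gamma>0$ via Assumption \ref{assumption4} together with a perturbation computation, and then pass to the limit $\gamma\to 0^+$ using the analytic Riesz spectral projectors guaranteed by constant multiplicity.

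First I would identify each $\ker L(d\phi_m)$ as an eigenspace of $\cA(\utau,\ueta)$. Since $d\phi_m = (\beta,\uomega_m)$ and $A_d(0)$ is invertible, a direct calculation gives
\begin{align*}
L(d\phi_m) = \utau\, I + \sum_{j=1}^{d-1} \ueta_j A_j(0) + \uomega_m A_d(0) = i A_d(0)\bigl[\cA(\utau,\ueta) - i\uomega_m I\bigr],
\end{align*}
so $\ker L(d\phi_m) = \ker\bigl(\cA(\utau,\ueta) - i\uomega_m I\bigr)$ for every $m$. The lemma thus reduces to the spectral identity $\E^s(\utau,\ueta) = \oplus_{m\in\cI\cup\cP}\ker(\cA(\utau,\ueta) - i\uomega_m I)$.

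Next I would work at $\zeta = (\utau - i\gamma,\ueta)$ for $\gamma>0$ small. Assumption \ref{assumption4} provides semisimple eigenvalues $i\omega_m(\zeta)$ of $\cA(\zeta)$ of constant multiplicity $\mu_m$, so $\bC^N$ splits as the direct sum of the eigenspaces $\range\,\Pi_m(\zeta)$, with Riesz projectors
\begin{align*}
\Pi_m(\zeta) \;=\; \frac{1}{2\pi i}\oint_{|z - i\omega_m(\zeta)| = \delta}\bigl(zI - \cA(\zeta)\bigr)^{-1}\,dz
\end{align*}
that are analytic on a conic neighborhood of $\beta$ (using the analyticity of $\omega_m$ noted after Assumption \ref{assumption4} together with the persistent isolation of the $\omega_m$). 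For $\gamma>0$, $\E^s(\zeta)$ is the sum of those summands for which $\textrm{Re}\,(i\omega_m(\zeta)) = -\textrm{Im}\,\omega_m(\zeta) < 0$. For $m\in\cP\cup\cN$ the sign of $\textrm{Im}\,\omega_m$ is fixed by continuity at $\beta$. For real $\uomega_m$, differentiating the dispersion identity $\utau - i\gamma + \lambda_{k_m}(\ueta,\omega_m(\utau - i\gamma,\ueta)) = 0$ at $\gamma=0$ yields
\begin{align*}
\textrm{Im}\,\omega_m(\utau - i\gamma,\ueta) = \frac{\gamma}{\partial_{\xi_d}\lambda_{k_m}(\ueta,\uomega_m)} + O(\gamma^2),
\end{align*}
so by Definition \ref{def2} the incoming indices $m\in\cI$ give $\textrm{Im}\,\omega_m > 0$ and the outgoing indices $m\in\cO$ give $\textrm{Im}\,\omega_m < 0$ for small $\gamma>0$. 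Therefore
\begin{align*}
\E^s(\zeta) = \bigoplus_{m\in\cI\cup\cP}\range\,\Pi_m(\zeta),
\end{align*}
and the dimension balance $\sum_{m\in\cI\cup\cP}\mu_m = p$ is forced by $\dim\E^s(\zeta) = p$ from Proposition \ref{thm1}.

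Finally I would pass to $\gamma\to 0^+$. The projectors $\Pi_m(\zeta)$ extend analytically to $\beta$ by the Riesz formula, and $\E^s$ extends continuously to $\Xi_0$ by Proposition \ref{thm1}, so $\sum_{m\in\cI\cup\cP}\Pi_m(\zeta)$ converges in operator norm to a projector whose range is $\E^s(\utau,\ueta)$ and whose summands have the prescribed dimensions $\mu_m$; the decomposition therefore persists at the boundary. For $m\in\cI$, $\uomega_m$ is real, so $L(d\phi_m)$ has real entries and $\ker L(d\phi_m)$ admits a basis of real vectors. The principal obstacle is precisely this passage to the limit: a priori the $\omega_m(\zeta)$ could coalesce or lose semisimplicity as $\gamma\to 0$ and the projectors could blow up, and the constant-multiplicity clause of Assumption \ref{assumption4} (together with the analyticity from the accompanying remark) is exactly what prevents such pathologies and keeps each $\Pi_m(\zeta)$ bounded up to the boundary.
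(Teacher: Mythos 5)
Your proposal is correct and follows essentially the same route as the paper's proof: identify the modes for small $\gamma>0$ whose eigenvalues $i\omega_m(\utau-i\gamma,\ueta)$ have negative real part (the $\cP$-modes by continuity, the $\cI$-modes by the first-order expansion $\textrm{Im }\omega_m\approx\gamma/\partial_{\xi_d}\lambda_{k_m}$, which is the same computation as the paper's use of \eqref{b24aa} and analyticity), and then pass to $\gamma\to 0^+$ using the continuous extension of $\E^s$ from Proposition \ref{thm1}. The paper is terser, proving only the containments $\textrm{Ker }L(d\phi_m)\subset\E^s(\utau,\ueta)$ and citing Lemma \ref{lem2} for the real bases, whereas you make the dimension count and the limiting step explicit via Riesz projectors; these are elaborations of the same argument rather than a different proof.
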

\begin{proof}
It is easy to show that the subspaces $\textrm{Ker }L(d\phi_m)$ for $m\in\cP$ are in $\E^s(\utau,\ueta)$. We will show that this is also the case for the subspaces $\textrm{Ker }L(d\phi_m)$ with $m\in\cI$, from which the result will follow. Since $\E^s(\utau,\ueta)$ is close to $\E^s(\utau-i\gamma,\ueta)$ for small $\gamma>0$, in accordance with Proposition \ref{thm1}, it will suffice to show for $m\in\cI$ that $i\omega_m(\utau-i\gamma,\ueta)$, close to $i\uomega_m$, satisfies
\begin{align}\label{stabilityIncoming}
\textrm{Re }i\omega_m(\utau-i\gamma,\ueta)<0.
\end{align}
Recalling \eqref{b24aa}, we see that since $\phi_m$ is incoming, i.e. $m\in \cI$, we have
\begin{align}\label{evalderiv}
\partial_{\zeta_0} \omega_m (\utau,\ueta)<0.
\end{align}
Using analyticity of $\omega_m(\zeta)$ with \eqref{evalderiv}, since $\omega_m(\utau,\ueta)$ has imaginary part equal to zero, it follows that $\omega_m(\utau-i\gamma,\ueta)$ has positive imaginary part for small positive $\gamma$. Therefore \eqref{stabilityIncoming} holds. The statement that each of the vector spaces $\textrm{Ker }L(d\phi_m)$ for $m\in\cI$ admits a basis of real vectors follows from Lemma \ref{lem2}, which was proved in \cite{cg}.
\end{proof}

\textbf{The ansatz. }

The following lemma, proved in \cite{cg}, provides a decomposition which is key to the construction of our ansatz and will serve us later in the construction of the projectors which give us the profile equations.
\begin{lem}
\label{lem2}
The space $\bC^N$ admits the decomposition:
\begin{equation}
\label{d}
\C^N = \oplus_{m=1}^M \, \text{\rm Ker } L({ d} \phi_m)
\end{equation}
and each vector space in \eqref{d} with $m\in\cI\cup\cO$ admits a basis of real vectors. If we let $P_1,\dots,P_M$ denote the projectors associated with the
decomposition \eqref{d}, then for all $m=1,\dots,M$, there holds $\text{\rm Im } A_d^{-1}L
({ d} \phi_m) = \text{\rm Ker } P_m$.
\end{lem}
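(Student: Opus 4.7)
The plan is to identify each kernel $\textrm{Ker } L(d\phi_m)$ with an eigenspace of the matrix $\tfrac{1}{i}\cA(\beta)$ from Assumption \ref{assumption4}, at which point both the direct sum decomposition and the projector identity reduce to routine spectral theory for a semisimple operator.

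The first step is the observation that since $d\phi_m=(\beta,\uomega_m)$, we have
\begin{align*}
L(d\phi_m) = \utau I + \sum_{j=1}^{d-1} \ueta_j A_j(0) + \uomega_m A_d(0),
\end{align*}
so left-multiplication by $-A_d^{-1}(0)$ yields $-A_d^{-1}(0)L(d\phi_m) = \tfrac{1}{i}\cA(\beta) - \uomega_m I$. In particular, $\textrm{Ker } L(d\phi_m)$ is exactly the eigenspace of $\tfrac{1}{i}\cA(\beta)$ corresponding to $\uomega_m$. By Assumption \ref{assumption4}, the $\uomega_1,\dots,\uomega_M$ are the distinct semisimple eigenvalues of $\tfrac{1}{i}\cA(\beta)$, so the sum of these eigenspaces is direct and exhausts $\bC^N$. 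This gives \eqref{d}.

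For the reality claim, I would note that the matrix $\tfrac{1}{i}\cA(\beta) = -A_d^{-1}(0)\bigl(\utau I + \sum_j \ueta_j A_j(0)\bigr)$ has real entries (the factor $1/i$ cancels the $-i$ in the definition of $\cA$). For $m\in\cI\cup\cO$ the eigenvalue $\uomega_m$ is real by definition, so $\textrm{Ker}\bigl(\tfrac{1}{i}\cA(\beta)-\uomega_m I\bigr)$ is the complexification of a real kernel and therefore admits a basis of real vectors.

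Finally, for the identity $\textrm{Im } A_d^{-1}L(d\phi_m) = \textrm{Ker } P_m$, the relation $A_d^{-1}L(d\phi_m) = -\bigl(\tfrac{1}{i}\cA(\beta)-\uomega_m I\bigr)$ shows that this operator annihilates $\textrm{Ker } L(d\phi_m)$ and, thanks to semisimplicity of every $\uomega_{m'}$, acts on each complementary eigenspace $\textrm{Ker } L(d\phi_{m'})$ with $m'\neq m$ as the nonzero scalar $-(\uomega_{m'}-\uomega_m)$ times the identity. In particular its restriction to $\oplus_{m'\neq m}\textrm{Ker } L(d\phi_{m'})$ is a bijection of that subspace onto itself, so
\begin{align*}
\textrm{Im } A_d^{-1}L(d\phi_m) \;=\; \bigoplus_{m'\neq m} \textrm{Ker } L(d\phi_{m'}) \;=\; \textrm{Ker } P_m.
\end{align*}
I do not expect a real obstacle here; the one point that actually uses something nontrivial is the reduction to a scalar action on each $\textrm{Ker } L(d\phi_{m'})$, which requires genuine semisimplicity of the $\uomega_{m'}$ rather than merely the distinctness guaranteed by Assumption \ref{assumption4}. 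Everything else is bookkeeping with the definition of $\cA(\beta)$.
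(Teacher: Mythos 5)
Your proof is correct, and since the paper itself omits the argument (deferring to \cite{cg}), your spectral argument is exactly the standard one behind that citation: identifying each $\text{\rm Ker }L(d\phi_m)$ with the $\uomega_m$-eigenspace of the real matrix $\tfrac{1}{i}\cA(\beta)$, using semisimplicity of all the $\uomega_m$ to diagonalize, and reading off the direct sum, the real bases for real $\uomega_m$, and the identity $\text{\rm Im }A_d^{-1}L(d\phi_m)=\oplus_{m'\neq m}\text{\rm Ker }L(d\phi_{m'})=\text{\rm Ker }P_m$. No gaps; the only cosmetic point is that the scalar action on each eigenspace is automatic, and semisimplicity is really used to ensure the eigenspaces exhaust $\C^N$ (which your image computation implicitly relies on).
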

For each $m=1,\ldots,M$, we choose a basis of $\textrm{Ker }L(d\phi_m)$:
\begin{align}
r_{m,k},\,k=1,\ldots,\mu_m,
\end{align}
where for real $\uomega_m$, we take a basis of real $r_{m,k}$. Now we are in a position to give the ansatz for our approximate solution to \eqref{new53}.
\begin{align}\label{new63}
u^a_\eps(x)=\uv(x)+\sum^M_{m=1}\sum^{\mu_m}_{k=1}\sigma_{m,k}\left(x,\frac{\phi_m(x)}{\eps}\right)r_{m,k},
\end{align}
where $\uv(x)$ and the $\sigma_{m,k}(x,\theta_m)$ are $C^1$ functions. Additionally, we require that each of the $\sigma_{m,k}(x,\theta_m)$ is periodic in $\theta_m$ with mean $0$, and we will refer to these as the \emph{periodic profiles}.
With these, we plug \eqref{new63} into \eqref{new53}(a) and expand the error in ascending powers of $\eps$:
\begin{align}\label{new66}
P(\eps u^a_\eps,\partial_x)u^a_\eps-\cF(\eps u^a_\eps)u^a_\eps = \eps^{-1}\left(\sum^M_{m=1}\sum^{\mu_m}_{k=0} L(d\phi_m)\partial_{\theta_m}\sigma_{m,k}\left(x,\frac{\phi_m(x)}{\eps}\right)r_{m,k}\right)+\eps^0\left(\ldots\right)+\ldots.
\end{align}
Observe that since each $r_{m,k}$ belongs to $\textrm{Ker }L(d\phi_m)$, the term of order $\frac{1}{\eps}$ in \eqref{new66} is in fact zero.

Equation \eqref{new63} is a special case of a substitution of the form
\begin{align}\label{new67}
u^a_\eps(x) = \cV^0\left(x,\frac{\phi_1(x)}{\eps},\ldots,\frac{\phi_M(x)}{\eps}\right),
\end{align}
for a function $\cV^0(x,\theta_1,\ldots,\theta_M)$ periodic in $(\theta_1,\ldots,\theta_M)$. One finds that solving \eqref{new53}(a) to order $\frac{1}{\eps}$ amounts to satisfying the condition
\begin{align}\label{new60}
\cL(\partial_\theta)\cV^0 :=\sum^M_{m=1}\tilde{L}(d\phi_m)\partial_{\theta_m} \cV^0= 0,
\end{align}
where $\tilde{L}(d\phi_m)=A^{-1}_d L(d\phi_m)$. Indeed, \eqref{new60} is satisfied if $\cV^0$ has the form
\begin{align}\label{new65}
\cV^0(x,\theta_1,\ldots,\theta_M)=\uv(x)+\sum^M_{m=1}\sum^{\mu_m}_{k=1}\sigma_{m,k}(x,\theta_m)r_{m,k},
\end{align}
which is consistent with the ansatz of \eqref{new63}.

One can try to solve \eqref{new53}(a) to higher order by constructing a \emph{corrected approximate solution} $u^c_\eps$ which, upon replacing $u^a_\eps$ in \eqref{new66}, results in the vanishing of the coefficients of higher powers of $\eps$. In fact, we will obtain more conditions on $\uv$ and the $\sigma_{m,k}$ which will ultimately determine our ansatz for $u^a_\eps$, by considering the existence of such a $u^c_\eps$ which corrects $u^a_\eps$ and agrees in the leading term. Agreement is meant in the sense that, for some $M$-periodic $\cV^1(x,\theta_1,\ldots,\theta_M)$, we have
\begin{align}\label{new12}
u^c_\eps(x)=u^a_\eps(x)+\eps \cV^1\left(x,\frac{\phi_1(x)}{\eps},\ldots,\frac{\phi_M(x)}{\eps}\right)=\cV^0\left(x,\frac{\phi_1(x)}{\eps},\ldots,\frac{\phi_M(x)}{\eps}\right)+\eps \cV^1\left(x,\frac{\phi_1(x)}{\eps},\ldots,\frac{\phi_M(x)}{\eps}\right).
\end{align}
Here $\cV^1$ is not generally expected to share the form of $\cV^0$ seen in \eqref{new65}, as we do not expect $\cV^1$ to solve \eqref{new60}. The function $\cV^1$ is determined by other differential equations arising from setting higher order terms of \eqref{new66} to zero.

\begin{rem}\label{new76}
\textup{
Ensuring the possibility of constructing such $u^c_\eps$, which suggests we have the right leading term $u^a_\eps$, requires \emph{solvability conditions} to hold; these will be translated into conditions known as the \emph{profile equations} on the \emph{profiles}, that is, $\uv(x)$ and the $\sigma_{m,k}(x,\theta_m)$. We will discuss the profile equations in greater depth later.
}
\end{rem}

Expansions with elliptic boundary layers were constructed in \cite{williams1}, treating the semilinear problem for generic $\beta$,\footnote{In fact, \cite{williams1} also treats $\beta$ resulting in glancing modes of order two, which do not result in blow-up, unlike higher-order glancing cases.} and in \cite{lescarret}, where a semilinear dispersive problem with maximally dissipative boundary conditions was considered. In both papers, higher order expansions in ascending powers of $\eps$, such as
\begin{align}
\cV^0\left(x,\frac{\phi_1(x)}{\eps},\ldots,\frac{\phi_M(x)}{\eps}\right)+\ldots+\eps^N \cV^N\left(x,\frac{\phi_1(x)}{\eps},\ldots,\frac{\phi_M(x)}{\eps}\right),
\end{align}
are used to justify the approximate solution as well as construct the exact solution.  High order expansions involving  surface waves, which arise from a failure of the uniform Lopatinski condition at a frequency $\beta$ in the elliptic region, were constructed and justified in \cite{marcou} for quasilinear hyperbolic systems.  Here in the spirit of \cite{cgw} we justify a leading term expansion for solutions to \eqref{new51} oscillating with multiple phases. In this situation it is impossible to construct a high order expansion without a small divisor assumption, an assumption we do not make.  While the small divisor assumption would exclude $\beta$ only from a set of measure zero, verifying it for a given $\beta$ can be difficult if not impossible.  

\subsubsection{The solution of the singular system and the main theorem}
\label{testlabel}

\emph{\quad} The following theorem verifies that the approximate solution is in fact close to the exact solution for small $\eps$.

\begin{theo}\label{maintheo}
Suppose the assumptions hold that $L(\partial_x)$ is hyperbolic with characteristics of constant multiplicity, the boundary $\{x_d=0\}$ is noncharacteristic, $(L(\partial_x),B(0))$ satisfies the uniform stability condition, and $\beta$ is a regular boundary frequency (i.e. Assumptions \ref{assumption1}, \ref{assumption2}, \ref{assumption3}, \ref{assumption4}, resp.)
Then given the exact solution $u_\eps$ of \eqref{new53} defined on a time interval $(-\infty,T]$ independent of $\eps$, for $u^a_\eps$ as in \eqref{new63}, where $\uv$ and the $\sigma_{m,k}$ satisfy\footnote{The profile equations are not solved exactly. Description of the sense in which error terms for the profile equations are to be small can be found in the error analysis. Essentially, the error must satisfy the hypotheses of Proposition \ref{new3}.} the profile equations, defined on a time interval $(-\infty,T']$ independent of $\eps$, we have 
\begin{align}
|u_\eps(x)-u^a_\eps(x)|_{L^\infty((-\infty,T_0]\times \overline{\bR}^d_+)}\rightarrow 0 \textrm{ as } \eps\rightarrow 0,
\end{align}
where $T_0$ is the minimum of $T$ and $T'$.
\end{theo}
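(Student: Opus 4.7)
The strategy is to view both $u_\eps$ and $u^a_\eps$ as (respectively exact and approximate) solutions of the \emph{singular system} of Williams~\cite{williams2}, and then to apply a uniform stability estimate for that system, namely Proposition~\ref{new3}, to control their difference.

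First, I would lift to the singular system. The exact solution $u_\eps(x)$ is obtained from a function $\cU_\eps(x,\theta_0)$ that is periodic in $\theta_0$ and uniformly bounded on a fixed time interval $(-\infty,T]$ by~\cite{williams2}, via $u_\eps(x)=\cU_\eps(x, x'\cdot\beta/\eps)$. The ansatz \eqref{new63} lifts analogously by assigning $\phi_m(x)/\eps = \theta_0 + \uomega_m x_d/\eps$: for $m\in\cI\cup\cO$ this is a standard wavetrain, while for $m\in\cP\cup\cN$ the imaginary part of $\uomega_m$ produces an exponential boundary-layer factor $e^{i\uomega_m x_d/\eps}$ which decays in $x_d/\eps$.

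Second, I would verify that the lifted approximate solution produces a small residual in the singular system. The $\eps^{-1}$ coefficient in \eqref{new66} vanishes automatically because each $r_{m,k}\in\text{Ker } L(d\phi_m)$. The $\eps^{0}$ coefficient decomposes, via the projectors $P_1,\ldots,P_M$ of Lemma~\ref{lem2}, into solvability conditions that are exactly the profile equations plus a remainder lying in $\text{Im } A_d^{-1} L(d\phi_m)$, the latter absorbable into a corrector $\cV^1$ as in \eqref{new12}. Because the profiles $\uv,\sigma_{m,k}$ are assumed to satisfy the profile equations only up to the small error allowed by the hypothesis of Proposition~\ref{new3} (cf.\ the footnote following the theorem statement), the total residual of $u^a_\eps$ in the singular system is small in the norm required by that proposition. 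Subtracting the two singular systems and invoking Proposition~\ref{new3}, whose hypothesis is precisely the uniform stability Assumption~\ref{assumption3} together with a Kreiss-type symmetrizer adapted to the singular setting, converts this small residual and small boundary mismatch into the $L^\infty$ bound $|u_\eps-u^a_\eps|_{L^\infty((-\infty,T_0]\times \overline{\bR}^d_+)}\to 0$ with $T_0=\min(T,T')$.

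The main obstacle is the last step in the presence of elliptic boundary layers. The complex-phase profiles $\sigma_{m,k}$ for $m\in\cP\cup\cN$ contain the exponential factors $e^{i\uomega_m x_d/\eps}$ and are therefore not almost-periodic in $\phi_m/\eps$, so the $L^\infty$ framework of \cite{cgw} cannot be invoked directly. I would handle this by splitting the ansatz into a wavetrain part and a boundary-layer part, equipping the latter with weights matched to the $e^{-\delta x_d/\eps}$ decay with $\delta$ chosen from $\mathrm{Im}\,\uomega_m>0$, and closing the estimates by a simultaneous Picard iteration in which the profile construction and the error equation are updated together. The compatibility of these weights with the boundary condition is exactly what Lemma~\ref{SubspaceDecompLem} guarantees: the subspaces $\text{Ker }L(d\phi_m)$ for $m\in\cP$ sit inside $\E^s(\utau,\ueta)$, so the boundary-layer contributions to $B(0)u^a_\eps|_{x_d=0}$ can be canceled against the incoming-mode contributions via the isomorphism of Definition~\ref{unifStability}, and the residual they leave in the singular system can then be absorbed by the stability estimate of Proposition~\ref{new3}.
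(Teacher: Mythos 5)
Your overall skeleton (lift to the singular system of \cite{williams2}, kill the $\eps^{-1}$ term by polarization, absorb the off-kernel part of the $O(1)$ term into a corrector, then apply a linear estimate) is the right family of ideas, but two of your key steps do not survive contact with the actual structure of the argument. First, you have misassigned the roles of the estimates: Proposition \ref{new3} is \emph{not} a Kreiss-symmetrizer stability estimate for the singular system. It is the statement that a profile-equation error which is elliptically polarized ($\bE_e R=R$) and vanishes at $x_d=0$ tends to zero in $E^{s-1}_T$ after the substitutions $\theta_m=\theta_0+\uomega_m\xi_d$, $\xi_d=x_d/\eps$; its only job is to dispose of the residual $R^{n}$ coming from the fact that the complex transport equations for the elliptic profiles are solved only to first order at the boundary. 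The linear estimate that actually controls the difference of solutions is Proposition \ref{e9} (Cor.\ 7.2 of \cite{williams2}), applied to the \emph{iteration scheme} \eqref{e5}, not to the exact quasilinear problem. So ``subtracting the two singular systems and invoking Proposition \ref{new3}'' is not an available step.

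Second, the direct comparison you propose — plug $u^a_\eps$ into the singular system, call the residual small, and estimate the difference — is exactly the route the paper explains cannot be run as stated. Applying the operator with coefficients $\bA(\eps U^n_\eps)$ to $\cU^{0,n+1}_\eps-U^{n+1}_\eps$ leaves an $O(1)$ error, and removing it requires a corrector $\cV^1$; but without a small-divisor assumption the solvability result (Proposition \ref{new7}) only produces a corrector for \emph{finite trigonometric polynomials}, not for the full profile $\cV^0\in H^{s+1}$ with infinitely many modes, and moreover $\cV^0$ only \emph{approximately} satisfies the profile equations in the elliptic components. The paper's resolution is simultaneous Picard iteration in the precise sense of \eqref{e8}: compare the Williams iterates $U^n_\eps$ with profile iterates $\cU^{0,n}_\eps$ for each fixed $n$, approximate $\cV^{0,n},\cV^{0,n+1}$ by finite polynomials $\cV^{0,n}_p,\cV^{0,n+1}_p$ within $\delta$, build the corrector $\cV^1_p$ for those via Proposition \ref{new7}, control the elliptic residual $R^{n+1}$ via Proposition \ref{new3}, and only then apply Proposition \ref{e9}, using the uniform-in-$\eps$ convergences \eqref{e8}(a),(b) to pass to the limit in $n$. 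Your mention of Picard iteration and of ad hoc exponential weights $e^{-\delta x_d/\eps}$ does not substitute for this: the weights are not needed (the decay is exploited only through Proposition \ref{new3}), while the finite-polynomial approximation and the per-iterate corrector — the steps that make the argument close without a small-divisor hypothesis — are missing from your plan.
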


Observe for this result that we need in particular a positive lower bound on the existence time $T_\eps$ of the exact solution $u_\eps$ to \eqref{new53} which holds uniformly for small $\eps$. That is, for some positive $\eps_0$,
\begin{align}
\inf_{(0,\eps_0]}T_\eps>0.
\end{align}
However, applying the standard theory to the problem \eqref{new53} yields existence times $T_\eps$ which shrink to zero as the Sobolev norms of the initial data blow up in the limit $\eps\rightarrow 0$.
To get estimates uniform in $\eps$, one may use a reformulation of the system \eqref{new53} known as the \emph{singular system}, which is also used in the framework of \cite{cgw}. In fact, to prove the main theorem, we show a stronger result involving the solution of the singular system. The singular system is obtained by recasting \eqref{new53} in terms of an unknown $U_\eps(x,\theta_0)$ periodic in $\theta_0$ such that a solution of \eqref{new53} is to be formed by making the substitution
\begin{align}\label{new56}
u_\eps(x)=U_\eps\left(x,\frac{\beta\cdot x'}{\eps}\right).
\end{align}
The singular system is written in the form
\begin{align}\label{new61}
\begin{split}
&(a)\;\partial_{x_d}U_\epsilon+\sum^{d-1}_{j=0}\tA_j(\epsilon U_\epsilon)\left(\partial_{x_j}+\frac{\beta_j \partial_{\theta_0}}{\epsilon}\right)U_\epsilon=F(\eps U_\epsilon)U_\epsilon,\\
&(b)\;B(\epsilon U_\epsilon)(U_\epsilon)|_{x_d=0}=G(x',\theta_0),\\
&(c)\;U_\epsilon=0 \text{ in } t<0,
\end{split}
\end{align}
where $F=A^{-1}_d \cF$. Indeed, the main benefit of expressing the problem in terms of the new unknown $U_\eps$ is that it is possible to obtain estimates uniform in $\eps$ for $U_\eps$ and thus existence of the solution $u_\eps$ to $\eqref{new53}$ in a fixed time interval independent of $\eps$, demonstrated in \cite{williams2} by M. Williams, where he constructed the exact solution and proved these estimates which are crucial to our analysis as well as the analysis of \cite{cgw}. The solution $U_\eps$ of the singular system is constructed in Theorem 7.1 of \cite{williams2}, with the use of the following iteration scheme:
\begin{align}\label{e5}
\begin{split}
&a)\;\partial_{x_d}U^{n+1}_\epsilon+\sum^{d-1}_{j=0}\tilde{A}_j(\eps U^n_\eps)\left(\partial_{x_j}+\frac{\beta_j \partial_{\theta_0}}{\epsilon}\right)U^{n+1}_\epsilon=F(\eps U^n_\epsilon)U^n_\epsilon,\\
&b)\;B(\epsilon U^{n}_\epsilon)(U^{n+1}_\epsilon)|_{x_d=0}=G(x',\theta_0),\\
&c)\;U^{n+1}_\epsilon=0 \text{ in } t<0.
\end{split}
\end{align}
Once the $U^{n}_\epsilon$ are obtained, $U_\eps$ is found by taking the limit as $n\to\infty$ of the $U^{n}_\epsilon$.

The stronger result we will show in order to get Theorem \ref{maintheo} is the following:
\begin{theo}\label{maintheo2}
Define
\begin{align}\label{new69}
\cU^0_\eps(x,\theta_0)=\cV^0\left(x,\theta_0+\uomega_1\frac{x_d}{\eps},\ldots,\theta_0+\uomega_M\frac{ x_d}{\eps}\right)
\end{align}
where $\cV^0$ is as in \eqref{new65}. If $G(x',\theta_0)\in H^{s+1}_T$ for sufficiently large $s$, then for the exact solution $U_\eps$ of $\eqref{new56}$, we have
\emph{
\begin{align}
|U_\eps(x,\theta_0)-\cU^0_\eps(x,\theta_0)|_{L^\infty(x_d,H^{s-1}(x',\theta_0))}\rightarrow 0 \textrm{ as } \eps\rightarrow 0.
\end{align}
}

\end{theo}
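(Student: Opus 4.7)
The proof of Theorem \ref{maintheo2} will be structured around the method of simultaneous Picard iteration, leveraging the uniform linear estimates of M.~Williams \cite{williams2} that underlie the existence of the exact solution $U_\eps$ on a fixed time interval.

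First I would complete the construction of an approximate solution of the singular system to next-to-leading order. Using the decomposition of Lemma \ref{lem2} and the projectors $P_m$, one derives the \emph{profile equations} that $\uv$ and the periodic profiles $\sigma_{m,k}$ must satisfy in order that the $\eps^0$--order contribution produced by substituting $\cU^0_\eps$ into \eqref{new61} lie in the range of $\cL(\partial_\theta)$. For $m\in\cI\cup\cO$ these amount to real Burgers--type transport equations along the group velocities ${\bf v}_m$, while for $m\in\cP\cup\cN$ they are complex transport equations along $X_{\phi_m}$ whose $x_d$--direction carries the imaginary part of $\uomega_m$ and thereby produces the exponential decay (or growth) of the Fourier modes of the boundary layer profiles. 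Once the profile equations are solved, inversion of $\cL(\partial_\theta)$ on the remaining range components yields an $\eps$--order corrector $\cU^1_\eps(x,\theta_0)$ such that $U^{a}_\eps := \cU^0_\eps+\eps\,\cU^1_\eps$ satisfies both equations of \eqref{new61} with residuals that tend to zero in the $L^\infty(x_d;H^{s-1}(x',\theta_0))$ norm as $\eps\to 0$.

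Next I would run the Picard iteration \eqref{e5} for the exact solution $U_\eps$ in parallel with the analogous iteration $(U^{a,n}_\eps)$ in which the approximate solution plays the role of the coefficient; the tame linear estimates of \cite{williams2} give uniform bounds on both sequences in the singular Sobolev spaces used by Williams. Setting $W^n_\eps := U^n_\eps - U^{a,n}_\eps$, one obtains a \emph{linear} singular boundary value problem for $W^{n+1}_\eps$ whose source splits into a piece controlled by the residual of $U^a_\eps$ in the singular system, plus nonlinear commutator pieces carrying an extra factor of $\eps$. The linear singular estimate (the analogue of Proposition \ref{new3}) then yields an inequality of the form $\|W^{n+1}_\eps\|\lesssim o_\eps(1) + C\eps\,\|W^n_\eps\|$ uniformly in $n$, whence $\sup_n\|W^n_\eps\|=o_\eps(1)$ by a short induction. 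Passing to the limit $n\to\infty$, in which $U^n_\eps\to U_\eps$ by Williams and $U^{a,n}_\eps\to U^a_\eps$ by construction, yields $\|U_\eps - U^a_\eps\|_{L^\infty(x_d;H^{s-1})}=o_\eps(1)$; since $\|\eps\,\cU^1_\eps\|$ is trivially of order $\eps$ in the same norm, the theorem follows.

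The main obstacle I anticipate is in the corrector step: showing that the residual left by $U^a_\eps$ in \eqref{new61} is genuinely $o_\eps(1)$ in the $L^\infty(x_d;H^{s-1})$ norm rather than merely $O(1)$. Because the nonreal phases generate boundary layer profiles that decay exponentially in $x_d/\eps$, the nonlinear expansion produces interaction terms that are products of oscillations with exponentially decaying boundary layers and which resonate whenever a linear combination $\sum_j\alpha_j\uomega_{m_j}$ coincides with some $\uomega_m$. Unlike \cite{cgw}, these interactions cannot be absorbed into an almost--periodic framework, so I would classify all such resonances by direct inspection of the $\uomega_m$, verify that the projectors $P_m$ annihilate the resonant pieces once the profile equations are imposed, and sweep the non--resonant pieces into $\cU^1_\eps$ by inverting $\cL(\partial_\theta)$. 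Carrying out this bookkeeping is the technical heart of the argument; once it is done, the remaining pieces are of size $\eps$ times quantities bounded uniformly in the $L^\infty(x_d;H^{s-1})$ norm, and the linear simultaneous--iteration argument sketched above closes the proof.
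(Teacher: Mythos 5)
Your overall strategy (simultaneous Picard iteration plus Williams' linear singular estimates) is the right one, but two steps in your outline do not survive contact with the actual difficulties this paper is designed to handle. First, you assume that "once the profile equations are solved," inverting $\cL(\partial_\theta)$ produces a corrector $\cU^1_\eps$ such that $\cU^0_\eps+\eps\,\cU^1_\eps$ solves \eqref{new61} up to an $o_\eps(1)$ residual. Neither half of that is available here. The elliptic (complex) transport equations for the profiles $\sigma_{m,k}$, $m\in\cP\cup\cN$, are in general not exactly solvable; the paper only imposes them to first order at $x_d=0$ and then controls the resulting error, which is elliptically polarized ($\bE_e R=R$) and vanishes on the boundary, via Proposition \ref{new3}. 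Moreover, without a small-divisor assumption the operator $\cL(\partial_\theta)$ can only be inverted on \emph{finite} trigonometric polynomials (Proposition \ref{new7}); on profiles with infinitely many Fourier modes the non-characteristic determinants $\det L(\sum_m\alpha_m\beta,\alpha\cdot\omega)$ may be arbitrarily small, and the inverse is unbounded. The paper therefore approximates the iterates by finite polynomials $\cV^{0,n}_p$ within $\delta$, builds the corrector $\cV^1_p$ only for those, and closes with a three-parameter limit ($\delta$, $\eps$, $n$); your proposal has no mechanism for this.

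Second, your claimed recursion $\|W^{n+1}_\eps\|\lesssim o_\eps(1)+C\eps\|W^n_\eps\|$, uniform in $n$, is not correct as stated and is also not needed. The coefficient difference $\tilde A_j(\eps U^n_\eps)-\tilde A_j(\eps U^{a,n}_\eps)=O(\eps\|W^n_\eps\|)$ multiplies the singular derivative $\frac{\beta_j\partial_{\theta_0}}{\eps}U^{n+1}_\eps$, so the factor $\eps$ cancels and one only obtains $C\|W^n_\eps\|$ (smallness in the linear estimate \eqref{e10} comes from $\sqrt T$, not from $\eps$); and the $o_\eps(1)$ source term would have to be uniform in $n$, which again presupposes the unavailable exact corrector. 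The paper's proof (Theorem \ref{e1}) deliberately avoids any uniformity in $n$ of the comparison step: it proves (a) $U^n_\eps\to U_\eps$ and (b) $\cU^{0,n}_\eps\to\cU^0_\eps$ in $E^{s-1}_{T_0}$ \emph{uniformly in} $\eps$, and then, for each \emph{fixed} $n$, shows by induction that $|U^n_\eps-\cU^{0,n}_\eps|_{E^{s-1}_{T_0}}\to 0$ as $\eps\to0$, using the $\delta$-truncation, the corrector from Proposition \ref{new7}, Proposition \ref{new3} for the boundary-vanishing elliptic residual $R^{n+1}$, and the estimate of Proposition \ref{e9}. Unless you can supply a genuinely different argument for a uniform-in-$n$ contraction together with an exact $o_\eps(1)$ corrector (which would amount to assuming a small-divisor condition and exact solvability of the complex transport equations), your scheme does not close, whereas the paper's three-limit structure does.
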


To handle the supremum norms taken over $x_d\in[0,\infty)$ and to accommodate the appearances of $\frac{x_d}{\eps}$ in the $M$ periodic arguments of \eqref{new69}, we introduce the placeholder $\xi_d=\frac{x_d}{\eps}$. Thus, we consider a function
\begin{align}\label{new70}
\cU^0(x,\theta_0,\xi_d):=\cV^0(x,\theta_0+\uomega_1 \xi_d,\ldots, \theta_0+\uomega_M \xi_d)=\uv(x)+\sum^M_{m=1}\sum^{\mu_m}_{k=1}\sigma_{m,k}(x,\theta_0+\uomega_m \xi_d)r_{m,k},
\end{align}
which we may also write as
\begin{align}\label{new16}
\cU^0(x,\theta_0,\xi_d)=\uv(x)+\sum^M_{m=1}\sum^{\mu_m}_{k=1}\psi_{m,k}(x,\theta_0,\xi_d)r_{m,k},
\end{align}
where profiles $\psi_{m,k}(x,\theta_0,\xi_d)$ are given in terms of the \emph{periodic profiles} $\sigma_{m,k}(x,\theta_m)$ by
\begin{align}\label{new21}
\psi_{m,k}(x,\theta_0,\xi_d):=\sigma_{m,k}(x,\theta_0+\uomega_m \xi_d).
\end{align}
As a result, assertions such as Lemma \ref{e11} and Proposition \ref{new3}, which concern substitutions of the form
\begin{align}
\cU^0_\epsilon(x,\theta_0)=\cU^0(x,\theta_0,\xi_d)|_{\xi_d=\frac{x_d}{\eps}},
\end{align}
play an important role in the error analysis.
\begin{rem}\label{new54}
\textup{
When $\uomega_m$ is real, since $\sigma_{m,k}(x,\theta_m)$ is periodic in $\theta_m$, the profile $\psi_{m,k}(x,\theta_0,\xi_d)$ defined in \eqref{new21} is almost-periodic in $(\theta_0,\xi_d)$. Furthermore, if $\beta$ is in the \emph{hyperbolic region} defined in Remark \ref{modeProps}(iii), all the $\uomega_m$ are real, and so in that case $\cU^0(x,\theta_0,\xi_d)$ is almost-periodic in $(\theta_0,\xi_d)$. In the case that some of the $\uomega_m$ are nonreal, in order to make sense of the substitutions \eqref{new21} we must first extend the $\sigma_{m,k}(x,\theta_m)$ to be defined for $\theta_m$ in the complex plane. This process is detailed in Section \ref{RoleOfNonreality}.
}
\end{rem}

\subsection{Role of nonreal phases and the resulting boundary layers}

\emph{\quad}We now point out a key feature distinguishing our study from that in \cite{cgw}. In \cite{cgw} it is assumed that $\beta$ belongs to the \emph{hyperbolic region}. This is equivalent to requiring that both (i) $\beta$ is a \emph{regular boundary frequency} (i.e. $\beta$ satisfies Assumption \ref{assumption4}) and (ii) all the eigenvalues $\uomega_m$ of the matrix $\frac{1}{i}\cA(\beta)$ are \emph{real}. In our study, we allow for any regular boundary frequency $\beta$ and must thus handle cases in which some of the $\uomega_m$ are nonreal. Hence, functions of complex variables must be considered in, for example, \eqref{new63}, \eqref{new65}, \eqref{new69}, and \eqref{new70}.

\subsubsection{Hyperbolic and elliptic profiles, and the emergence of the elliptic boundary layer}\label{RoleOfNonreality}

\emph{\quad} For each $m$ such that $\uomega_m$ is real, we call the $\sigma_{m,k}(x,\theta_m)$, $k=1,\ldots,\mu_m$, \emph{hyperbolic profiles}, and we note that in order to make the substitution $\theta_m=\theta_0+\uomega_m \xi_d$, such as those made in \eqref{new70}, a hyperbolic profile needs only to be defined for real $\theta_m$. On the other hand, when $\uomega_m$ is nonreal, we call the $\sigma_{m,k}(x,\theta_m)$ \emph{elliptic profiles}. For each elliptic profile, as we vary the parameters $\theta_0\in\bR$, $\xi_d\geq 0$, the value $\theta_m=\theta_0+\uomega_m \xi_d$ varies throughout one of the half complex planes $\{\textrm{Im } \theta_m\geq 0\}$, $\{\textrm{Im } \theta_m\leq 0\}$, depending on the sign of $\textrm{Im }\uomega_m$. To make sense of substituting $\theta_m=\theta_0+\uomega_m \xi_d$ into the argument of an elliptic profile, we define the profile first for real $\theta_m$ and then holomorphically extend the $\theta_m$-dependence into the appropriate half complex plane. To do so, we make use of the Fourier expansions of the profiles.

Consider a profile $\sigma_{m,k}(x,\theta_m)$, periodic in $\theta_m$ with mean $0$, and its expansion of the form
\begin{align}\label{new71}
\sigma_{m,k}(x,\theta_m)=\sum_{j\in \bZ\setminus 0}a_j(x)e^{ij\theta_m}.
\end{align}
For the moment we proceed formally, assuming that one can evaluate the above expression at $\theta_m=\theta_0+\uomega_m \xi_d$ by substituting $\theta_m=\theta_0+\uomega_m \xi_d$ in the argument of each exponential in the expansion for $\sigma_{m,k}(x,\theta_m)$ and yield a convergent expansion for the profile $\psi_{m,k}(x,\theta_0,\xi_d)$ as defined in \eqref{new21}. That is, given \eqref{new71}, we also have
\begin{align}\label{new31}
\psi_{m,k}(x,\theta_0, \xi_d)=\sum_{j\in \bZ\setminus 0}a_j(x)e^{ij(\theta_0+\uomega_m \xi_d)}.
\end{align}
We remark that, certainly, we have yet to make sense of this sum in the case that $\uomega_m\in\C\setminus\R$, but also that the space of convergence must be clarified  when $\uomega_m\in\R$. The convergence is made rigorous for both cases with Proposition \ref{propA}. Let us rewrite such an expansion in the following form:
\begin{align}\label{new72}
\psi_{m,k}(x,\theta_0,\xi_d)=\sigma_{m,k}(x,\theta_0+\uomega_m \xi_d)=\sum_{j\in \bZ\setminus 0}a_j(x)e^{ij\theta_0}e^{ij\textrm{Re}(\uomega_m) \xi_d}e^{-j\textrm{Im}(\uomega_m) \xi_d}.
\end{align}

Suppose $\sigma_{m,k}(x,\theta_m)$ is elliptic. We first consider the case with $\textrm{Im }\uomega_m > 0$. Then, the terms in the sum in \eqref{new72} with $j<0$ grow exponentially with $\xi_d$. It is easy to check that such terms result in an unsatisfactory candidate for our approximate solution $u^a_\eps(x)$, as defined in $\eqref{new63}$, which is nonphysical in the sense that it blows up in $L^\infty$ as $\eps\rightarrow 0 $. This leads us to construct $\sigma_{m,k}(x,\theta_m)$ such that $a_j(x)=0$ for $j<0$, and so an elliptic profile with $\textrm{Im }\uomega_m > 0$ is to have in fact the following expansion rather than the one in \eqref{new71}:
\begin{align}
\sigma_{m,k}(x,\theta_m)=\sum_{j\in \bZ^+\setminus 0}a_j(x)e^{ij\theta_m},\quad \textrm{for Im }\uomega_m>0.
\end{align}
Provided the above is a Fourier series in real $\theta_m$ converging in $H^{\frac{d}{2}+3}(x,\theta_m)$, one can show that it holomorphically extends into $\{\textrm{Im }\theta_m\geq 0\}$. Now, for the corresponding profile $\psi_{m,k}(x,\theta_0,\xi_d)=\sigma_{m,k}(x,\theta_0+\uomega_m \xi_d)$, we take \eqref{new31} and \eqref{new72}, apply the fact that $a_j(x)=0$ for $j<0$, and get
\begin{align}
\psi_{m,k}(x,\theta_0,\xi_d)=\sum_{j\in \bZ^+\setminus 0}a_j(x)e^{ij(\theta_0+\uomega_m \xi_d)}=\sum_{j\in \bZ^+\setminus 0}a_j(x)e^{ij\theta_0}e^{ij\textrm{Re}(\uomega_m) \xi_d}e^{-j\textrm{Im}(\uomega_m) \xi_d},\quad \textrm{for Im }\uomega_m>0.
\end{align}
As a result, the profile $\psi_{m,k}(x,\theta_0,\xi_d)$ must decay exponentially in $\xi_d$.

In the case that $\textrm{Im }\uomega_m < 0$, similar considerations lead us to construct a corresponding elliptic profile of the form
\begin{align}
\sigma_{m,k}(x,\theta_m)=\sum_{j\in \bZ^-\setminus 0}a_j(x)e^{ij\theta_m},\quad \textrm{for Im }\uomega_m<0,
\end{align}
a sum which holomorphically extends into $\{\textrm{Im }\theta_m\leq 0\}$. This kind of elliptic profile also results in $\psi_{m,k}(x,\theta_0,\xi_d)=\sigma_{m,k}(x,\theta_0+\uomega_m \xi_d)$ which decays exponentially as $\xi_d$ increases.

For a hyperbolic profile $\sigma_{m,k}(x,\theta_m)$, which has $\textrm{Im }\uomega_m = 0$, we do not make such restrictions on its coefficients $a_j(x)$, so we describe it with \eqref{new71} and $\psi_{m,k}(x,\theta_0,\xi_d)$ with the expansions \eqref{new31} and \eqref{new72}.

With the following remark, we summarize the different forms of expansions for profiles.
\begin{rem}
\textup{
For $Z_m=\{n\in\Z: n \textrm{Im}\uomega_m\geq0\}$, i.e.
\begin{align}
Z_m=\left\{ 
\begin{matrix}
  \mathbb{Z} \textrm{ if } \uomega_m\in\R, \\
  \mathbb{Z}^+ \textrm{ if } \textrm{Im }\uomega_m>0, \\
  \mathbb{Z}^- \textrm{ if } \textrm{Im }\uomega_m<0,
 \end{matrix}
\right.
\end{align}
each periodic profile has an expansion of the form
\begin{align}
\sigma_{m,k}(x,\theta_m)=\sum_{j\in Z_m\setminus 0}a_j(x)e^{ij\theta_m},
\end{align}
and each profile $\psi_{m,k}(x,\theta_0,\xi_d)$ has
\begin{align}
\psi_{m,k}(x,\theta_0,\xi_d)=\sum_{j\in Z_m\setminus 0}a_j(x)e^{ij(\theta_0+\uomega_m \xi_d)}=\sum_{j\in Z_m\setminus 0}a_j(x)e^{ij\theta_0}e^{ij\textrm{Re}(\uomega_m) \xi_d}e^{-j\textrm{Im}(\uomega_m) \xi_d}.
\end{align}
}
\end{rem}

Since the elliptic profiles $\sigma_{m,k}(x,\theta_m)$ result in $\psi_{m,k}(x,\theta_0,\xi_d)$ which decay in $\xi_d$, upon replacing the placeholder $\xi_d$ with $\frac{x_d}{\eps}$ we see they contribute to a boundary layer with rapid exponential decay in $x_d$ for small $\eps$, i.e. the \emph{elliptic boundary layer}, appearing in the approximate solution described by \eqref{new63}.

It appears that this paper is the first to rigorously justify leading order expansions involving multiple real and nonreal phase functions $\phi_m(x)$, and thus both hyperbolic and elliptic profiles, in quasilinear hyperbolic boundary problems.

\subsubsection{Loss of almost-periodicity and construction of the corrector}

\emph{\quad} In \cite{cgw}, the authors constructed leading order expansions similar to ours, though aspects of the analysis do not work with nonreal phases. As discussed in Remark \ref{new54}, in the case studied in \cite{cgw}, all the $\uomega_m$ are real, a condition which generally yields $\cU^0(x,\theta_0,\xi_d)$ which is almost-periodic in $(\theta_0,\xi_d)$. Thus, in that study, rather than having to define a kind of convergence of infinite sums of the form \eqref{new72}, since the functions of interest were all almost-periodic, it sufficed to work in the space $\cP^s_T$, defined by taking the closure of the set of finite trigonometric polynomials in $\cE^s_T$, defined by
\begin{align}
\cE^s_T=\{\cU(x,\theta_0,\xi_d):\sup_{\xi_d\geq 0}|\cU(\cdot,\cdot,\xi_d)|_{E^s_T} < \infty\},
\end{align}
where
\begin{align}\label{firstDefESpace}
E^s_T=C(x_d,H^s_T(x',\theta_0))\cap L^2(x_d,H^{s+1}_T(x',\theta_0)).
\end{align}
However, in the case that some of the $\uomega_m$ are nonreal, one loses the almost-periodicity of $\cU^0(x,\theta_0,\xi_d)$. The introduction of exponential decay in $\xi_d$ of the corresponding $\psi_{m,k}(x,\theta_0,\xi_d)$ presents serious obstacles to straightforwardly adopting the approach of \cite{cgw}. Many estimates proved in \cite{cgw} appear to have no analogue in our study.

As an example, we consider the construction of a projector used in \cite{cgw}, which we denote by $\bP$. The operator $\bP$ is first defined on finite polynomials of the form
\begin{align}\label{new50}
H(x,\theta_0,\xi_d)=\sum_{\kappa=(\kappa_0,\kappa_d)\in \bZ\times \bR} H_\kappa (x)e^{i\kappa_0 \theta_0+i \kappa_d \xi_d},
\end{align}
and is constructed such that $(\bP H)(x,\theta_0,\xi_d)=0$ if and only if there exists a solution $\cU$ of
\begin{align}
\cL'(\partial_{\theta_0},\partial_{\xi_d})\cU=H(x,\theta_0,\xi_d),
\end{align}
where $\cL'(\partial_{\theta_0},\partial_{\xi_d})=\cA(\beta)\partial_{\theta_0}+\partial_{\xi_d}$. Now suppose that, in an attempt to account for the existence of nonreal eigenvalues $\uomega_m$ of $\frac{1}{i}\cA(\beta)$, we take $H$ to be a finite polynomial of the form
\begin{align}\label{new49}
H(x,\theta_0,\xi_d)=\sum_{\kappa=(\kappa_0,\kappa_d)\in \bZ\times \{z\in\C:\textrm{Im }z\geq 0\}} H_\kappa (x)e^{i\kappa_0 \theta_0+i \kappa_d \xi_d},
\end{align}
noting the decay of $H$ in $\xi_d$. Given the existence of more than two nonreal eigenvalues of $\frac{1}{i}\cA(\beta)$, we found that a linear operator $\bP$ satisfying the same for such $H$ in any sensible function space\footnote{The nonlinearity of \eqref{new51} forces us to consider the action of $\cL'(\partial_{\theta_0},\partial_{\xi_d})$ (and thus $\bP$) on vectors such as $r e^{i\kappa_d\xi_d}\in \bR^N$ for all $\kappa_d$ in some set closed under $\Z$-linear combinations; in the event that more than two elliptic modes exist which are pairwise independent over $\bQ$, it follows there exists a sequence $r e^{i\kappa^n_d\xi_d}$ in the kernel of $\bP$ converging in $L^\infty$ to an element not in the kernel.} \emph{typically is unbounded}. On the other hand, the assumptions in \cite{cgw} lead to having no such eigenvalues, and in that case the projector $\bP$ can indeed be continuously extended to $\cP^s_T$. In fact, $\bP:\cP^s_T\rightarrow \cP^s_T$ is used in \cite{cgw} to obtain a corrector term for the expansion which is crucial to the error analysis of \cite{cgw}. We also need a corrector but we construct it in a different way.

To obtain a satisfactory corrector term, we found there was no need to define a projector on the space of almost-periodic profiles, such as $\bP$. In fact, we were able to dispense entirely with almost-periodic formulation of the profile equations, such as the one used in \cite{cgw}. Instead, it was sufficient to solve profile equations and use projectors in just the space for \emph{$M$-periodic} functions, which contains the $\cV^0(x,\theta_1,\ldots,\theta_M)$ and $\cV^1(x,\theta_1,\ldots,\theta_M)$ featured in \eqref{new65} and \eqref{new12}, namely $H^s(\overline{\bR}^{d+1}_+\times \bT^M)$, and therein we found we could naturally phrase much of the error analysis and construct a useful corrector term. In much of the error analysis, in place of the almost-periodic profiles with expansions in terms similar to \eqref{new50}, we deal with corresponding Fourier expansions in terms $e^{i\alpha\cdot\theta}$, $\alpha\in Z^M$ (see Definition \ref{indexdefn},) $\theta=(\theta_1,\ldots,\theta_M)$, holomorphically extended in $\theta$ to a selected subset $\bold{C}^M$ of $\C^M$ (see Remark \ref{new28},) circumventing the use of almost-periodicity of \cite{cgw}. The key observation was that for the construction of an appropriate correcter in $H^s(\overline{\bR}^{d+1}_+\times \bT^M)$, hyperbolic objects as well as elliptic objects could be handled simultaneously, as the terms $e^{i\alpha\cdot\theta}$ behave no worse for $\theta\in \bold{C}^M\subset \bC^M$ than they do at $\theta\in\bR^M$.

Interestingly, working with projectors only in $H^s(\overline{\bR}^{d+1}_+\times \bT^M)$ to obtain the corrector appeared to be significantly simpler than attempting to do so in a space similar to the one used in \cite{cgw}. For example, the continuity of our projectors is almost immediate,\footnote{See $\bE$ and $\bE^\flat$ appearing in Definition \ref{new47} and Remark \ref{new46}.} while even in the case treated in \cite{cgw}, the proof of continuity of the projector $\bP$ takes some effort. The trade-off for the gained simplicity is that we must describe convergence of expansions such as \eqref{new72} carefully with Proposition \ref{propA}, instead of just using a space which is a closure of trigonometric polynomials and assuming all the functions needed are almost-periodic. Finally, while we do as much of the error analysis in $H^s(\overline{\bR}^{d+1}_+\times \bT^M)$ as we can, we are forced to introduce the substitutions $\theta=(\theta_0+\uomega_1\xi_d,\ldots,\theta_0+\uomega_M\xi_d)$ and $\xi_d=\frac{x_d}{\eps}$ to use \cite{williams2} for the error analysis, with Proposition \ref{e9}.

\subsubsection{Complex transport equations and resonances}

\emph{\quad}In contrast with the real transport equations of the profile equations for the hyperbolic profiles, the complex transport equations for the elliptic profiles are generally not solvable. In \cite{williams1}, a Taylor expansion in $x_d$ is developed to approximately solve the corresponding complex transport equations of the profile equations, but using the usual Taylor error bounds would require us to consider $C^k$ regularity in place of $H^s$ regularity, presenting difficulties to our strategy for leading-order justification. Interestingly, however, in our proof of Theorem \ref{e1} we were able to show that merely requiring the complex transport equations of the profile equations to hold to first order at $x_d=0$ is sufficient for achieving $L^\infty$ convergence. The error from the complex transport equations belongs to a class of functions in $H^s_T(\overline{\R}^{d+1}_+,\bT^M)$ which are \emph{elliptically polarized}.\footnote{These functions satisfy $\bE_e \cV=\cV$, where $\bE_e$ is the \emph{elliptic projector} defined in \eqref{new14}.} This error term is thus handled by an application of Proposition \ref{new3}, which clarifies the sense in which such functions are small if they are zero at the boundary $\{x_d=0\}$.

Another unusual feature involving elliptic phases in the analysis is the handling of complex resonances, which result in couplings amongst the transport equations for the profiles. Careful examination of these resonances revealed that we could first work strictly within a subsystem of real transport equations \emph{not dependent on the elliptic profiles} and solve for the hyperbolic profiles. On the other hand, resonances between hyperbolic and elliptic phases do affect the complex transport equations for the elliptic profiles.

While, as in \cite{cgw}, hyperbolic profiles $\sigma_{m,k}$ are obtained as limits of $\sigma^n_{m,k}$ via an iteration scheme, there is no need to develop such a scheme to obtain the elliptic profiles. We construct satisfactory elliptic $\sigma_{m,k}$ by using finite propagation and regularity properties resulting from solving wave equations where time is thought of as the $x_d$ variable rather than the $t$ variable and using a trick of taking a frame of reference moving at the speed of propagation to ensure the $\sigma_{m,k}$ are supported in $t\geq 0$. The conditions which the elliptic profiles must satisfy at $x_d=0$ are straightforwardly read off from linear relations with the hyperbolic profiles at $x_d=0$ and $G$. Inserting the values of the profiles at $x_d=0$ into the complex transport equation evaluated on the boundary and requiring this to hold determines the values of the $x_d$-derivatives of the elliptic profiles on the boundary\footnote{For further discussion, see Section \ref{largeappsolution}.}. However, we still develop a sequence of elliptic `iterates' $\sigma^n_{m,k}$; we emphasize that while they do converge to the elliptic $\sigma_{m,k}$, these elliptic iterates are not used in the construction of the elliptic $\sigma_{m,k}$. The elliptic iterates are useful because they are convenient for the error analysis, allowing us to perform much of the analysis on the elliptic and hyperbolic parts simultaneously and in as uniform a way as possible.

\subsection{Use of simultaneous Picard iteration in the error analysis}

\emph{\quad}In the proof of Theorem 7.1 of \cite{williams2}, for some $T_0>0$, the iteration scheme for the singular system, \eqref{e5}, is used to produce $U_\eps(x,\theta_0)$ and iterates $U^n_\eps(x,\theta_0)$ bounded in $E^s_{T_0}$ uniformly with respect to $n$ and $\eps$ and which satisfy
\begin{align}\label{new73}
\lim_{n\rightarrow\infty} U^n_\eps = U_\eps \textrm{ in } E^{s-1}_{T_0} \textrm{ uniformly with respect to } \eps\in (0,\eps_0],
\end{align}
where $U_\eps$ solves the singular system \eqref{new61}.

In Section \ref{largehypsolution}, Proposition \ref{r2} of Section \ref{largeappsolution}, and Definition \ref{ansatzDefn} we construct a function $\cV^0(x,\theta)\in H^{s}(\overline{\bR}^{d+1}_+\times\mathbb{T}^M)$ which approximately satisfies the profile equations, and iterates $\cV^{0,n}(x,\theta)$ approximately satisfying the equations of a corresponding iteration scheme. The iterates $\cV^{0,n}$ are bounded in $\bH^{s+1}_{T_0}$ uniformly with respect to $n$ and satisfy
\begin{align}\label{new74}
\lim_{n\rightarrow\infty} \cV^{0,n} = \cV^0 \textrm{ in } \bH^{s}_{T_0}.
\end{align}
Again, we make the substitution seen in \eqref{new70}, plugging in $\theta=(\theta_0+\uomega_1 \xi_d,\ldots, \theta_0+\uomega_M \xi_d)$, to get
\begin{align}
\cU^{0,n}(x,\theta_0,\xi_d):=\cV^{0,n}(x,\theta_0+\uomega_1 \xi_d,\ldots, \theta_0+\uomega_M \xi_d),
\end{align}
followed by $\xi_d=\frac{x_d}{\eps}$, yielding
\begin{align}
\cU^{0,n}_\eps(x,\theta_0):=\cV^{0,n}\left(x,\theta_0+\uomega_1 \frac{x_d}{\eps},\ldots, \theta_0+\uomega_M \frac{x_d}{\eps}\right).
\end{align}
Thus, by using \eqref{new74} and applying the estimates given by Proposition \ref{propA} and Lemma \ref{e11}, we get that
\begin{align}
\lim_{n\to\infty}\cU^{0,n}_{\eps}= \cU^0_{\eps}\text{ in }   E^{s-1}_{T_0} \text{  uniformly  with respect to }\eps\in (0,\eps_0],
\end{align}
where $\cU^0_\eps$ is as in Theorem \ref{maintheo2}. Therefore, to conclude $\lim_{\eps\to 0}\cU^0_\eps(x,\theta_0)-U_\eps(x,\theta_0)=0$ in $E^{s-1}_{T_0}$ and finish the proof of Theorem \ref{maintheo2}, it is sufficient to show
\begin{align}\label{new75}
\lim_{\eps\to 0}|\cU^{0,n}_\eps-U^n_\eps|_{E^{s-1}_{T_0}}=0 \textrm{ for all } n.
\end{align}

Indeed, \eqref{new75} is proved in Section \ref{erroranalysis} by induction on $n$. One might try to prove the statement in this way by applying the estimate for the linearized singular system of Proposition \ref{e9} to $\left(\cU^{0,n+1}_\eps-U^{n+1}_\eps\right)$. The problem with this is that if we take $\bA(\eps U^n_\eps)$, which we define to be the operator appearing in the left hand side of the equation for $U^{n+1}_\eps$, i.e. \eqref{e5}(a), and apply it to the difference $\left(\cU^{0,n+1}_\eps-U^{n+1}_\eps\right)$, the resulting quantity does not necessarily converge to zero as $\eps\to 0$. To illustrate this point, consider the relation of $\cU^0_\eps$ to $\cV^0$. Recall that $\cV^0$ is constructed\footnote{Strictly speaking, details on the construction of $\cV^0$ come later in the paper with the solution of the profile equations. Instead, this refers to imposing the condition that $\cV^0$ has the form \eqref{new65}.} to satisfy $\eqref{new60}$ so that in \eqref{new66} the order $\frac{1}{\eps}$ terms are annihilated, noting that the terms of orders $\eps$, $\eps^2$, $\ldots$ shrink to zero with $\eps$, but observe that we have yet to handle the $O(1)$ terms. Similarly, plugging in $\cU^{0,n+1}_\eps$, an approximation for $\cU^0_\eps$, in place of $U^{n+1}_\eps$ in the equation for $U^{n+1}_\eps$ results in an $O(1)$ error.

This could be handled if a corrector for $\cV^{0,n+1}$ could be constructed, say $\cV^1$, analogous to the $\cV^1$ appearing in \eqref{new12}, which resulted in the elimination of the $O(1)$ terms upon the replacement of $\cV^{0,n+1}$ by $\cV^{0,n+1}+\eps \cV^1$ in a similar equation. Then one obtains a corrector for $\cU^{0,n+1}_\eps$, the function $\cU^1_\eps(x,\theta_0)$, from $\cV^1(x,\theta)$ by plugging in $\theta=(\theta_0+\uomega_1 \xi_d,\ldots,\theta_0+\uomega_M\xi_d)$ and $\xi_d=\frac{x_d}{\eps}$ as before. In fact, applying $\bA(\eps \cU^{0,n}_\eps)$ to $\left(\cU^{0,n+1}_\eps+\eps \cU^1_\eps-U^{n+1}_\eps\right)$ results in an error which converges to zero as $\eps\to 0$ given the existence of suitable $\cV^1$. After an application of the estimate from \cite{williams2}, \eqref{new75} can then be obtained, assuming $\eps \cU^1_\eps$ is bounded by $C \eps$ in an appropriate norm.

However, there are two major obstacles to constructing such a corrector $\cV^1$ for $\cV^{0,n+1}$. The first is that each of our iterates $\cV^{0,n+1}$ only \emph{approximately} solves the corresponding iterate equation of the profile equations. This is forced on us since our profile equations include complex transport equations which are not generally exactly solvable. On the other hand, as mentioned in Remark \ref{new76}, typically in geometric optics, the solvability conditions on an ansatz such as $\cV^0$ which imply $\cV^0$ has a corrector are satisfied by \emph{exactly} solving profile equations; similarly, in \cite{cgw}, solvability is achieved through exact solutions of the iterated profile equations. Instead, however, we found it was sufficient to solve each of the iterated profile equations up to an error which is zero at the boundary $\{x_d=0\}$ and `purely elliptic' in the sense that it depends only on components $\theta_m$ which are the arguments of elliptic profiles and is consequently \emph{elliptically polarized}; these conditions on the error are described precisely with the hypotheses of Proposition \ref{new3}. Essentially, we form new solvability conditions which are the iterated profile equations modified by including the error terms.

The second issue is that, since we do not make a small-divisor assumption, we can only guarantee solvability if we are working with \emph{finite} trigonometric polynomials\footnote{For details on solvability, see Proposition \ref{new7}.}, as opposed to general elements of $H^s_{T_0}$ with infinitely many nonzero Fourier coefficients. To resolve this, we approximate each $\cV^{0,n+1}$ by a finite trigonometric polynomial $\cV^{0,n+1}_p$, construct a corresponding corrector $\cV^1_p$, and use Proposition \ref{propA} and Lemma \ref{e11} to work with finite sums $\cU^{0,n+1}_p$ and $\cU^{0,n+1}_{p,\eps}$ approximating $\cU^{0,n+1}$ and $\cU^{0,n+1}_\eps$. Using these approximations, we are finally able to conclude \eqref{new75}.

\section{Profile equations: formulation with periodic profiles }\label{defns}

\begin{definition}\label{indexdefn}
(i) We define $Z^M\subset\mathbb{Z}^M$ by
\begin{align}
Z^M:=\{\alpha=(\alpha_i)^M_{i=1}:\alpha_i\in Z_i \},
\end{align}
where $Z_i$ is defined by
\begin{align}
Z_i:=\left\{ 
\begin{matrix}
  \mathbb{Z} \textrm{ for } i\in \mathcal{I} \cup \mathcal{O}, \\
  \mathbb{Z}^+ \textrm{ for } i\in \mathcal{P}, \\
  \mathbb{Z}^- \textrm{ for } i\in \mathcal{N},
 \end{matrix}
\right.
\end{align}
where $\cI$, $\cO$, $\cP$, and $\cN$ are as defined in the comments following Definition \ref{def2}. We also have the equivalent formulation \emph{$Z_i=\{n\in\Z: n \textrm{Im}\uomega_i\geq0\}$}.\\
(ii) We also define
\emph{
\begin{align}
Z^{M;k}:=\{\alpha\in Z^M:\textrm{ at most k components of }\alpha\textrm{ are nonzero}\}.
\end{align}
}
\end{definition}
\begin{definition} For $k=1,\,2$, we define the following spaces: \\
\begin{align}\label{a1}
H^{s;k}(\overline{\R}^{d+1}_+\times\mathbb{T}^M)=\left\{\mathcal{V}(x,\theta)\in H^s(\overline{\R}^{d+1}_+\times\mathbb{T}^M)
:\mathcal{V}(x,\theta)=\sum_{\alpha\in Z^{M;k}}V_\alpha(x)e^{i\alpha\cdot\theta} \right\}.
\end{align}

\end{definition}
For $s>(d+1+2)/2$, it is clear that multiplication defines a continuous map
\begin{align}
H^{s;1}(\overline{\R}^{d+1}_+\times\mathbb{T}^M)\times H^{s;1}(\overline{\R}^{d+1}_+\times\mathbb{T}^M)\to H^{s;2}(\overline{\R}^{d+1}_+\times\mathbb{T}^M).
\end{align}
\begin{rem}\label{new28}
\emph{
We define
\begin{align}
\bold{C}^M:=\bold{C}_1\times \bold{C}_2 \times \cdots \times \bold{C}_M,
\end{align}
where $\bold{C}_i$ is defined by
\begin{align}
\bold{C}_i:=\left\{
\begin{matrix}
  \R \textrm{ for } i\in \mathcal{I} \cup \mathcal{O}, \\
  \{\textrm{Im } z\geq 0\} \textrm{ for } i\in \mathcal{P}, \\
  \{\textrm{Im } z\leq 0\} \textrm{ for } i\in \mathcal{N}.
 \end{matrix}
\right.
\end{align}
For $\cV\in H^{s;2}(\overline{\R}^{d+1}_+\times\mathbb{T}^M)$ where $s>\frac{d}{2}+3$, one can show that $\textrm{spec }\mathcal{V}\subset Z^{M;2}$ implies $\mathcal{V}$ extends holomorphically in $\theta$ to the interior of $\bold{C}^M$. In particular, this uses the fact that then $\textrm{Im}(\alpha_i\theta_i) \geq 0$ for $i\in \mathcal{P}\cup \mathcal{N}$. This allows us to make sense of
\begin{align}
\mathcal{U}(x,\theta_0,\xi_d):=\mathcal{V}(x,\theta_0+\uomega_1\xi_d,\dots,\theta_0+\uomega_M\xi_d)
\end{align}
for $\theta_0\in\bT$, $\xi_d\geq 0$.
}
\end{rem}

\subsection{The periodic ansatz and its corrector. }\label{peransatzwithcor}

Our periodic ansatz will have the form
\begin{align}\label{new22}
\mathcal{V}^0(x,\theta)=\underline{v}(x)+\sum^M_{m=1}\sum^{\mu_m}_{k=1}\sigma_{m,k}(x,\theta_m)r_{m,k},
\end{align}
a function in $H^{s;1}(\overline{\R}^{d+1}_+\times\mathbb{T}^M)$, where $s$ is to be specified, and which is holomorphic in $\theta\in\bold{C}^M\subset\C^M$, in particular.
Using the notation
\begin{align}\label{new15}
\theta(\theta_0,\xi_d)=(\theta_0+\uomega_1\xi_d,\dots,\theta_0+\uomega_M\xi_d),
\end{align}
a calculation shows
\begin{align}
u^a_\epsilon(x):=\mathcal{U}^0(x,\theta_0,\xi_d)|_{\theta_0=\frac{\phi_0}{\epsilon},\xi_d=\frac{x_d}{\epsilon}}=\mathcal{V}^0(x,\theta(\theta_0,\xi_d))|_{\theta_0=\frac{\phi_0}{\epsilon},\xi_d=\frac{x_d}{\epsilon}}
\end{align}
results in the vanishing of the terms of order $\frac{1}{\epsilon}$ when plugging $u^a_\epsilon$ into $P(\epsilon u_\epsilon,\partial_x)u_\epsilon$ in \eqref{new53}(a). If we plug in a corrected approximate solution
\begin{align}
u^c_\epsilon(x):=\left(\mathcal{U}^0(x,\theta_0,\xi_d)+\epsilon \mathcal{U}^1(x,\theta_0,\xi_d)\right)|_{\theta_0=\frac{\phi_0}{\epsilon},\xi_d=\frac{x_d}{\epsilon}}=\left(\mathcal{V}^0(x,\theta(\theta_0,\xi_d))+\epsilon\mathcal{V}^1(x,\theta(\theta_0,\xi_d))\right)|_{\theta_0=\frac{\phi_0}{\epsilon},\xi_d=\frac{x_d}{\epsilon}}
\end{align}
where we do not necessarily have $\mathcal{V}^1$ of the form \eqref{new22}, but require $\mathcal{V}^1(x,\theta)\in H^{s;2}(x,\theta)$, then the terms of order $\epsilon^0$ cancel out if and only if
\begin{align}\label{zeroterms}
\mathcal{L}'(\partial_{\theta_0},\partial_{\xi_d})\mathcal{U}^1+\tilde{L}(\partial_x)\mathcal{U}^0+\mathcal{M}'(\mathcal{U}^0)\partial_{\theta_0}\mathcal{U}^0=F(0)\mathcal{U}^0,
\end{align}
using the notation
\begin{align}
\cL'(\partial_{\theta_0},\partial_{\xi_d}):=\cA(\beta)\partial_{\theta_0}+\partial_{\xi_d},
\end{align}
\begin{align}
\cM'(\cU):=\sum^{d-1}_{j=0}\partial_u \tilde{A}_j (0)\cU \beta_j.
\end{align}
A sufficient condition for \eqref{zeroterms} is
\begin{align}\label{new23}
\mathcal{L}(\partial_\theta)\mathcal{V}^1+\tilde{L}(\partial_x)\mathcal{V}^0+\mathcal{M}(\mathcal{V}^0)\partial_\theta\mathcal{V}^0=F(0)\mathcal{V}^0,
\end{align}
where
\begin{align}
\cL(\D_\theta)=\sum^M_{m=1}\tilde{L}(d\phi_m)\D_{\theta_m}
\end{align}
and
\begin{align}
\mathcal{M}(\cV)\partial_\theta := \sum^M_{m=1}\sum^{d-1}_{j=0} \beta_j \partial_u \tilde{A}_j(0)\cV\partial_{\theta_m}.
\end{align}
Observe that the operator $\cL(\partial_\theta)$ is singular, so that one cannot simply invert it to solve for $\cV^1$ in \eqref{new23}. One approach is to ensure the existence of a solution $\cV^1$ by imposing the following condition on $\cV^0$:
\begin{align}\label{new23b}
\tilde{L}(\partial_x)\mathcal{V}^0+\mathcal{M}(\mathcal{V}^0)\partial_\theta\mathcal{V}^0-F(0)\mathcal{V}^0\in \textrm{Im }\mathcal{L}(\partial_\theta),
\end{align}
a condition which turns out to be equivalent to a differential equation\footnote{This differential equation is \eqref{new26}.} in $\cV^0$.

The condition \eqref{new23b} on $\cV^0$ does not clearly determine $\uv$ and the $\sigma_{m,k}$, the parts of $\cV^0$ which remain to be chosen.
Strictly speaking, we will not satisfy \eqref{new23} or \eqref{new23b}. Instead, we replace the differential equation (equivalent to \eqref{new23b}) with another one\footnote{This is the differential equation \eqref{new27}.} which agrees when we plug in $\theta=\theta(\theta_0,\xi_d)$ and can be decomposed into the system of profile equations (in $\uv$ and the $\sigma_{m,k}$.) This allows us to solve for the profiles and a corrector $\cV^1$ satisfying an equation which agrees with \eqref{new23} on $\theta=\theta(\theta_0,\xi_d)$. Thus, this condition is \emph{also} sufficient for \eqref{zeroterms}.

Now we work towards defining the projection operators to appear in these differential equations, $\bE$ and $\bE^\flat$, which will allow us to construct the ansatz $\cV^0$ and its corrector $\cV^1$.

\begin{definition}

Setting $\phi:=(\phi_1,\dots,\phi_M)$, we call $\alpha\in Z^{M;2}$ a \emph{characteristic mode} provided $\textrm{\emph{det}}  L(d(\alpha \cdot \phi))=0$ and write $\alpha\in \cC$. We decompose
\begin{align}\label{a19}
\cC=\cup^M_{m=1}\cC_m,\textrm{ where }\cC_m=\{\alpha\in Z^{M;2} : \alpha \cdot \phi = n_\alpha\phi_m \textrm{ for some }n_\alpha\in\mathbb{Z} \}.
\end{align}
\end{definition}
\begin{rem}
\emph{
Normally, the above is defined with $\mathbb{Z}^{M;2}$ in place of $Z^{M;2}$, but in preventing our solution from blowing up,  we have restricted its spectrum such that only $\alpha\in Z^{M;2}$ are considered.
}
\end{rem}

Now we are ready to define the projectors which will give us the profile equations and the differential equation for the condition \eqref{new23b}.

\begin{definition}\label{new47}
Let $\mathcal{V}\in H^{s+1;2}_T$. The action of  $\mathbb{E}$ on $\mathcal{V}$ is defined by
\textup{
\begin{align}\label{a20}
\mathbb{E}=\mathbb{E}_0+\sum^M_{m=1}\mathbb{E}_m,\textrm{ where }\mathbb{E}_0\mathcal{V}=V_0\textrm{ and }\mathbb{E}_m\mathcal{V}=\sum_{\alpha\in \cC_m\setminus0}P_m V_\alpha(x)e^{in_\alpha\theta_m},
\end{align}
}
where $P_m$ denotes the projection onto $\textrm{Ker }L(d\phi_m)$, the action of $\mathbb{E}^\flat$ on $\mathcal{V}$ is defined by
\textup{
\begin{align}\label{a21}
\mathbb{E}^\flat=\mathbb{E}^\flat_0+\sum^M_{m=1}\mathbb{E}^\flat_m,\textrm{ where }\mathbb{E}^\flat_0\mathcal{V}=V_0\textrm{ and }\mathbb{E}^\flat_m\mathcal{V}=\sum_{\alpha\in \cC_m\setminus0}P_m V_\alpha(x)e^{i\alpha\cdot\theta},
\end{align}
}
and we use the notation
\begin{align}\label{new14}
\bE_h=\bE_0+\sum_{m\in\mathcal{I}\cup\mathcal{O}}\bE_m,\quad \bE_e=\sum_{m\in\mathcal{P}\cup\mathcal{N}}\bE_m.
\end{align}
\end{definition}

\begin{rem}\label{new46}
\textup{
(i) It is shown in \cite{cgw} that $\mathbb{E}$ is a continuous map in the $H^s$ norm. Here, to complete the statement $\mathbb{E}:H^{s;2}\rightarrow H^{s;1}$, we must also have for each $\alpha\in \cC_m$ that $n_\alpha\in Z_m$. This is easily verified from the definitions of $Z_m$, $\cC_m$, and the $n_\alpha$. A similar calculation is done in Remark \ref{r1}. (ii) It is not hard to show the continuity of $\mathbb{E}^\flat:H^{s;2}\rightarrow H^{s;2}$.
}
\end{rem}
\begin{rem}\label{new18}
\textup{
Denoting evaluation at $\theta=\theta(\theta_0,\xi_d)$ by $\Phi$, observe $\Phi\circ\mathbb{E}=\Phi\circ\mathbb{E}^\flat$. The projector $\mathbb{E}$ serves as the tool for solving for $\uv(x)$ and the periodic profiles $\sigma_{m,k}(x,\theta_m)$ of our ansatz. The projector $\mathbb{E}^\flat$ is key to describing solvability (such as the condition \eqref{new23b}) and is thus used in the error analysis in solving away an $O(1)$ error output of $\Phi\circ(I-\mathbb{E})$ written in the form $\Phi\circ(I-\mathbb{E}^\flat)$, so that we can prove Theorem \ref{e1}. That is the purpose which the following proposition serves.
}
\end{rem}
\begin{prop}\label{new7}
Given $\mathcal{H}\in H^{s;2}_T(x,\theta)$ with finitely many nonzero Fourier coefficients, suppose
\begin{align}\label{new9}
\bE^\flat{\mathcal{H}}=0,
\end{align}
Then there exists $\mathcal{V}\in H^{s;2}_T(x,\theta)$ such that
\begin{align}\label{new8}
\mathcal{L}(\partial_\theta)\mathcal{V}=\mathcal{H}.
\end{align}
\end{prop}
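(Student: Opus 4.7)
The operator $\mathcal{L}(\partial_\theta)$ is a Fourier multiplier in $\theta$, so the natural plan is to solve the equation mode-by-mode on the Fourier side. Writing
\[
\mathcal{H}(x,\theta)=\sum_{\alpha\in F}H_\alpha(x)\,e^{i\alpha\cdot\theta},\qquad \mathcal{V}(x,\theta)=\sum_{\alpha\in F}V_\alpha(x)\,e^{i\alpha\cdot\theta},
\]
for the finite set $F\subset Z^{M;2}$ supporting $\mathcal{H}$, equation \eqref{new8} becomes the family of algebraic equations
\[
i\,\tilde{L}\bigl(d(\alpha\cdot\phi)\bigr)\,V_\alpha(x)=H_\alpha(x),\qquad \alpha\in F,
\]
since $\tilde{L}$ is linear in its covector argument. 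Because $\mathcal{H}$ has only finitely many nonzero modes, if each $V_\alpha$ can be chosen in $H^s_T$ on each mode, setting $V_\alpha=0$ outside $F$ gives a finite trigonometric polynomial $\mathcal{V}\in H^{s;2}_T$, so no small-divisor or summability issue ever appears — this is precisely why the proposition restricts to finite Fourier support.

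\textbf{Case split.} Fix $\alpha\in F$.

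\emph{Noncharacteristic case $\alpha\notin\mathcal{C}$.} Then $\det L(d(\alpha\cdot\phi))\neq 0$, so $\tilde{L}(d(\alpha\cdot\phi))$ is invertible and I set $V_\alpha:=\frac{1}{i}\tilde{L}(d(\alpha\cdot\phi))^{-1}H_\alpha$, which inherits the $H^s_T$ regularity of $H_\alpha$.

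\emph{Zero mode $\alpha=0$.} Here $\bE^\flat_0\mathcal{H}=H_0=0$, and the equation is automatically satisfied since $\mathcal{L}(\partial_\theta)$ annihilates $\theta$-independent terms; choose $V_0=0$.

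\emph{Characteristic case $\alpha\in\mathcal{C}_m\setminus 0$.} By definition of $\mathcal{C}_m$, $\alpha\cdot\phi=n_\alpha\phi_m$ for some $n_\alpha\in\mathbb{Z}$, so $\tilde{L}(d(\alpha\cdot\phi))=n_\alpha A_d^{-1}L(d\phi_m)$. The hypothesis $\bE^\flat\mathcal{H}=0$ gives $P_m H_\alpha=0$, and Lemma \ref{lem2} then yields $H_\alpha\in\ker P_m=\operatorname{Im} A_d^{-1}L(d\phi_m)$. Picking any $W_\alpha\in H^s_T$ with $A_d^{-1}L(d\phi_m)W_\alpha=H_\alpha$ (possible at the level of $x$-dependent coefficients because $A_d^{-1}L(d\phi_m)$ is a constant matrix), I set $V_\alpha:=\frac{1}{in_\alpha}W_\alpha$, provided $n_\alpha\neq 0$.

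\textbf{The main technical point.} The one step that needs justification is that $n_\alpha\neq 0$ for every $\alpha\in\mathcal{C}\setminus 0$. Suppose for contradiction that $n_\alpha=0$ with $\alpha\neq 0$. Then $\alpha\cdot\phi\equiv 0$ as an affine function of $x$. Since $\phi_m(x)=\beta\cdot x'+\uomega_m x_d$ with $\beta\neq 0$ and the $\uomega_m$ pairwise distinct, this forces $\sum_m\alpha_m=0$ and $\sum_m\alpha_m\uomega_m=0$. But $\alpha\in Z^{M;2}$ has at most two nonzero components; if exactly one, say $\alpha_i$, were nonzero, $\alpha_i=0$ immediately; if two, say $\alpha_i,\alpha_j$, are nonzero, the two linear relations give $\alpha_i(\uomega_i-\uomega_j)=0$, contradicting $\uomega_i\neq\uomega_j$. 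Hence $n_\alpha\neq 0$ and the construction in the characteristic case is well defined.

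Assembling the three cases, $\mathcal{V}=\sum_{\alpha\in F}V_\alpha(x)e^{i\alpha\cdot\theta}$ lies in $H^{s;2}_T(x,\theta)$ because it is a finite sum with $H^s$ coefficients over indices in $Z^{M;2}$, and by construction $\mathcal{L}(\partial_\theta)\mathcal{V}=\mathcal{H}$, completing the proof. The main conceptual obstacle is the distinct-eigenvalue argument ruling out $n_\alpha=0$; the rest is linear algebra mode by mode, made trivial by the finite-support assumption.
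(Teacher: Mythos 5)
Your proposal is correct and follows essentially the same route as the paper: solve mode by mode, invert $\tilde{L}(d(\alpha\cdot\phi))$ off the characteristic set, and on each $\mathcal{C}_m\setminus 0$ use $\bE^\flat\mathcal{H}=0$, the identity $\operatorname{Im}A_d^{-1}L(d\phi_m)=\ker P_m$ from Lemma \ref{lem2}, and division by $n_\alpha$, with the finite Fourier support guaranteeing $\mathcal{V}\in H^{s;2}_T$. Your explicit verification that $n_\alpha\neq 0$ (via $\beta\neq 0$, the distinctness of the $\uomega_m$, and $\alpha$ having at most two nonzero components) is a detail the paper leaves implicit, but it does not change the argument.
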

\begin{proof}
We write out the series of $\mathcal{H}$ as
\begin{align}
\mathcal{H}(x,\theta)=\sum_{\alpha\in Z^{M;2}}H_\alpha(x)e^{i\alpha\cdot\theta}.
\end{align}
Now we search for $V_\alpha(x)$ such that, upon defining
\begin{align}\label{new10}
\mathcal{V}(x,\theta):=\sum_{\alpha\in Z^{M;2}}V_\alpha(x)e^{i\alpha\cdot\theta},
\end{align}
one has
\begin{align}
\mathcal{L}(\partial_\theta)\mathcal{V}(x,\theta)=\sum^M_{m=1}\sum_{\alpha\in Z^{M;2}}i\alpha_m\tilde{L}(d\phi_m)V_\alpha(x)e^{i\alpha\cdot\theta}=\sum_{\alpha\in Z^{M;2}}H_\alpha(x)e^{i\alpha\cdot\theta}.
\end{align}
This holds if and only if for all $\alpha\in Z^{M;2}$
\begin{align}\label{new11}
i\tilde{L}\left(\sum_m \alpha_m \beta, \alpha\cdot \omega\right)V_\alpha(x)=H_\alpha(x),
\end{align}
where $\omega=(\uomega_1,\ldots,\uomega_M)$. We handle first the case where $\alpha\in \cC_j\setminus 0$ for some $j=1,2,\ldots,M$. Note that
\begin{align}
0=\mathbb{E}^\flat_j \mathcal{H}(x,\theta)
=\sum_{\alpha\in \cC_j\setminus0}P_j H_\alpha(x)e^{i\alpha\cdot\theta},
\end{align}
which implies for each $\alpha\in \cC_j$
\begin{align}
0=P_j H_\alpha(x).
\end{align}
Recalling from Lemma \ref{lem2} that $\textrm{Im }A^{-1}_d L(d\phi_j)=\textrm{Ker }P_j$, we see there exists $W_\alpha(x)\in H^s_T(x)$ such that
\begin{align}
H_\alpha(x)=i\tilde{L}(d\phi_j)W_\alpha(x).
\end{align}
Observe then
\begin{eqnarray}
H_\alpha(x)
&=& i\tilde{L}(n_\alpha\beta,n_\alpha\uomega_j)\frac{1}{n_\alpha}W_\alpha(x),\\
&=& i\tilde{L}\left(\sum^M_{m=1}\alpha_m\beta,\alpha\cdot \omega\right)\frac{1}{n_\alpha}W_\alpha(x).
\end{eqnarray}
So we may satisfy \eqref{new11} by defining $V_\alpha(x)=\frac{1}{n_\alpha}W_\alpha(x)$.\\
For the case $\alpha=0$, where $\bE^\flat_0 \mathcal{H}=H_\alpha=0$, or for any other $\alpha$ such that $H_\alpha=0$, we may satisfy \eqref{new11} by taking $V_\alpha=0$.\\
For $\alpha\notin \cC$ with $H_\alpha\neq 0$, we have $\textrm{det }L(\sum_m \alpha_m\beta,\alpha\cdot\omega)\neq 0$ by definition of $\cC$, so then \eqref{new11} can be solved directly for $V_\alpha(x)$. Since there are only finitely many nonzero $H_\alpha$, we find the same holds for the $V_\alpha$, and so the sum in \eqref{new10} indeed describes an element of $H^{s;2}_T(x,\theta)$.
\end{proof}

\begin{rem}\label{new25}
(i)\textup{ In view of Proposition \ref{new7}, in order to construct a corrector $\cV^1$ appropriate for our periodic ansatz $\cV^0$ to satisfy \eqref{new23}, it is tempting to require something like
\begin{align}\label{new26}
\bE^\flat(\tilde{L}(\partial_x)\mathcal{V}^0+\mathcal{M}(\mathcal{V}^0)\partial_\theta\mathcal{V}^0)=\bE^\flat(F(0)\mathcal{V}^0).
\end{align}
However, note that to use Proposition \ref{new7}, if the $\mathcal{V}^0$ we seek has infinitely many nonzero Fourier coefficients, \eqref{new26} alone is insufficient; we must use finite trigonometric polynomial approximations. Furthermore, even if $\mathcal{V}^0$ is replaced with a finite trigonometric polynomial approximation, there is no clear way to solve \eqref{new26} for $\mathcal{V}^0$ of the form \eqref{new22}. We explain: the components of the projector $\bE^\flat$ are the $\bE^\flat_m$; typically, one obtains some profile, say, $\sigma_{m,k}(x,\theta_m)$ in \eqref{new22} by examining the $m$th component of a projector. However, $\bE^\flat_m$ maps into $H^{s;2}(x,\theta)$, meaning its output generally varies with \emph{more than one} component of $\theta$ as opposed to varying with only the $\theta_m$ component. Meanwhile, $\bE$ has components $\bE_m$ mapping into $H^s(x,\theta_m)$, spaces suited to $\sigma_{m,k}(x,\theta_m)$. In fact, $\cV^0$ having the form \eqref{new22} is equivalent to having
\begin{align}\label{new29}
\bE\cV^0=\cV^0.
\end{align}
}
(ii)\textup{ With the considerations made in Remark \ref{new18}, it is natural to require instead of \eqref{new26} that
\begin{align}\label{new27}
\bE(\tilde{L}(\partial_x)\mathcal{V}^0+\mathcal{M}(\mathcal{V}^0)\partial_\theta\mathcal{V}^0)=\bE(F(0)\mathcal{V}^0).
\end{align}
In fact, \eqref{new27} gives the solvability conditions discussed in Remark \ref{new76} which ensure the existence of a corrected approximate solution $u^c_\eps(x)$ resulting in the vanishing of the terms of order $O(1)$ in \eqref{new53}(a). To see this, recall that the $O(1)$ terms vanish if and only if \eqref{zeroterms} holds. If we satisfy \eqref{new27}, we have
\begin{align}\label{reagt}
(I-\bE)(\tilde{L}(\partial_x)\mathcal{V}^0+\mathcal{M}(\mathcal{V}^0)\partial_\theta\mathcal{V}^0-F(0)\cV^0)=\tilde{L}(\partial_x)\mathcal{V}^0+\mathcal{M}(\mathcal{V}^0)\partial_\theta\mathcal{V}^0-F(0)\cV^0.
\end{align}
Once $\cV^0$ is determined by imposing \eqref{new27}, with Proposition \ref{new7} one can solve for a corrector $\cV^1$ in
\begin{align}\label{reagtflat}
\cL(\partial_\theta)\cV^1=-(I-\bE^\flat)(\tilde{L}(\partial_x)\mathcal{V}^0+\mathcal{M}(\mathcal{V}^0)\partial_\theta\mathcal{V}^0-F(0)\cV^0),
\end{align}
(if we assume for the moment that we are working with finite trigonometric polynomials.) Although such $\cV^1$ does not necessarily satisfy \eqref{new23}, after evaluating at $\theta=\theta(\theta_0,\xi_d)$, which, recall, is denoted by $\Phi$, it follows from \eqref{reagt} and \eqref{reagtflat} that
\begin{align}\label{trueCondition}
\Phi\left(\cL(\partial_\theta)\cV^1\right)=-\Phi\left(\tilde{L}(\partial_x)\mathcal{V}^0+\mathcal{M}(\mathcal{V}^0)\partial_\theta\mathcal{V}^0-F(0)\cV^0\right),
\end{align}
where we have used the property $\Phi\circ(I-\E^\flat)=\Phi\circ(I-\E)$. In fact, for $\cU^1=\Phi(\cV^1)$ and $\cU^0=\Phi(\cV^0)$, \eqref{trueCondition} is equivalent to \eqref{zeroterms}, giving $u^c_\eps(x)$ solving \eqref{new53}(a) to order $O(1)$.
}
\end{rem}

It is from the components of \eqref{new27} that we will obtain the components $\uv$, $\sigma_{m,k}$, $m=1,\ldots,M$, $k=1,\ldots,\mu_m$, of $\cV^0$. Now we collect the set of equations which determines $\cV^0(x,\theta)\in H^{s;1}(\overline{\R}^{d+1}_+\times\bT^M)$ for $s$ sufficiently large (specified later):
\begin{align}\label{fullConditions}
\begin{split}
&a)\;\bE\cV^0=\cV^0\\
&b)\;\bE\left(\tilde{L}(\partial_x)\cV^0+\cM(\cV^0)\partial_{\theta}\cV^0\right)=\bE(F(0)\cV^0)\text{ in }x_d\geq 0\\
&c)\;B(0)\cV^0(x',0,\theta_0,\dots,\theta_0)=G(x',\theta_0)\\
&d)\;\cV^0=0\text{ in }t<0.
\end{split}
\end{align}

In summary, \eqref{fullConditions}(a) gives that $\cL(\partial_\theta)\cV^0=0$, and as a result, in the system for the original solution as a perturbation, this guarantees that \eqref{new53}(a) is solved to order $O(\frac{1}{\eps})$. Equation \eqref{fullConditions}(b) represents the solvability conditions discussed in Remark \ref{new76}, guaranteeing the existence of a corrected approximate solution $u^c_\eps(x)$ as seen in \eqref{new12} which solves \eqref{new53}(a) to order $O(1)$. Finally, equations \eqref{fullConditions}(c) and \eqref{fullConditions}(d) straightforwardly correspond to \eqref{new53}(b) and \eqref{new53}(c).

\textbf{Resonances. }

The next major task is to rephrase \eqref{fullConditions}(b) in terms of $\uv$ and the $\sigma_{m,k}$ to get the interior profile equations. \eqref{fullConditions}(b) will be broken down into components where $\bE$ is replaced by $E_0$ and $\bE_p$, $p=1,\ldots,M$, yielding equations for $\uv$ and the $\sigma_{p,l}$, $l=1,\ldots,\mu_p$. The nonlinearity in \eqref{fullConditions}(b) leads to resonances, which are generated in products such as $\sigma_{q,k}(x,\frac{\phi_q}{\eps})\D_{\theta_r}\sigma_{r,k'}(x,\frac{\phi_r}{\eps})$ when there exists a relation of the form
\begin{align}
n_p \phi_p = n_q \phi_q + n_r \phi_r\textrm{ where }p\in\{1,\ldots,M\}\setminus\{q,r\}\textrm{ and }(p,q,r)\in\Z\times Z_q\times Z_r.
\end{align}
This means that $\phi_q$ oscillations interact with $\phi_r$ relations to produce $\phi_p$ oscillations. While such $\phi_p$ oscillations arise on their own in products such as $\sigma_{q,k}(x,\frac{\phi_q}{\eps})\D_{\theta_r}\sigma_{r,k'}(x,\frac{\phi_r}{\eps})$, not even some $\theta_p$ dependence is present in the product $\sigma_{q,k}(x,\theta_q)\D_{\theta_r}\sigma_{r,k'}(x,\theta_r)$ before we evaluate $\theta_p=\frac{\phi_p}{\eps},\,\theta_q=\frac{\phi_q}{\eps},\,\theta_r=\frac{\phi_r}{\eps}$. Meanwhile $\E_p$ accounts for these interactions when it maps such $(\theta_q,\theta_r)$ oscillations to $\theta_p$ oscillations, as it maps $H^{s;2}(x,\theta)$ to $H^s(x,\theta_p)$. When we express $\E_p$\eqref{fullConditions}(b) solely in terms of functions of $(x,\theta_p)$, it takes some effort to evaluate $\E_p(\cM(\cV^0)\partial_{\theta}\cV^0)$ in terms of the profiles, in particular to describe these interactions in terms of $\sigma_{q,k}$ and $\sigma_{r,k'}$. This is the purpose of the interaction integrals, \eqref{c4}, appearing in the profile equations. To handle these difficulties we introduce some new definitions regarding resonances and interaction integrals.
\begin{definition}
We say $(\phi_p,\phi_q,\phi_r)$ is an \emph{ordered triple of resonant phases} and that $(\phi_q,\phi_r)$ forms a \emph{$p$-resonance} if
\begin{align}\label{rename1}
n_p\phi_p=n_q\phi_q+n_r\phi_r
\end{align}
for some $(n_p,n_q,n_r)\in \Z\times Z_q \times Z_r$, each entry nonzero. The triple $(\phi_p,\phi_q,\phi_r)$ is called \emph{normalized} if we have both (i) $gcd(n_p,n_q,n_r)=1$ and (ii) if $p\in \mathcal{I}\cup\mathcal{O}$, then $n_p>0$.
\end{definition}
First, we explicitly treat the case that the only normalized triples of resonant phases are permutations of $(\phi_p,\phi_q,\phi_r)$ corresponding to one of the six rearrangements of \eqref{rename1}. It follows from the next remark that the sign of $n_p$ is determined, so this normalization uniquely determines $n_p$, $n_q$, $n_r$.
\begin{rem}\label{r1}
\emph{
For a triple of resonant phases $(\phi_p,\phi_q,\phi_r)$ with \eqref{rename1}, one has $(n_p,n_q,n_r)\in Z_p \times Z_q \times Z_r$. To see this, taking the imaginary part of $n_p\phi_p=n_q\phi_q+n_r\phi_r$, observe
\begin{align}
n_p \textrm{Im }\uomega_p =n_q \textrm{Im }\uomega_q+n_r\textrm{Im }\uomega_r.
\end{align}
Thus, if $(\phi_q,\phi_r)$ forms a $p$-resonance, since $n_q\textrm{Im }\uomega_q$ and $n_r\textrm{Im }\uomega_r$ are nonnegative, so is $n_p \textrm{Im }\uomega_p$, so it follows that $n_p\in Z_p$.  
}
\end{rem}

We now make a related observation with the following proposition which will greatly simplify our solution of the interior equations.
\begin{prop}\label{new24}
(i) A hyperbolic resonance (i.e. a $p$-resonance, where $p\in \mathcal{I}\cup \mathcal{O}$) can only be formed by a pair of hyperbolic phases, and 
(ii) an elliptic resonance can only be formed by a pair including at least one elliptic phase.
\end{prop}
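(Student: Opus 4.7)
The plan is to exploit the imaginary-part identity already recorded in Remark~\ref{r1}. A $p$-resonance by definition satisfies
\[
n_p\phi_p = n_q\phi_q + n_r\phi_r, \qquad (n_p,n_q,n_r)\in\Z\times Z_q\times Z_r, \text{ all nonzero},
\]
and taking imaginary parts of the coefficients of $x_d$ (equivalently, reading off the last coordinate of $(\beta,\uomega_\cdot)$) gives
\[
n_p\,\mathrm{Im}\,\uomega_p \;=\; n_q\,\mathrm{Im}\,\uomega_q + n_r\,\mathrm{Im}\,\uomega_r.
\]
The crucial sign information is that $n_q\,\mathrm{Im}\,\uomega_q\ge 0$ and $n_r\,\mathrm{Im}\,\uomega_r\ge 0$, since $Z_q = \{n\in\Z:n\,\mathrm{Im}\,\uomega_q\ge 0\}$ and likewise for $Z_r$. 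From here both assertions fall out by examining signs.

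For part (i), assume $p\in\cI\cup\cO$, so $\uomega_p\in\R$ and the left-hand side vanishes. Then the sum of two nonnegative quantities vanishes, forcing $n_q\,\mathrm{Im}\,\uomega_q = n_r\,\mathrm{Im}\,\uomega_r = 0$. Since $n_q$ and $n_r$ are nonzero integers, this gives $\mathrm{Im}\,\uomega_q = \mathrm{Im}\,\uomega_r = 0$, i.e.\ both $\phi_q$ and $\phi_r$ are hyperbolic.

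For part (ii), assume $p\in\cP\cup\cN$, so $\mathrm{Im}\,\uomega_p\ne 0$ and therefore $n_p\,\mathrm{Im}\,\uomega_p\ne 0$. If both $\phi_q,\phi_r$ were hyperbolic, the right-hand side would vanish, contradicting $n_p\,\mathrm{Im}\,\uomega_p\ne 0$. Hence at least one of $\phi_q,\phi_r$ is elliptic.

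There is no real obstacle here; the statement is essentially a bookkeeping consequence of the sign convention built into the definition of the $Z_i$ together with the imaginary-part identity. The only thing to double-check is that the identity does use the $d$th component $\uomega_\cdot$ (not just the boundary frequency $\beta$, whose contribution is real and therefore cancels automatically), which is immediate from $\phi_m(x) = \beta\cdot x' + \uomega_m x_d$.
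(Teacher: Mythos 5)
Your argument is correct and is essentially the paper's own proof: take the imaginary part of $n_p\phi_p=n_q\phi_q+n_r\phi_r$ (only the $\uomega$'s contribute, since $\beta$ is real) and use $n_q\,\mathrm{Im}\,\uomega_q\ge 0$, $n_r\,\mathrm{Im}\,\uomega_r\ge 0$ from the definition of $Z_q,Z_r$ to read off both conclusions. The paper states (i) in exactly this way and dismisses (ii) as ``similar,'' which you simply spell out.
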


\begin{proof}
(i) Suppose for some $p\in\mathcal{I}\cup\mathcal{O}$ that $n_p\phi_p=n_q\phi_q+n_r\phi_r$ with $(n_p,n_q,n_r)\in Z_p\times Z_q\times Z_r$. Considering the imaginary part gives
\begin{align}
0=n_q \textrm{Im }\uomega_q + n_r \textrm{Im }\uomega_r,
\end{align}
so recalling $n_i\in Z_i$, $n_i\textrm{Im }\uomega_i\geq0$, we see $\textrm{Im }\uomega_q=\textrm{Im }\uomega_r=0$.  Hence $q,r\in \mathcal{I}\cup \mathcal{O}$. The proof of (ii) is similar.
\end{proof}

\subsection{The large system for individual profiles. }\label{largesystem}
Recall the decomposition of the projector
\begin{align}
\bE=\bE_0+\sum^M_{m=1}\bE_m
\end{align}
For $m=1,\ldots,M$, we enumerate by $\{\ell_{m,k},k=1,\ldots,\nu_m\}$ a basis of vectors for the left eigenspace of
\begin{align}\label{b16a}
i\cA(\beta)=A_d^{-1}(0)(\utau I+\sum_{j=1}^{d-1}A_j(0)\ueta_j)
\end{align}
associated to the eigenvalue $-\uomega_m$, and with the following property:
\begin{align}\label{b17}
\ell_{m,k}\cdot r_{m',k'}=\begin{cases}1, \text{ if }m=m'\text{ and }k=k'\\0, \text{ otherwise}\end{cases}.
\end{align}
For $v\in\bC^N$ set
\begin{align}\label{b18}
P_{m,k}v=(\ell_{m,k}\cdot v)r_{m,k} \text{ (no complex conjugation here) }.
\end{align}
So now we may write
\begin{align}\label{new42}
\bE=\bE_0+\sum^M_{m=1}\sum^{\mu_m}_{k=1}\bE_{m,k},
\end{align}
where
\begin{align}\label{b20}
\bE_{m,k}(V_\alpha e^{i\alpha\cdot\theta}):=\begin{cases}(P_{m,k}V_\alpha) e^{in_\alpha \theta_m},\;\alpha\in\cC_m\setminus 0\\0, \text{ otherwise}\end{cases}; \text{ i.e., }\bE_{m,k}=P_{m,k}\bE_m.
\end{align}
Recall the potential use of the projectors $\bE_m$ discussed in Remark \ref{new25}. We will obtain a system of equations for $\uv(x)$, $\sigma_{m,k}(x,\theta_m)$ by applying $\bE_0$ and the $\bE_{m,k}$ to \eqref{fullConditions}(b).

We omit proof of the following lemma, which is almost identical to that of Lemma 2.11 of \cite{cgw}.
\begin{lem}\label{b21}
Suppose $\bE\cV^0=\cV^0$.  Then
\begin{align}\label{b22}
\bE_{m,k}(\tilde{L}(\partial_x)\cV^0)=(X_{\phi_m}\sigma_{m,k})r_{m,k}
\end{align}
where $X_{\phi_m}$ is the characteristic vector field associated to $\phi_m$:
\begin{align}\label{b23}
X_{\phi_m}:=\partial_{x_d}+\sum^{d-1}_{j=0}-\partial_{\xi_j}\uomega_m(\beta)\partial_{x_j}.
\end{align}

\end{lem}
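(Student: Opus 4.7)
The plan is to expand $\cV^0$ via \eqref{new22} and exploit the linearity of both $\tilde L(\partial_x)$ and $\bE_{m,k}$. Since $\tilde L(\partial_x)=A_d^{-1}(0)L(\partial_x)$ has constant coefficients and differentiates only in $x$, it preserves the $\theta$-spectrum of each summand. The mean term $\uv(x)$ contributes a $\theta$-independent function, which is annihilated by $\bE_{m,k}$ because \eqref{b20} requires $\alpha\in\cC_m\setminus 0$. Each oscillatory summand $\sigma_{m',k'}(x,\theta_{m'})r_{m',k'}$ has Fourier spectrum supported in indices with only the $m'$-th entry nonzero. Such an index lies in $\cC_m$ only when there exist nonzero $n,n_\alpha$ with $n\phi_{m'}=n_\alpha\phi_m$, i.e.\ $n(\beta,\uomega_{m'})=n_\alpha(\beta,\uomega_m)$; since $\beta\neq 0$ this forces $n=n_\alpha$ and hence $\uomega_{m'}=\uomega_m$, contradicting the distinctness of the $\uomega_m$ unless $m'=m$. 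Consequently only terms with $m'=m$ contribute.

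It therefore suffices to compute $\sum_{k'=1}^{\mu_m}\bE_{m,k}\tilde L(\partial_x)[\sigma_{m,k'}(x,\theta_m)r_{m,k'}]$. Writing $\sigma_{m,k'}(x,\theta_m)=\sum_{n\in Z_m\setminus 0}a_{n,k'}(x)e^{in\theta_m}$ and using \eqref{b18} and \eqref{b20}, the whole computation reduces to the scalar identity
\begin{equation*}
\ell_{m,k}\cdot\tilde L(\partial_x)[a(x)r_{m,k'}]=\delta_{kk'}\,X_{\phi_m}a(x)
\end{equation*}
for every smooth scalar $a(x)$. Summing this over $n$ and $k'$ recovers $(X_{\phi_m}\sigma_{m,k})r_{m,k}$ and completes the proof.

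The main obstacle is the scalar identity above. To establish it I would differentiate the eigenvector relation $\cA(\zeta)r_m(\zeta)=i\omega_m(\zeta)r_m(\zeta)$, which holds analytically on a conic neighborhood of $\beta$ by Assumption \ref{assumption4} and the remark following it (the spectral projector onto the $i\omega_m(\zeta)$-eigenspace is analytic, so one may take $r_m(\beta)=r_{m,k'}$ in a smooth local family). Differentiating with respect to $\zeta_j$, evaluating at $\zeta=\beta$, and pairing with $\ell_{m,k}$, the contribution of $\partial_{\zeta_j}r_m(\beta)$ drops out since $\ell_{m,k}(\cA(\beta)-i\uomega_m I)=0$, while the biorthogonality \eqref{b17} yields
\begin{equation*}
\ell_{m,k}\cdot A_d^{-1}(0)r_{m,k'}=-\delta_{kk'}\,\partial_{\xi_0}\omega_m(\beta),\qquad \ell_{m,k}\cdot A_d^{-1}(0)A_j(0)r_{m,k'}=-\delta_{kk'}\,\partial_{\xi_j}\omega_m(\beta)
\end{equation*}
for $j=1,\dots,d-1$. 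Substituting these together with $\ell_{m,k}\cdot r_{m,k'}=\delta_{kk'}$ into the expansion
\begin{equation*}
\tilde L(\partial_x)[a(x)r_{m,k'}]=A_d^{-1}(0)r_{m,k'}\,\partial_t a+\sum_{j=1}^{d-1}A_d^{-1}(0)A_j(0)r_{m,k'}\,\partial_{x_j}a+r_{m,k'}\,\partial_{x_d}a
\end{equation*}
and pairing with $\ell_{m,k}$ yields exactly $\delta_{kk'}X_{\phi_m}a(x)$ by comparison with the definition \eqref{charvectorfield}. For nonreal $\uomega_m$ the quantities $\partial_{\xi_j}\omega_m(\beta)$ are complex numbers, and the analyticity of $\omega_m(\zeta)$ near $\beta$ noted after Assumption \ref{assumption4} is precisely what makes these complex derivatives unambiguous; beyond this observation the calculation is identical to the one carried out for Lemma 2.11 of \cite{cgw}.
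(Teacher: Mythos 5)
Your proof is correct and follows essentially the approach the paper relies on: the paper omits the argument by citing Lemma 2.11 of \cite{cgw}, and your computation (isolating the $m$th-phase terms via the spectral support of $\cC_m$, then deriving $\ell_{m,k}\cdot A_d^{-1}(0)r_{m,k'}=-\delta_{kk'}\partial_{\xi_0}\omega_m(\beta)$ and its analogues by differentiating the eigenrelation and using semisimplicity) is exactly that standard argument. Your added remark that Assumption \ref{assumption4} gives an analytic eigenprojection near $\beta$, so the same differentiation is legitimate for nonreal $\uomega_m$, is precisely the (minor) adaptation needed here.
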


 We proceed by defining the interaction integrals which will appear in the interior equations.
\begin{definition}
Suppose $(\phi_q,\phi_r)$ forms a normalized $p$-resonance, with
\begin{align}\label{hello}
n_p\phi_p=n_q\phi_q+n_r\phi_r.
\end{align}
For any $f\in H^s_T(x,\theta_q)$ define $f_{n_q}\in H^s_T(x,\theta_q)$ to be the image of $f$ under the \emph{preparation map}
\begin{align}\label{c3}
f(x,\theta_q)=\sum_{k\in\bZ}f_k(x)e^{ik\theta_q}\to \sum_{k\in\bZ}f_{kn_q}(x)e^{ikn_q\theta_q}.
\end{align}
Suppose $s>\frac{d+3}{2}+1$ and that $\sigma_{q,k}$, $\sigma_{r,k'}\in H^s_T(\overline{\bR}^{d+1}_+\times\mathbb{T})$. We get exactly two normalized $p$-resonance formations from the arrangements of \eqref{hello}:
\begin{align}
n_p\phi_p=n_q\phi_q+n_r\phi_r,\quad n_p\phi_p=n_r\phi_r+n_q\phi_q.
\end{align}
To these equations we associate, respectively, the two families of \emph{prepared integrals}:

\begin{align}\label{c4}
\begin{split}
&J^{k,k'}_{p,n_q,n_r}(x,\theta_p):=\\
&\quad\frac{1}{2\pi}\int^{2\pi}_0(\sigma_{q,k})_{n_q}\left(x,\frac{n_p}{n_q}\theta_p-\frac{n_r}{n_q}\theta_r\right)\partial_{\theta_r}\sigma_{r,k'}(x,\theta_r)d\theta_r,\;\;k\in\{1,\dots,\mu_q\}, k'\in\{1,\dots,\mu_r\},\\
&J^{k,k'}_{p,n_r,n_q}(x,\theta_p):=\\
&\quad\frac{1}{2\pi}\int^{2\pi}_0(\sigma_{r,k})_{n_r}\left(x,\frac{n_p}{n_r}\theta_p-\frac{n_q}{n_r}\theta_q\right)\partial_{\theta_q}\sigma_{q,k'}(x,\theta_q)d\theta_q,\;\;k\in\{1,\dots,\mu_r\}, k'\in\{1,\dots,\mu_q\}.
\end{split}
\end{align}
\end{definition}

\begin{rem}
\emph{
Strictly speaking, $J^{k,k'}_{p,n_q,n_r}$ and $J^{k,k'}_{p,n_r,n_q}$ are symbolically $I^{k,k'}_{n_q,n_p,n'_r}$ and $I^{k,k'}_{n'_r,-n_p,n_q}$ (as defined in \cite{cgw}, with respect to the triple $(\phi_q,\phi_p,\phi_r)$ and $n_q\phi_q=n_p\phi_p+n'_r\phi_r$)  where $n'_r=-n_r$.  The difference is that we do not require $n_q>0$, $q<p<r$ -- recall, instead, our concern is that $(n_p,n_q,n_r)\in Z_p \times Z_q \times Z_r$.
}
\end{rem}
The following proposition shows that the prepared integrals `pick out' the $p$-resonances.% as they did in the context of \cite{cgw}.
\begin{prop}\label{rename2}
 Suppose $s>\frac{d+3}{2}+1$ and that $\sigma_{q,k}$, $\sigma_{r,k'}\in H^s_T(\overline{\bR}^{d+1}_+\times\mathbb{T})$ have Fourier series
\begin{align}\label{c5}
\sigma_{q,k}(x,\theta_q)=\sum_{j\in Z_q\setminus 0}a_j(x)e^{ij\theta_q}\text{ and }\sigma_{r,k'}(x,\theta_r)=\sum_{j\in Z_r\setminus 0}b_j(x)e^{ij\theta_r}.
\end{align}
Given $(\phi_q,\phi_r)$ forms a normalized $p$-resonance, the prepared integral $J_{p,n_q,n_r}^{k,k'}(x,\theta_p)$ belongs to $H^{s-1}_T(x,\theta_p)$ and has Fourier series
\begin{align}\label{c6}
J_{p,n_q,n_r}^{k,k'}(x,\theta_p)=\sum_{j\in\mathbb{Z}} a_{jn_q}(x)b_{jn_r}(x)i\cdot(jn_r)e^{ijn_p\theta_p}.
\end{align}
For the other integral in \eqref{c4}, one switches $n_q$ and $n_r$ above. Moreover, the functions $J_{p,n_q,n_r}^{k,k'}(x,\theta_p)$ can be described for $\theta_p\in\bold{C}_p$ through analytic extension of the expansions into the complex half plane $\bold{C}_p$ in the same manner that the profiles $\sigma_{m,k}(x,\theta_m)$ are extended into $\bold{C}_m$ in Section \ref{RoleOfNonreality}.
\end{prop}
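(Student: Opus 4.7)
My plan is to compute the Fourier series of $J_{p,n_q,n_r}^{k,k'}$ by direct substitution of the expansions \eqref{c5}, verify the resulting expression lies in $H^{s-1}_T$ by a Sobolev-algebra estimate, and deduce the holomorphic extension by the same mechanism used for the profiles in Section~\ref{RoleOfNonreality}. The second integral in \eqref{c4} is handled identically after swapping the roles of $q$ and $r$, so I focus on $J_{p,n_q,n_r}^{k,k'}$.

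First I would expand both factors. By definition of the preparation map \eqref{c3}, $(\sigma_{q,k})_{n_q}(x,\theta)=\sum_{j\in\mathbb{Z}}a_{jn_q}(x)e^{ijn_q\theta}$ (with $a_j=0$ outside $Z_q\setminus 0$); evaluating at $\theta=(n_p\theta_p-n_r\theta_r)/n_q$ cleanly cancels the factor $n_q$ to give $\sum_j a_{jn_q}(x)e^{ijn_p\theta_p}e^{-ijn_r\theta_r}$. Multiplying by $\partial_{\theta_r}\sigma_{r,k'}(x,\theta_r)=\sum_{j'\in Z_r\setminus 0}ij'\,b_{j'}(x)e^{ij'\theta_r}$ and using $\frac{1}{2\pi}\int_0^{2\pi}e^{i(j'-jn_r)\theta_r}\,d\theta_r=\delta_{j',jn_r}$ projects onto $j'=jn_r$, producing the formula \eqref{c6}.

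Next I would read the regularity off \eqref{c6}. Since $s-1>(d+3)/2$ exceeds half the dimension of $\overline{\mathbb{R}}^{d+1}_+$, the Sobolev algebra property gives $\|a_{jn_q}b_{jn_r}\|_{H^{s-1}(x)}\lesssim\|a_{jn_q}\|_{H^{s-1}(x)}\|b_{jn_r}\|_{H^{s-1}(x)}$ termwise. With the fixed nonzero integers $n_p,n_q,n_r$, the elementary estimate $(1+|jn_p|)^{2(s-1)}|jn_r|^2\lesssim(1+|jn_r|)^{2s}$ lets me bound the squared $H^{s-1}_T(x,\theta_p)$ norm of $J_{p,n_q,n_r}^{k,k'}$ by
\[C\,\sup_j\|a_{jn_q}\|_{H^{s-1}(x)}^2\cdot\sum_{j\in\mathbb{Z}}(1+|jn_r|)^{2s}\|b_{jn_r}\|_{H^{s-1}(x)}^2\lesssim\|\sigma_{q,k}\|_{H^s_T}^2\|\sigma_{r,k'}\|_{H^s_T}^2,\]
after reindexing $k=jn_r$ in the last sum. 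I expect the main technical obstacle to be exactly this bookkeeping: the three indices $jn_p,jn_q,jn_r$ play distinct roles (output frequency in $\theta_p$, argument of $a$, argument of $b$), and one must avoid spending the full $H^s$ weight on both factors at once; pairing an $\ell^\infty$ bound on one factor with the weighted $\ell^2$ sum for the other is what makes the argument close without any small-divisor hypothesis.

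Finally, for the holomorphic extension, I would note that the coefficient $a_{jn_q}b_{jn_r}$ in \eqref{c6} vanishes unless $jn_q\in Z_q$ and $jn_r\in Z_r$, so that $jn_q\,\mathrm{Im}\,\uomega_q\geq 0$ and $jn_r\,\mathrm{Im}\,\uomega_r\geq 0$. Taking imaginary parts in the resonance relation $n_p\phi_p=n_q\phi_q+n_r\phi_r$ then gives $jn_p\,\mathrm{Im}\,\uomega_p=jn_q\,\mathrm{Im}\,\uomega_q+jn_r\,\mathrm{Im}\,\uomega_r\geq 0$, hence $jn_p\in Z_p$ for every surviving mode, exactly along the lines of Remark~\ref{r1}. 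Thus the series \eqref{c6} has spectrum contained in $n_p\mathbb{Z}\cap Z_p$, and the $H^{s-1}$ convergence combined with the construction used for the $\sigma_{m,k}$ in Section~\ref{RoleOfNonreality} produces the holomorphic extension of $J_{p,n_q,n_r}^{k,k'}$ into $\bold{C}_p$.
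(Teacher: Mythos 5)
Your derivation of the series \eqref{c6} (the preparation map followed by orthogonality in $\theta_r$, which picks out $j'=jn_r$) and your argument for the extension into ${\bf C}_p$ (nonvanishing of $a_{jn_q}b_{jn_r}$ forces $jn_q\in Z_q$ and $jn_r\in Z_r$, so taking imaginary parts in $n_p\phi_p=n_q\phi_q+n_r\phi_r$ gives $jn_p\,\mathrm{Im}\,\uomega_p\geq 0$, i.e. $jn_p\in Z_p$) are correct, and they match what the paper does: its proof defers the convergence statement to Proposition 2.13 of \cite{cgw} and the extension to the mechanism of Section \ref{RoleOfNonreality}.

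The gap is in the step the paper identifies as the main one, namely that $J^{k,k'}_{p,n_q,n_r}\in H^{s-1}_T(x,\theta_p)$. Your displayed estimate pairs the full spatial norm $\|b_{jn_r}\|_{H^{s-1}(x)}$ with the full frequency weight $(1+|jn_r|)^{2s}$, and the resulting quantity $\sum_j(1+|jn_r|)^{2s}\|b_{jn_r}\|^2_{H^{s-1}(x)}$ is \emph{not} controlled by $\|\sigma_{r,k'}\|^2_{H^s_T}$: by the norm formula of type \eqref{a4}, membership in $H^s_T$ only supplies the weight $(1+|j|)^{2(s-|\beta|)}$ against $\|\partial^\beta_{x}b_j\|_{L^2}$, so the part of $\|b_j\|_{H^{s-1}(x)}$ carrying $s-1$ spatial derivatives comes with weight $(1+|j|)^{2}$, not $(1+|j|)^{2s}$. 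Concretely, if the spatial Fourier transform of $b_j$ is concentrated at frequencies $|\xi|\sim|j|$ with $\|b_j\|_{L^2}\sim(1+|j|)^{-s-\frac12-\epsilon}$, then $\sigma_{r,k'}\in H^s_T$ while your intermediate sum diverges (it behaves like $\sum_j|j|^{2s-3-2\epsilon}$). This is precisely the ``full weight on both counts at once'' you warn against in words, but your inequality commits it on the $b$-factor, spending maximal $x$-regularity and maximal $\theta$-weight on the same term. The repair is the standard Moser/Leibniz bookkeeping carried out in the proof of Proposition 2.13 of \cite{cgw}: for each $|\beta|\leq s-1$ keep the weight $(1+|jn_p|)^{2(s-1-|\beta|)}$, expand $\partial^\beta_x(a_{jn_q}b_{jn_r})$ by Leibniz, place the factor receiving the larger number $|\gamma|$ of $x$-derivatives into the weighted $\ell^2_j$ sum with weight $(1+|j|)^{2(s-1-|\beta|)}|jn_r|^2\lesssim(1+|j|)^{2(s-|\gamma|)}$ (the loss of one derivative in the target space absorbing the factor $|jn_r|$), and estimate the complementary factor, which carries at most roughly half the derivatives, in $L^\infty_x$ uniformly in $j$ via Sobolev embedding/interpolation, using the room afforded by $s-1>\frac{d+3}{2}$. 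With that correction your outline coincides with the argument the paper invokes.
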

\begin{rem}
\textup{We note that for $p\in\mathcal{P}\cup\mathcal{N}$, in \eqref{c6}, we can sum over just $j\in \mathbb{Z}^+\setminus 0$. To see this, observe that then $(\phi_q,\phi_r)$ forms an elliptic resonance, so Proposition \ref{new24} guarantees one of $\phi_q$, $\phi_r$ is elliptic -- without loss of generality, say $\phi_q$. Thus, $Z_q\setminus 0$ consists only of negative integers or positive integers, and the only nonzero Fourier coefficients of $\sigma_{q,k}$ are those $a_j(x)$ appearing in \eqref{c5}, with $j\in Z_q\setminus 0$; we have $a_j(x)=0$ for all other $j$. For example, if $q\in \cP$, then $Z_q=\bZ^+$, and so, noting $n_q>0$, we have the $a_{jn_q}(x)$ appearing in \eqref{c6} are only nonzero for $j>0$. Indeed, $q\in \cN$ implies the same of the $a_{jn_q}(x)$, so we may sum over just $j\in \mathbb{Z}^+\setminus 0$ in either case. We add that, for such $j$, the $e^{ijn_p\theta_p}$ factors are bounded as $\theta_p$ ranges over ${\bf C}_p$.}
\end{rem}
\begin{proof}[Proof of Proposition \ref{rename2}]
The main part of the proof is showing that the expansion \eqref{c6} converges to the desired result in $H^{s-1}_T(x,\theta_p)$, where we have restricted our attention to real $\theta_p$. The proof of this is very similar to the proof of Proposition 2.13 from \cite{cgw}. Analytic extension into the complex half plane $\bold{C}_p$ is done in the same way that the expansions for the profiles are extended into their corresponding complex half planes, as discussed in Section \ref{RoleOfNonreality}.
\end{proof}
\textbf{Interior equations. }

By applying $\E_0$ and the $\E_{m,k}$ to \eqref{fullConditions}(b), we obtain the system for $\uv(x)$ and the $\sigma_{m,k}(x,\theta_m)$ with the following proposition.
\begin{prop}\label{new35}
Suppose $(\phi_\uq,\phi_\ur)$ forms a normalized $\up$-resonance, with
\begin{align}\label{new39}
n_\up\phi_\up=n_\uq\phi_\uq+n_\ur\phi_\ur.
\end{align}
and that all other normalized triples of resonant phases are permutations of $(\phi_\up,\phi_\uq,\phi_\ur)$ each corresponding to one of the six rearrangements of \eqref{new39}.\\
(i) Given $\cV^0$ as in \eqref{new29} and represented by \eqref{new22}, the equation \eqref{fullConditions}(b) is equivalent to the following system:
\begin{align}\label{nc9}
\tilde L(\partial_x)\uv+\sum_{j=0}^{d-1}\sum_{m=1}^M\sum_{k,k'=1}^{\mu_m}\frac{1}{2\pi}\left(\int^{2\pi}_0\sigma_{m,k}(x,\theta_m)\partial_{\theta_m}\sigma_{m,k'}(x,\theta_m)d\theta_m\right) R_{j,m}^{k,k'}=F(0)\uv\; ;
\end{align}
\begin{align}\label{nc10}
\begin{split}
&(a)\; X_{\phi_p}\sigma_{p,l}(x,\theta_p)+\sum^{d-1}_{j=0}\sum^{\mu_p}_{k'=1}a_{p,l,j}^{k'}(\uv)\partial_{\theta_p}\sigma_{p,k'}(x,\theta_p)+\\
&(b)\;\sum^{d-1}_{j=0}\sum^{\mu_p}_{k=1}\sum^{\mu_p}_{k'=1}b_{p,l,j}^{k,k'}\sigma_{p,k}(x,\theta_p)\partial_{\theta_p}\sigma_{p,k'}(x,\theta_p)-\sum^{d-1}_{j=0}\sum^{\mu_p}_{k=1}\sum^{\mu_p}_{k'=1}b_{p,l,j}^{k,k'}\frac{1}{2\pi}\int^{2\pi}_0\sigma_{p,k}(x,\theta_p)\partial_{\theta_p}\sigma_{p,k'}(x,\theta_p)d\theta_p+\\
&(c)\; \qquad \sum^{d-1}_{j=0}J_{p,l,j}(x,\theta_p) = \sum^{\mu_p}_{k=1}e^k_{p,l}\sigma_{p,k}(x,\theta_p)\\
&\textrm{for } p\in \mathcal{I}\cup\mathcal{O},\;l\in \{1,\ldots,\mu_p\},
\end{split}
\end{align}
where
\begin{align}\label{new40}
J_{p,l,j}(x,\theta_p)= \begin{cases}
\displaystyle
  \sum^{\mu_\uq}_{k=1}\sum^{\mu_\ur}_{k'=1} c_{p,l,j}^{k,k'}\; J_{\up,n_\uq,n_\ur}^{k,k'}(x,\theta_p)+
\sum^{\mu_\ur}_{k=1}\sum^{\mu_\uq}_{k'=1} d_{p,l,j}^{k,k'}\; J_{\up,n_\ur,n_\uq}^{k,k'}(x,\theta_p),\; p=\up, \\
\displaystyle
  \sum^{\mu_\up}_{k=1}\sum^{\mu_\ur}_{k'=1} c_{p,l,j}^{k,k'}\; J_{\uq,-n_\up,n_\ur}^{k,k'}(x,\theta_p)+
\sum^{\mu_\ur}_{k=1}\sum^{\mu_\up}_{k'=1} d_{p,l,j}^{k,k'}\; J_{\uq,n_\ur,-n_\up}^{k,k'}(x,\theta_p),\; p=\uq, \\
\displaystyle
  \sum^{\mu_\uq}_{k=1}\sum^{\mu_\up}_{k'=1} c_{p,l,j}^{k,k'}\; J_{\ur,n_\uq,-n_\up}^{k,k'}(x,\theta_p)+
\sum^{\mu_\up}_{k=1}\sum^{\mu_\uq}_{k'=1} d_{p,l,j}^{k,k'}\; J_{\ur,-n_\up,n_\uq}^{k,k'}(x,\theta_p),\; p=\ur, \\
0,\; \textrm{otherwise};
 \end{cases}
\end{align}

\begin{align}\label{nc11}
\begin{split}
&(a)\; X_{\phi_p}\sigma_{p,l}(x,\theta_p)+\sum^{d-1}_{j=0}\sum^{\mu_p}_{k'=1}a_{p,l,j}^{k'}(\uv)\partial_{\theta_p}\sigma_{p,k'}(x,\theta_p)+\\
&(b)\; \sum^{d-1}_{j=0}\sum^{\mu_p}_{k=1}\sum^{\mu_p}_{k'=1}b_{p,l,j}^{k,k'}\sigma_{p,k}(x,\theta_p)\partial_{\theta_p}\sigma_{p,k'}(x,\theta_p)+\\
&(c)\; \qquad \sum^{d-1}_{j=0}J_{p,l,j}(x,\theta_p) = \sum^{\mu_p}_{k=1}e^k_{p,l}\sigma_{p,k}(x,\theta_p)\\
&\textrm{for }p\in \mathcal{P}\cup\mathcal{N},\;l\in \{1,\ldots,\mu_p\},
\end{split}
\end{align}
where
\begin{align}\label{new41}
J_{p,l,j}(x,\theta_p)= \begin{cases}
\displaystyle
  \sum^{\mu_\uq}_{k=1}\sum^{\mu_\ur}_{k'=1} c_{p,l,j}^{k,k'}\; J_{\up,n_\uq,n_\ur}^{k,k'}(x,\theta_p)+
\sum^{\mu_\ur}_{k=1}\sum^{\mu_\uq}_{k'=1} d_{p,l,j}^{k,k'}\; J_{\up,n_\ur,n_\uq}^{k,k'}(x,\theta_p),\; p=\up, \\
0,\; \textrm{otherwise}.
 \end{cases}
\end{align}
The constant vectors $R_{j,m}^{k,k'}$ appearing in \eqref{nc9} are given by
\begin{align}\label{nc12}
R_{j,m}^{k,k'}=\beta_j(\partial_u\tilde{A}_j(0)\cdot r_{m,k}) r_{m,k'}.
\end{align}
The constant scalars $b_{p,l,j}^{k,k'}$, $c_{p,l,j}^{k,k'}$, $d_{p,l,j}^{k,k'}$, $e^k_{p,l}$, and
the coefficients of the scalar linear function of $\uv$, $a_{p,l,j}^{k}(\uv)$, in \eqref{nc10}-\eqref{new41} are given by similar formulas,
but now involving dot products with the vector $\ell_{p,l}$.\\
(ii) Equations \eqref{nc9}, \eqref{nc10}, form a hyperbolic subsystem, that is, a system in $\underline{v}$ and $\sigma_{p,l}$ for $p\in\mathcal{I}\cup\mathcal{O}$ independent of $\sigma_{q,k}$ for all $q\in\mathcal{P}\cup\mathcal{N}$.
\end{prop}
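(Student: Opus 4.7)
The strategy is to apply each component of the decomposition $\bE=\bE_0+\sum_{m=1}^M\sum_{k=1}^{\mu_m}\bE_{m,k}$ from \eqref{new42} to the three pieces of \eqref{fullConditions}(b), namely $\tilde L(\partial_x)\cV^0$, $\cM(\cV^0)\partial_\theta\cV^0$, and $F(0)\cV^0$, under the standing assumption $\bE\cV^0=\cV^0$, which forces the representation \eqref{new22}. Since the ranges of $\bE_0$ and the various $\bE_{m,k}$ are mutually independent, \eqref{fullConditions}(b) is equivalent to the family of scalar equations obtained by applying these components one at a time. Two preliminary facts are used throughout: (a) distinct phases $\phi_p,\phi_r$ are linearly independent over $\mathbb{Q}$ (because $\beta\neq 0$ and the $\uomega_m$ are distinct), so no $2$-phase resonance occurs; (b) since $\cM$ is linear in its argument, writing $\cV^0=\uv+W$ with $W=\sum_{m,k}\sigma_{m,k}r_{m,k}$ yields $\cM(\cV^0)\partial_\theta\cV^0=\cM(\uv)\partial_\theta W+\cM(W)\partial_\theta W$.

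Applying $\bE_0$ (the projection onto the $\theta$-mean), the transport and source terms produce $\tilde L(\partial_x)\uv$ and $F(0)\uv$ respectively. The mean of $\cM(\uv)\partial_\theta W$ vanishes since each $\partial_{\theta_m}\sigma_{m,k}$ has zero mean; in $\cM(W)\partial_\theta W$ the cross-phase products $\sigma_{m',k}(\theta_{m'})\partial_{\theta_m}\sigma_{m,k'}(\theta_m)$ with $m\neq m'$ have zero double mean, so only self-interactions survive, contributing with the coefficient $R_{j,m}^{k,k'}=\beta_j(\partial_u\tilde A_j(0)r_{m,k})r_{m,k'}$. This produces \eqref{nc9}; for elliptic $m$ the one-sided spectrum $Z_m$ forces the self-interaction mean to vanish, so those summands are zero.

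Next, for each $p$ and $l$ I apply $\bE_{p,l}$. The transport piece is $(X_{\phi_p}\sigma_{p,l})r_{p,l}$ by Lemma \ref{b21}, and the source piece gives $\sum_k e^k_{p,l}\sigma_{p,k}$ with $e^k_{p,l}=\ell_{p,l}\cdot F(0)r_{p,k}$. The main work is the evaluation of $\bE_{p,l}(\cM(\cV^0)\partial_\theta\cV^0)$: since $\bE_{p,l}$ retains only modes $e^{i\alpha\cdot\theta}$ with $\alpha\in\cC_p$, and fact (a) above rules out any $\alpha\in Z^{M;2}$ mixing $\phi_p$ with exactly one other phase, the contributing products fall into three types. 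First, $\cM(\uv)\partial_{\theta_p}(\sigma_{p,k'}r_{p,k'})$ is already a pure $\theta_p$-mode and yields the linear-in-$\uv$ terms $a^{k'}_{p,l,j}(\uv)\partial_{\theta_p}\sigma_{p,k'}$ of \eqref{nc10}(a) and \eqref{nc11}(a). Second, self-interactions $\sigma_{p,k}\partial_{\theta_p}\sigma_{p,k'}$ give the $b^{k,k'}_{p,l,j}$ terms; for hyperbolic $p$ the $\theta_p$-mean must be subtracted since it is already captured by $\bE_0$, producing \eqref{nc10}(b), whereas for elliptic $p$ the one-sided spectrum $Z_p$ kills the mean and no subtraction appears in \eqref{nc11}(b). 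Third, cross-phase products $\sigma_{q,k}(\theta_q)\partial_{\theta_r}\sigma_{r,k'}(\theta_r)$ with $q,r\neq p$ and $(\phi_q,\phi_r)$ a $p$-resonance yield, via Proposition \ref{rename2}, the prepared integrals of \eqref{new40}, \eqref{new41}; under the hypothesis that all normalized resonance triples are permutations of $(\phi_\up,\phi_\uq,\phi_\ur)$, the sum collapses to the cases $p\in\{\up,\uq,\ur\}$, and the permutation-dependent orderings in, e.g., $J_{\uq,-n_\up,n_\ur}$ arise from rewriting \eqref{new39} as $n_\uq\phi_\uq=-n_\up\phi_\up+n_\ur\phi_\ur$ before invoking Proposition \ref{rename2}. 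Reading off the corresponding dot products with $\ell_{p,l}$ yields the scalars $b,c,d$ and completes \eqref{nc10} and \eqref{nc11}.

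For part (ii), Proposition \ref{new24}(i) forces any $p$-resonance with $p\in\cI\cup\cO$ to be formed by a pair of hyperbolic phases, so the prepared integrals in \eqref{nc10}(c) are built only from hyperbolic profiles; together with (a) and (b) of \eqref{nc10}, the equation for $\sigma_{p,l}$ with $p\in\cI\cup\cO$ depends only on $\uv$ and hyperbolic profiles. Since elliptic self-interaction means vanish in \eqref{nc9} as noted above, \eqref{nc9} has the same property, so \eqref{nc9}--\eqref{nc10} is closed in $(\uv,\{\sigma_{p,l}\}_{p\in\cI\cup\cO})$. The main obstacle is the careful enumeration and bookkeeping of the nonlinear contributions to $\bE_{p,l}$, in particular reconciling the mean subtraction in \eqref{nc10}(b) with \eqref{nc9} and matching the signs and orderings of $(n_\up,n_\uq,n_\ur)$ as $p$ ranges over the six permutations of the resonance triple.
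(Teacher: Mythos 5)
Your proposal follows essentially the same route as the paper: apply $\bE_0$ and the $\bE_{p,l}$ from the decomposition \eqref{new42} to \eqref{fullConditions}(b), use Lemma \ref{b21} for the transport terms and Proposition \ref{rename2} for the prepared integrals, exploit the one-sided spectra $Z_m$ to account for the hyperbolic/elliptic differences (mean subtraction in \eqref{nc10}(b) versus \eqref{nc11}(b), vanishing of elliptic self-interaction means in \eqref{nc9}), and invoke Proposition \ref{new24} for part (ii) — the paper's own proof is simply terser, deferring most of this bookkeeping to the analogy with \cite{cgw}. One cosmetic slip: the rearrangement underlying $J_{\uq,-n_\up,n_\ur}$ should be $-n_\uq\phi_\uq=-n_\up\phi_\up+n_\ur\phi_\ur$ (equivalently $n_\uq\phi_\uq=n_\up\phi_\up-n_\ur\phi_\ur$), not $n_\uq\phi_\uq=-n_\up\phi_\up+n_\ur\phi_\ur$.
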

\begin{rem}
\textup{While the hyperbolic interior equations are independent of the elliptic profiles, the elliptic interior equations \eqref{nc11} are not in general independent of the hyperbolic profiles, since these appear in the elliptic interaction integrals if some phase paired with a hyperbolic phase forms an elliptic resonance.}
\end{rem}

\begin{proof}[Proof of Proposition \ref{new35}]
Regarding (i), \eqref{nc9} follows from application of $\bE_0$ to \eqref{fullConditions}(b), and \eqref{nc10} from application of $\bE_{p,l}$ to \eqref{fullConditions}(b). One similarly arrives at \eqref{nc11}: in doing so, one finds that the second triple sum appearing in \eqref{nc10}(b) is zero for $p\in\mathcal{P}\cup\mathcal{N}$, since then $\textrm{spec }\sigma_{p,k}$, $\textrm{spec }\sigma_{p,k'}\subset Z_p\setminus 0$ which is either $\Z^+\setminus 0$ or $\Z^-\setminus 0$, resulting in \eqref{nc11}(b). The differences between \eqref{new40} and \eqref{new41} merely account for the fact that for $p\in\mathcal{I}\cup\mathcal{O}$, we have $n_p\in Z_p\setminus 0=\Z\setminus 0$ implies $-n_p\in Z_p\setminus 0=\Z\setminus 0$, while for $p\in\mathcal{P}\cup\mathcal{N}$, we have $n_p\in Z_p\setminus 0$ implies $-n_p\notin Z_p\setminus 0$. The reverse implication of (i) follows upon recalling the decomposition \eqref{new42}.\\
For proof of (ii), first we show the elliptic profiles drop out of \eqref{nc9}. This follows from the argument made in proving (i) that terms such as those in the triple sum of \eqref{nc9} with $m\in\mathcal{P}\cup\mathcal{N}$ are zero, since then $\textrm{spec }\sigma_{m,k}$, $\textrm{spec }\sigma_{m,k'}\subset Z_p\setminus 0$. Finally, to see that no $\sigma_{q,k}$ with $q\in\mathcal{P}\cup\mathcal{N}$ appears in \eqref{nc10}, we have only to check the interaction integral terms $J_{p,l,j}$ of \eqref{nc10}(c). However, the terms of \eqref{new40} defining $J_{p,l,j}$ involve only profiles corresponding to $p$-resonant phases; $J_{p,l,j}$ is only nonzero for some $p\in\mathcal{I}\cup\mathcal{O}$ if $p=\up$, $\uq$, or $\ur$ and, by Proposition \ref{new24}, $\up$, $\uq$, $\ur\in\mathcal{I}\cup\mathcal{O}$. Thus, in this case, no elliptic profiles appear in $J_{p,l,j}$.
\end{proof}

From the above, we find the hyperbolic subsystem has the same form as the system in \cite{cgw} and we will solve it similarly. On the other hand, we will not solve the elliptic interior equations exactly.

\textbf{Boundary equations. }

Now that we have the interior equations for the profiles, we formulate the boundary conditions to be imposed on the profiles. From \eqref{fullConditions}(c), we get
\begin{align}\label{finalBCs}
\begin{split}
&(a)\;B(0)\uv=\uG(x')\\
&(b)\;B(0)\cV^{0*}(x',0,\theta_0,\dots,\theta_0)=\\
&\qquad B(0)\left(\sum_{m\in\cI\cup\cP\cup\cN}\sum^{\nu_{k_m}}_{k=1}\sigma_{m,k}(x',0,\theta_0)r_{m,k}+\sum_{m\in\cO}\sum^{\nu_{k_m}}_{k=1}\sigma_{m,k}(x',0,\theta_0)r_{m,k}\right)=G^*(x',\theta_0),
\end{split}
\end{align}
where $\cV^{0*}$ denotes the mean zero part of the periodic function $\cV^0$, and we have similar for $G^*$. While \eqref{finalBCs}(a) gives boundary data for $\uv$, it remains to establish satisfactory boundary conditions on the individual profiles $\sigma_{m,k}$ from \eqref{finalBCs}(b). This is done with Proposition \ref{rename3}.

\begin{lem}\label{bases}
Each of the sets of vectors
\begin{align}\label{posSpecBasis}
\{B(0)r_{m,k}:m\in\cI\cup\cP,\,k=1,\ldots,\mu_m\},
\end{align}
\begin{align}\label{negSpecBasis}
\{B(0)r_{m,k}:m\in\cI\cup\cN,\,k=1,\ldots,\mu_m\},
\end{align}
is a basis of $\C^p$.
\end{lem}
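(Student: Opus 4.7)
The plan is to deduce both claims from Lemma \ref{SubspaceDecompLem}, the uniform stability Assumption \ref{assumption3}, and the reality of the matrix $B(0)$. For the first set, Lemma \ref{SubspaceDecompLem} provides the direct sum decomposition $\E^s(\utau,\ueta)=\oplus_{m\in\cI\cup\cP}\textrm{Ker }L(d\phi_m)$, and the chosen $r_{m,k}$ give a basis of each summand by construction, so their union indexed by $m\in\cI\cup\cP$ will be a basis of $\E^s(\utau,\ueta)$. I would then invoke uniform stability to conclude that $B(0)\colon\E^s(\utau,\ueta)\to\C^p$ is a $\C$-linear isomorphism, and therefore applying $B(0)$ to this basis yields the asserted basis of $\C^p$.

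For the second set, I would use complex conjugation to transfer the first statement. By Remark \ref{modeProps}(ii), since the $A_j(0)$ are real, conjugation restricts to a $\C$-antilinear bijection from $\textrm{Ker }L(d\phi_m)$ onto $\textrm{Ker }L(d\phi_{m'})$ whenever $\uomega_{m'}=\overline{\uomega}_m$. For $m\in\cI$ the eigenvalue $\uomega_m$ is real, so conjugation preserves $\textrm{Ker }L(d\phi_m)$ itself; for elliptic indices it pairs each $m\in\cP$ with a unique $m'\in\cN$ of matching multiplicity. The net effect is that conjugation furnishes a $\C$-antilinear bijection between $\E^s(\utau,\ueta)$ and $\oplus_{m\in\cI\cup\cN}\textrm{Ker }L(d\phi_m)$.

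To close, I would exploit the identity $B(0)\overline{v}=\overline{B(0)v}$ on $\C^N$, valid because $B(0)$ is real. Given $w\in\C^p$, setting $v=(B(0)|_{\E^s})^{-1}\overline{w}$ (which exists by uniform stability) and $u=\overline{v}$ produces an element $u\in\oplus_{m\in\cI\cup\cN}\textrm{Ker }L(d\phi_m)$ with $B(0)u=\overline{B(0)v}=w$; this shows $B(0)$ is a $\C$-linear isomorphism from $\oplus_{m\in\cI\cup\cN}\textrm{Ker }L(d\phi_m)$ onto $\C^p$, and applying it to the basis $\{r_{m,k}\}_{m\in\cI\cup\cN,\,k=1,\ldots,\mu_m}$ of the direct sum produces the desired basis. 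I do not anticipate a substantive obstacle here; the only step requiring care will be the conjugation bookkeeping to confirm that $\oplus_{m\in\cI\cup\cN}\textrm{Ker }L(d\phi_m)$ really is the conjugate of $\E^s(\utau,\ueta)$, which reduces immediately to Remark \ref{modeProps}(ii).
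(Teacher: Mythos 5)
Your proposal is correct and follows essentially the same route as the paper: the first basis comes from Lemma \ref{SubspaceDecompLem} combined with the uniform stability assumption, and the second is obtained by conjugation via Remark \ref{modeProps}(ii) together with the reality of $B(0)$. Your version merely makes the final step slightly more explicit (showing $B(0)$ is an isomorphism on $\oplus_{m\in\cI\cup\cN}\textrm{Ker }L(d\phi_m)$ by exhibiting preimages), where the paper identifies the image set $\{B(0)r_{m,k}\}_{m\in\cI\cup\cN}$ with the conjugates of the first basis and concludes it spans $\C^p$.
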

\begin{proof}
First we prove \eqref{posSpecBasis} is a basis of $\C^p$. Recall our assumption of uniform stability, Assumption \eqref{assumption3}, which tells us that $B(0)$ maps a basis for the stable subspace $\E^s(\utau,\ueta)$ to a basis for $\C^p$. Thus, according to Lemma \ref{SubspaceDecompLem}, which states that
\begin{align}
\E^s(\utau,\ueta)=\oplus_{m\in\cI\cup\cP}\textrm{Ker }L(d\phi_m),
\end{align}
we have that the set \eqref{posSpecBasis} is a basis for $\C^p$. To see the same holds for \eqref{negSpecBasis}, recall from Remark \ref{modeProps}(ii) the fact that for each nonreal $\uomega_m$, say, without loss of generality, $m\in\cP$, there is another eigenvalue $\uomega_{m'}$ satisfying $\overline{\uomega}_{m'}=\uomega_m$, thus with $m'\in\cN$, and that similar holds for the associated eigenvectors. It follows that
\begin{eqnarray}
\{B(0)r_{m,k}:m\in\cI\cup\cN,\,k=1,\ldots,\mu_m\}&=&\{B(0)\overline{r}_{m,k}:m\in\cI\cup\cP,\,k=1,\ldots,\mu_m\}\\
&=&\{\overline{B(0)r_{m,k}}:m\in\cI\cup\cP,\,k=1,\ldots,\mu_m\}.\label{conjBasis}
\end{eqnarray}
Clearly, \eqref{conjBasis} spans the same subspace as \eqref{posSpecBasis}, i.e. $\C^p$, and so \eqref{negSpecBasis} is also a basis of $\C^p$.
\end{proof}
\begin{prop}\label{rename3}
The data $(\sigma_{m,k}(x',0,\theta_0);m\in \mathcal{I}\cup \mathcal{P} \cup \mathcal{N},k\in\{1,\dots,\mu_m\})$ are determined by the data  $(\sigma_{m,k}(x',0,\theta_0);m\in \mathcal{O},k\in\{1,\dots,\mu_m\})$ in that there exist constant matrices $\mathbb{M}^\pm$ such that the zero-mean boundary condition,
\begin{align}\label{new43}
B(0)\mathcal{V}^{0*}(x',0,\theta_0,\ldots,\theta_0)
=B(0)\left(\sum^M_{m=1}\sum^{\mu_m}_{k=1}\sigma_{m,k,n}(x',0,\theta_0)r_{m,k}\right)
=G^*(x',\theta_0),
\end{align}
is equivalent to the condition
\begin{align}\label{c10}
(\sigma^\pm_{m,k}(x',0,\theta_0);m\in \mathcal{I}\cup\mathcal{P}\cup\mathcal{N},k\in\{1,\dots,\mu_m\})
=\mathbb{M}^\pm(G^{*,\pm},\sigma^\pm_{m,k}(x',0,\theta_0);m\in \mathcal{O},k\in\{1,\dots,\mu_m\}),
\end{align}
(using $+$ $(-)$ to denote a part with positive (negative) spectrum.)
\end{prop}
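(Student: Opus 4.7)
The plan is to decompose the zero-mean boundary condition \eqref{new43} into its positive-spectrum and negative-spectrum parts in $\theta_0$, and then exploit Lemma~\ref{bases} to invert the resulting linear systems for $(\sigma^\pm_{m,k}(x',0,\theta_0); m\in\mathcal{I}\cup\mathcal{P}\cup\mathcal{N})$ in terms of the $(\sigma^\pm_{m,k}(x',0,\theta_0); m\in\mathcal{O})$ and $G^{*,\pm}$. Concretely, since each $\sigma_{m,k}(x',0,\theta_0)$ is the restriction of $\sigma_{m,k}(x,\theta_m)$ to $x_d=0$, $\theta_m=\theta_0$, its Fourier spectrum in $\theta_0$ lies in $Z_m\setminus 0$. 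Applying the projection $(\cdot)^+$ onto positive Fourier modes and $(\cdot)^-$ onto negative Fourier modes to \eqref{new43} therefore produces two decoupled linear equations in $\theta_0$.

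In the $+$ equation, the contribution from any elliptic profile with $m\in\mathcal{N}$ vanishes, since $Z_m=\mathbb{Z}^-$ gives $\sigma_{m,k}^+=0$; symmetrically, for the $-$ equation the contributions from $m\in\mathcal{P}$ vanish. Consequently the $+$ part of \eqref{new43} reads
\begin{equation*}
B(0)\!\!\sum_{m\in\mathcal{I}\cup\mathcal{P}}\sum_{k=1}^{\mu_m}\sigma_{m,k}^+(x',0,\theta_0)\,r_{m,k}=G^{*,+}(x',\theta_0)-B(0)\!\!\sum_{m\in\mathcal{O}}\sum_{k=1}^{\mu_m}\sigma_{m,k}^+(x',0,\theta_0)\,r_{m,k},
\end{equation*}
and the $-$ part reads analogously with $\mathcal{P}$ replaced by $\mathcal{N}$. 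The right-hand sides in each case are built entirely from $G^{*,\pm}$ and from $(\sigma_{m,k}^{\pm};m\in\mathcal{O})$, so it only remains to invert the linear operators on the left.

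Here is where Lemma~\ref{bases} enters: the families $\{B(0)r_{m,k}:m\in\mathcal{I}\cup\mathcal{P},\,k=1,\dots,\mu_m\}$ and $\{B(0)r_{m,k}:m\in\mathcal{I}\cup\mathcal{N},\,k=1,\dots,\mu_m\}$ are each bases of $\mathbb{C}^p$. Expanding each side in the appropriate basis, Fourier mode by Fourier mode in $\theta_0$, we read off the components $(\sigma_{m,k}^+(x',0,\theta_0);m\in\mathcal{I}\cup\mathcal{P})$ (respectively $(\sigma_{m,k}^-(x',0,\theta_0);m\in\mathcal{I}\cup\mathcal{N})$) as explicit linear combinations of the components of $G^{*,\pm}$ and of the outgoing boundary traces $(\sigma_{m,k}^\pm;m\in\mathcal{O})$. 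The constant matrices $\mathbb{M}^+$ and $\mathbb{M}^-$ are then defined as the change-of-basis matrices encoding these linear combinations, augmented by the trivial assignments $\sigma_{m,k}^-\equiv 0$ for $m\in\mathcal{P}$ and $\sigma_{m,k}^+\equiv 0$ for $m\in\mathcal{N}$. The reverse direction, that \eqref{c10} combined with these trivial vanishings implies \eqref{new43}, is immediate by summing the two decompositions back together.

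I do not foresee a serious obstacle: once the spectrum bookkeeping has isolated the correct unknowns for each $\pm$ equation, the proposition reduces to linear algebra provided by Lemma~\ref{bases}. The only point requiring care is verifying that the elliptic profiles genuinely contribute to only one of the two $\pm$ systems (so that the inversion stays within the basis supplied by Lemma~\ref{bases}); this is handled by appealing to the definitions of $Z_m$ and the extension conventions of Section~\ref{RoleOfNonreality}, and is essentially already contained in the holomorphic-extension discussion there.
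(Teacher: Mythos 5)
Your proposal is correct and follows essentially the same route as the paper: split \eqref{new43} into positive and negative Fourier modes in $\theta_0$, note that $\mathcal{N}$-profiles (resp.\ $\mathcal{P}$-profiles) drop out of the positive (resp.\ negative) part, and invert using the two bases of $\mathbb{C}^p$ supplied by Lemma~\ref{bases} to define $\mathbb{M}^\pm$. No meaningful differences from the paper's argument.
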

\begin{proof}
We rewrite \eqref{new43} as
\begin{align}\label{new44}
\begin{split}
& B(0)\cV^{0*}(x',0,\theta_0,\dots,\theta_0)=\\
&\qquad B(0)\left(\sum_{m\in\cI\cup\cP\cup\cN}\sum^{\mu_m}_{k=1}\sigma_{m,k}(x',0,\theta_0)r_{m,k}+\sum_{m\in\cO}\sum^{\mu_m}_{k=1}\sigma_{m,k}(x',0,\theta_0)r_{m,k}\right)=G^*(x',\theta_0).
\end{split}
\end{align}
Recall $\mathcal{P}$-profiles have positive spectra and $\mathcal{N}$-profiles have negative spectra. Thus, using the subscript $n$ to denote the $n$th term in a Fourier series, we get for $n>0$,
\begin{align}\label{c8}
B(0)\left(\sum_{m\in \mathcal{I}\cup \mathcal{P}}\sum^{\mu_m}_{k=1}\sigma_{m,k,n}(x',0,\theta_0)r_{m,k}\right)=G^*_n(x',\theta_0)
-B(0)\left(\sum_{m\in \mathcal{O}}\sum^{\mu_m}_{k=1}\sigma_{m,k,n}(x',0,\theta_0)r_{m,k}\right),
\end{align}
and for $n<0$,
\begin{align}\label{c9}
B(0)\left(\sum_{m\in \mathcal{I}\cup \mathcal{N}}\sum^{\mu_m}_{k=1}\sigma_{m,k,n}(x',0,\theta_0)r_{m,k}\right)=G^*_n(x',\theta_0)
-B(0)\left(\sum_{m\in \mathcal{O}}\sum^{\mu_m}_{k=1}\sigma_{m,k,n}(x',0,\theta_0)r_{m,k}\right).
\end{align}
By Lemma \ref{bases}, both sets $\{B(0)r_{m,k}:k\in\{1,\dots,\mu_m\},m\in \mathcal{I} \cup \mathcal{P}\}$, $\{B(0)r_{m,k}:k\in\{1,\dots,\mu_m\},m\in \mathcal{I} \cup \mathcal{N}\}$ are bases for $\C^p$, and the desired result follows from this.
\end{proof}

So we now have a sub-system (\eqref{nc9}, \eqref{nc10}) in just the hyperbolic profiles $\sigma_{m,k}$, $m\in\cI\cup\cO$, and the mean $\uv$ with boundary data for $\uv$ and determination of $\mathcal{I}$-boundary data from $\mathcal{O}$-boundary data as in \eqref{finalBCs}(b), where \eqref{finalBCs}(b) is equivalent to \eqref{c10}.\\
\begin{rem}
\textup{We will seek $\uv$, $\sigma_{p,l}$ such that, in addition to the equations above, we satisfy the initial conditions
\begin{align}\label{r8}
\uv=0\;\textrm{ and }\sigma_{p,l}=0\textrm{ in }t\leq 0\textrm{ for all }p,l.
\end{align}
The large system consists of the interior equations \eqref{nc9}-\eqref{new41} with boundary equations \eqref{finalBCs} and the initial conditions \eqref{r8}.}
\end{rem}
\subsection{Solution of the hyperbolic sub-system of the large system. }\label{largehypsolution}

First we will obtain the `hyperbolic part' of our solution, $\cV^0_h$, which will satisfy, in place of \eqref{fullConditions},
\begin{align}\label{hypConditions}
\begin{split}
&a)\;\bE_h\cV^0_h=\cV^0_h\\
&b)\;\bE_h\left(\tilde{L}(\partial_x)\cV^0_h+\cM(\cV^0_h)\partial_{\theta}\cV^0_h\right)=\bE_h(F(0)\cV^0_h)\text{ in }x_d\geq 0\\
&c)\;B(0)\cV^0_h(x',0,\theta_0,\dots,\theta_0)=G(x',\theta_0)\\
&d)\;\cV^0_h=0\text{ in }t<0.
\end{split}
\end{align}
Condition \eqref{hypConditions}(a) serves the same purpose as \eqref{fullConditions}(a), but adds the restriction that $\cV^0_h$ only consists of the mean and the hyperbolic profiles. Thus, recalling the form for $\cV^0$, we see $\cV^0_h$ takes the form
\begin{align}\label{hypForm}
\cV^0_h(x,\theta_1,\ldots,\theta_M)=\uv(x)+\sum^M_{m\in\cI\cup\cO}\sum^{\mu_m}_{k=1}\sigma_{m,k}(x,\theta_m)r_{m,k}.
\end{align}
The condition \eqref{hypConditions}(b) is exactly the hyperbolic sub-system (\eqref{nc9},\eqref{nc10}) found in Proposition \ref{new35}. This follows directly from Proposition \ref{new35} and the definition of $\E_h$. Since $\cV^0_h$ has the form \eqref{hypForm}, the boundary condition \eqref{hypConditions}(c) becomes \eqref{c10}, ignoring the appearance of the elliptic profiles ($\sigma_{m,k}$ with $m\in\cP\cup\cN$) in the left hand side.

To solve this, we employ the same approach as that which is used in \cite{cgw} to solve the full system of profile equations. This is done by solving the system with an iteration scheme such as the following
\begin{align}\label{hypConditionsIt}
\begin{split}
&a)\;\bE_h\cV^{0,n}_h=\cV^{0,n}_h\\
&b)\;\bE_h\left(\tilde{L}(\partial_x)\cV^{0,n}_h+\cM(\cV^{0,n-1}_h)\partial_{\theta}\cV^{0,n}_h\right)=\bE_h(F(0)\cV^{0,n-1}_h)\text{ in }x_d\geq 0\\
&c)\;B(0)\cV^{0,n}_h(x',0,\theta_0,\dots,\theta_0)=G(x',\theta_0)\\
&d)\;\cV^{0,n}_h=0\text{ in }t<0,
\end{split}
\end{align}
where we note that \eqref{hypConditionsIt}(a) means $\cV^{0,n}_h$ is of the form
\begin{align}\label{hypFormIt}
\cV^{0,n}_h(x,\theta_1,\ldots,\theta_M)=\uv^n(x)+\sum^M_{m\in\cI\cup\cO}\sum^{\mu_m}_{k=1}\sigma^n_{m,k}(x,\theta_m)r_{m,k},
\end{align}
where iterates $\uv^n(x)$, $\sigma^n_{m,k}(x,\theta_m)$ for $m\in\cI\cup\cO$ satisfy an iterated version of the hyperbolic sub-system of the profile equations, (\eqref{nc9},\eqref{nc10}) which is encapsulated by \eqref{hypConditionsIt}(b).

The following proposition gives the solution to the iterated system as well as the solution to the hyperbolic sub-system itself. We omit the proofs of these as they are almost identical to the proof of Proposition 2.19 together with that of Proposition 2.21 from \cite{cgw}.
\begin{prop}\label{cgwprops}

Let $T>0$, $m>\frac{d+3}{2}+1$ and suppose that $G(x',\theta_0)\in H^m_T$.

(i)  Setting $\cV^{0,0}_h=0$, there exist unique iterates $\cV^{0,n}_h\in \bH^m_T$, $n\geq 1$, solving the system \eqref{hypConditionsIt}.

(ii) For some $0<T_0\leq T$ the system \eqref{hypConditions} has a unique solution $\cV^0_h\in \bH^m_{T_0}$. Furthermore,
\begin{align}
\lim_{n\to\infty}\cV^{0,n}_h = \cV^0_h\textrm{ in }\bH^{m-1}_{T_0},
\end{align}
and the traces $\cV^0_h|_{x_d=0}$, $\uv|_{x_d=0}$, and $\sigma_{p,l}|_{x_d=0}$, $p\in\cI\cup\cO$, all lie in $H^m_{T_0}$.
\end{prop}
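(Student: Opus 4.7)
The plan is to adapt the iteration arguments of Propositions 2.19 and 2.21 of \cite{cgw} essentially verbatim. The crucial structural observation, already recorded in Proposition \ref{new35}(ii), is that once \eqref{hypConditionsIt}(b) is projected with $\bE_0$ and with $\bE_{p,l}$ for $p\in\cI\cup\cO$, the equations for $\uv^n$ and for the hyperbolic profiles $\sigma^n_{m,k}$, $m\in\cI\cup\cO$, close up into a system identical in form to the full system of \cite{cgw}: no elliptic profile ever appears. Since \eqref{hypConditionsIt}(b) is linear in $\cV^{0,n}_h$ with coefficients and forcing involving only $\cV^{0,n-1}_h$, the $n$-th iterate is obtained by solving a linear problem.

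For fixed $\cV^{0,n-1}_h\in\bH^m_T$, the $\bE_0$-component of \eqref{hypConditionsIt}(b) is a linear hyperbolic boundary value problem for $\uv^n$ with forcing in $H^m_T$, boundary condition $B(0)\uv^n|_{x_d=0}=\uG$, and zero Cauchy data in $\{t<0\}$; well-posedness in $H^m_T$ follows from Assumption \ref{assumption3} together with standard Kreiss theory. Each $\bE_{p,l}$-component, $p\in\cI\cup\cO$, reduces by Lemma \ref{b21} to a scalar linear transport equation along the real characteristic vector field $X_{\phi_p}$, coupled for each fixed $p$ across $l=1,\ldots,\mu_p$ by zero-order terms, with source depending polynomially on $\cV^{0,n-1}_h$ and its $\theta$-derivative through the interaction integrals $J_{p,l,j}$ of \eqref{new40} evaluated at the previous iterate.

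The transports are solved in the order forced by Definition \ref{def2} and \eqref{b24aa}. For $p\in\cO$ one has $\partial_{\xi_d}\lambda_{k_p}<0$, so backward characteristics of $X_{\phi_p}$ from any interior point with $t\geq 0$ remain in the closed half-space until they reach $\{t=0\}$, where the unknown vanishes; hence each outgoing profile $\sigma^n_{p,l}$ is determined with no boundary condition. Restriction to $\{x_d=0\}$ then supplies, through Lemma \ref{bases} and Proposition \ref{rename3} (i.e.\ \eqref{c10}), the boundary data for the incoming profiles $\sigma^n_{p,l}$, $p\in\cI$, which in turn are recovered by integrating $X_{\phi_p}$ from their prescribed boundary values and zero Cauchy data. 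Combined with $\uv^n$, this yields $\cV^{0,n}_h\in\bH^m_T$. Energy estimates for the hyperbolic boundary value problem and for the scalar transport equations, combined with Moser product inequalities and the tame structure of $J_{p,l,j}$ from Proposition \ref{rename2}, produce a uniform-in-$n$ bound on $|\cV^{0,n}_h|_{\bH^m_{T_0}}$ on a possibly shortened interval $[-\infty,T_0]$; applied to $\cV^{0,n+1}_h-\cV^{0,n}_h$, which satisfies a linear iterated system of identical form with source controlled by $\cV^{0,n}_h-\cV^{0,n-1}_h$, the same machinery gives contraction in $\bH^{m-1}_{T_0}$ after possibly shrinking $T_0$. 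The limit $\cV^0_h\in\bH^m_{T_0}$ then solves \eqref{hypConditions}, the traces $\cV^0_h|_{x_d=0}$, $\uv|_{x_d=0}$, $\sigma_{p,l}|_{x_d=0}$ lie in $H^m_{T_0}$ by the standard trace theory, and uniqueness is a direct energy estimate for the difference of two solutions.

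The main obstacle is extracting uniform $\bH^m_{T_0}$ control of the quadratic interaction integrals $J_{p,l,j}$ without losing more than one $\theta$-derivative, since the scalar transport equations themselves already consume one derivative in the energy estimate. Proposition \ref{rename2}, via the preparation map \eqref{c3}, is precisely what makes this possible: it exhibits each $J_{p,l,j}$ as a Fourier series with an explicit loss of at most one $\theta$-derivative, after which a Moser product inequality closes the estimate. With this tame bound in hand the hyperbolic argument of \cite{cgw} transfers without change, which is why the detailed calculation is omitted.
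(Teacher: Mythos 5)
Your overall strategy coincides with the paper's: the paper gives no independent proof of this proposition, stating only that it is obtained by repeating the proofs of Propositions 2.19 and 2.21 of \cite{cgw}, and the closure of the hyperbolic subsystem that you invoke is exactly Proposition \ref{new35}(ii). So in spirit your proposal is the intended argument.

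However, one step of your sketch misdescribes the structure of the iteration \eqref{hypConditionsIt} and would fail as written. You treat the interaction integrals $J_{p,l,j}$ as a source ``evaluated at the previous iterate,'' and on that basis propose to solve the $n$-th step sequentially: first the outgoing profiles (no boundary condition needed), then the incoming ones from the boundary relation \eqref{c10}. But in \eqref{hypConditionsIt}(b) the quasilinear term is $\cM(\cV^{0,n-1}_h)\partial_\theta\cV^{0,n}_h$, so when $\bE_{p,l}$ is applied the resulting prepared integrals have the form $(\sigma^{n-1}_{q,k})_{n_q}\,\partial_{\theta_r}\sigma^{n}_{r,k'}$ (compare \eqref{b1c} in the elliptic analogue): the differentiated factor is the \emph{current} iterate. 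Whenever a hyperbolic resonance $n_{\up}\phi_{\up}=n_{\uq}\phi_{\uq}+n_{\ur}\phi_{\ur}$ exists — the resonant situation that is the whole point of the proposition — the transport equation for an outgoing $\sigma^{n}_{\up,l}$ therefore contains $\partial_{\theta_\ur}\sigma^{n}_{\ur,k'}$ with $\ur$ possibly incoming, and vice versa, so the current iterates of the different hyperbolic phases (and $\uv^n$, through the averaged terms in \eqref{nc9}) are genuinely coupled and cannot be solved one family at a time. The correct procedure, as in \cite{cgw}, is to regard the $n$-th step as a single coupled linear system (the boundary value problem for $\uv^n$, the transport equations for all hyperbolic $\sigma^n_{m,k}$, the nonlocal interaction operators bounded via Proposition \ref{rename2}, and the boundary coupling \eqref{c10}) and to prove a joint a priori estimate for it; existence, the uniform $\bH^m$ bounds, and the contraction in $\bH^{m-1}_{T_0}$ then proceed as you indicate. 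If you replace the sequential outgoing/incoming construction by this coupled linear estimate, your proof matches the cited argument.
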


\subsection{Approximate solution of the equations for the elliptic profiles. }\label{largeappsolution}

Recall from Proposition \ref{new35} the elliptic interior equations:
\begin{align}\label{ellipIntEqn}
\begin{split}
&(a)\; X_{\phi_p}\sigma_{p,l}(x,\theta_p)+\sum^{d-1}_{j=0}\sum^{\mu_p}_{k'=1}a_{p,l,j}^{k'}(\uv)\partial_{\theta_p}\sigma_{p,k'}(x,\theta_p)+\\
&(b)\; \sum^{d-1}_{j=0}\sum^{\mu_p}_{k=1}\sum^{\mu_p}_{k'=1}b_{p,l,j}^{k,k'}\sigma_{p,k}(x,\theta_p)\partial_{\theta_p}\sigma_{p,k'}(x,\theta_p)+\\
&(c)\; \qquad \sum^{d-1}_{j=0}J_{p,l,j}(x,\theta_p) = \sum^{\mu_p}_{k=1}e^k_{p,l}\sigma_{p,k}(x,\theta_p)\\
&\textrm{for }p\in \mathcal{P}\cup\mathcal{N},\;l\in \{1,\ldots,\mu_p\}.
\end{split}
\end{align}
These complex transport equations may not generally have exact solutions. Instead, our elliptic profiles $\sigma_{p,l}$ will approximately solve these equations in the sense that they will hold at the boundary $x_d=0$. It is sufficient to simply evaluate the above expression at $x_d=0$, isolate $\partial_{x_d}\sigma_{p,l}(x',0,\theta_p)$, and require that $\sigma_{p,l}$ has the appropriate $x_d$-derivative at the boundary. Of course, we will also have to adhere to the boundary conditions, so that the trace $\sigma_{p,l}(x',0,\theta_0)$ at the boundary satisfies \eqref{finalBCs}(b), and we must satisfy initial conditions \eqref{r8}. With Proposition \ref{r2}, we construct such elliptic profiles $\sigma_{p,l}(x,\theta_p)$ by solving some wave equations in which $x_d$ plays the role of the time variable and with appropriate boundary data corresponding to $\sigma_{p,l}(x',0,\theta_p)$ and $\partial_{x_d}\sigma_{p,l}(x',0,\theta_p)$.

In addition to the exact elliptic interior equations, we consider semilinear equations in iterates $\sigma^{n}_{p,l}$ for $p\in\mathcal{P}\cup\mathcal{N}$, $l\in\{1,\ldots,\mu_p\}$, almost identical to the iteration scheme which is used to obtain the hyperbolic profiles. While we do not need elliptic iterates $\sigma^{n}_{p,l}$ to get the elliptic profiles $\sigma_{p,l}$, which are instead obtained by the process described above, they allow us to handle hyperbolic parts and elliptic parts uniformly in the error analysis. The semilinear equations in the elliptic iterates $\sigma^{n}_{p,l}$ are
\begin{align}\label{b1a}
X_{\phi_p}\sigma^{n}_{p,l}(x,\theta_p)+
\end{align}
\begin{align}\label{b1b}
\sum^{d-1}_{j=0}\sum^{\mu_p}_{k'=1}a^{k'}_{p,l,j}(\underline{v}^{n-1})\partial_{\theta_p}\sigma^{n}_{p,k'}(x,\theta_p)
+\sum^{d-1}_{j=0}\sum^{\mu_p}_{k=1}\sum^{\mu_p}_{k'=1}b^{k,k'}_{p,l,j}\sigma^{n-1}_{p,k}(x,\theta_p)\partial_{\theta_p}\sigma^{n}_{p,k'}(x,\theta_p)+
\end{align}
\begin{align}\label{b1c}
\sum^{d-1}_{j=0}\sum^{\mu_p}_{k=1}\sum^{\mu_p}_{k'=1}c^{k,k'}_{p,l,j}J^{k,k',n}_{p,n_q,n_r}(x,\theta_p)+
\sum^{d-1}_{j=0}\sum^{\mu_p}_{k=1}\sum^{\mu_p}_{k'=1}d^{k,k'}_{p,l,j}J^{k,k',n}_{p,n_r,n_q}(x,\theta_p)
\end{align}
\begin{align}\label{b1d}
=\sum^{\mu_p}_{k=1}e^k_{p,l}\sigma^{n-1}_{p,k}(x,\theta_p),\quad\textrm{for }p\in \mathcal{P}\cup\mathcal{N},\;l\in \{1,\ldots,\mu_p\},
\end{align}
where we define
\begin{align}
J^{k,k',n}_{p,n_q,n_r}(x,\theta_p)=\frac{1}{2\pi}\int^{2\pi}_0(\sigma^{n-1}_{q,k})_{n_q}\left(x,\frac{n_p}{n_q}\theta_p-\frac{n_r}{n_q}\theta_r\right)\partial_{\theta_r}\sigma^n_{r,k'}(x,\theta_r)d\theta_r,
\end{align}
and similar for $J^{k,k',n}_{p,n_r,n_q}$. Imposed on the iterates $\sigma^n_{p,l}$ are boundary conditions identical to those for the profiles $\sigma_{p,l}$, i.e. \eqref{c10} with $\sigma^n_{m,k}$ in place of each $\sigma_{m,k}$, and initial conditions
\begin{align}
\sigma^{n}_{p,l}(x)=0 \textrm{ for } t \leq 0.
\end{align}
Similar to the elliptic profiles, the elliptic iterates $\sigma^n_{p,l}$ will only approximately solve \eqref{b1a}-\eqref{b1d} in the sense that these equations will hold at $x_d=0$. In fact, the elliptic iterates $\sigma^n_{p,l}$ are obtained with the same kind of construction used for the elliptic $\sigma_{p,l}$. In addition to the elliptic profiles, their corresponding iterates are constructed in Proposition \ref{r2}.

For now, let us fix $p\in\{1,\ldots,M\},\,l\in\{1,\ldots,\mu_p\}$ and an integer $n$ and take the task of finding approximate solutions $\sigma_{p,l}$ and $\sigma^n_{p,l}$ of \eqref{ellipIntEqn} and the system \eqref{b1a}-\eqref{b1d}, respectively. First we rearrange \eqref{ellipIntEqn}, isolating the vector field applied to $\sigma_{p,l}$ in the left hand side, and denoting the resulting right hand side by $f_{p,l}$, getting something of the form
\begin{align}\label{b2exact}
X_{\phi_p}\sigma_{p,l}(x,\theta_p)=f_{p,l}(x,\theta_p).
\end{align}
Similarly, we isolate $X_{\phi_p}\sigma^n_{p,l}$ in the left hand side of \eqref{b1a}-\eqref{b1d}, and define $f^n_{p,l}(x,\theta_p)$ to be what remains on the right hand side, so \eqref{ellipIntEqn} becomes
\begin{align}\label{b2}
X_{\phi_p}\sigma^n_{p,l}(x,\theta_p)=f^n_{p,l}(x,\theta_p).
\end{align}
Now we isolate the quantities this prescribes for the traces of the $\sigma_{p,l}$ and the $\sigma^n_{p,l}$ at the boundary $x_d=0$.
\begin{definition}\label{traceDefns}
(i)\textup{ The coefficient of $\partial_{x_d}\sigma_{p,l}$ in the left hand side of \eqref{b2exact} is one, so we may isolate it in this expression. We will require this equation to hold at $x_d=0$, and define $b_{p,l}$ so that doing so is represented by the condition
\begin{align}\label{rename6exact}
\partial_{x_d}\sigma_{p,l}|_{x_d=0}=b_{p,l},
\end{align}
noting that the right hand side depends only on the boundary data $\sigma_{m,k}|_{x_d=0}$, $m=1,\ldots,M$, $k=1,\ldots,\mu_m$.
}

(ii)\textup{ We define $a_{p,l}$ such that the boundary condition on $\sigma_{p,l}$ in \eqref{c10} is equivalent to
\begin{align}\label{rename5exact}
\sigma_{p,l}|_{x_d=0}=a_{p,l},
\end{align}
noting that the right hand side has already been determined by our solution of the hyperbolic profiles. This thus determines the right hand side of \eqref{rename6exact}.
}

(iii)\textup{ We isolate $\partial_{x_d}\sigma^{n}_{p,l}$ on the left hand side of \eqref{b1a}-\eqref{b1d}, and define $b^n_{p,l}$ such that requiring this to hold at $x_d=0$ is equivalent to
\begin{align}\label{rename6}
\partial_{x_d}\sigma^n_{p,l}|_{x_d=0}=b^n_{p,l}.
\end{align}
}
(iv)\textup{ Consider
\begin{align}\label{rename4}
(\sigma^{n,\pm}_{m,k}(x',0,\theta_0);m\in \mathcal{I}\cup\mathcal{P}\cup\mathcal{N},k\in\{1,\dots,\mu_m\})
=\mathbb{M}^\pm(G^{*,\pm},\sigma^{n,\pm}_{m,k}(x',0,\theta_0);m\in \mathcal{O},k\in\{1,\dots,\mu_m\}),
\end{align}
for $\bM^\pm$ as defined in the proof of Proposition \ref{rename3}. We define $a^n_{p,l}$ such that the condition on $\sigma^n_{p,l}$ in \eqref{rename4} is equivalent to
\begin{align}\label{rename5}
\sigma^n_{p,l}|_{x_d=0}=a^n_{p,l}.
\end{align}
}
\end{definition}

\begin{lem}\label{rename7}
Suppose $G\in H^{s+1}_{T_0}$. Let $\cV^0_h\in \bH^{s+1}_{T_0}$ be the solution of the hyperbolic sub-system constructed in Proposition \ref{cgwprops}. For $a^n_{p,l}$, $b^n_{p,l}$, $a_{p,l}$, and $b_{p,l}$ as defined in Definition \ref{traceDefns}, we have $a^n_{p,l}\in H^{s+1}_{T_0}(x',\theta_p)$, $b^n_{p,l}\in H^{s}_{T_0}(x',\theta_p)$, and
\begin{align}\label{r4}
\lim_{n\rightarrow\infty}a^n_{p,l}=a_{p,l}\textrm{ in }H^{s+1}_{T_0}(x',\theta_p),
\end{align}
\begin{align}\label{r5}
\lim_{n\rightarrow\infty}b^n_{p,l}=b_{p,l}\textrm{ in }H^s_{T_0}(x',\theta_p).
\end{align}
\end{lem}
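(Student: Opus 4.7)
My plan is to transfer the regularity and convergence properties of the hyperbolic iterate traces produced by Proposition~\ref{cgwprops} through the explicit algebraic formulas that define $a^n_{p,l}$ and $b^n_{p,l}$. First I would apply Proposition~\ref{cgwprops} at the regularity index $m=s+1$, which produces iterates $\cV^{0,n}_h$ uniformly bounded in $\bH^{s+1}_{T_0}$ and converging to $\cV^0_h\in\bH^{s+1}_{T_0}$. Continuity of the trace at $\{x_d=0\}$ applied to the outgoing components then provides uniform $H^{s+1}_{T_0}(x',\theta_0)$ bounds and strong $H^{s+1}_{T_0}$ convergence for $\sigma^n_{m,k}|_{x_d=0}$ ($m\in\cO$) and $\uv^n|_{x_d=0}$.

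Next, I would appeal to Definition~\ref{traceDefns}(iv) and the construction of $\bM^\pm$ in Proposition~\ref{rename3}: $a^n_{p,l}$ is the image of $G^{*,\pm}$ and the outgoing traces above under a fixed constant-coefficient linear map acting componentwise on Fourier modes. This map is bounded $H^{s+1}_{T_0}\to H^{s+1}_{T_0}(x',\theta_p)$, so the hypothesis $G\in H^{s+1}_{T_0}$ combined with the trace properties just obtained yields both $a^n_{p,l}\in H^{s+1}_{T_0}$ and the convergence \eqref{r4}. For $b^n_{p,l}$, I would evaluate \eqref{b1a}--\eqref{b1d} at $\{x_d=0\}$ and isolate the single occurrence of $\partial_{x_d}\sigma^n_{p,l}|_{x_d=0}$ coming from $X_{\phi_p}$; this realises $b^n_{p,l}$ as a finite sum of: tangential $(x',\theta_p)$-derivatives of $a^n_{p,l}$; smooth functions of $\uv^{n-1}|_{x_d=0}$ multiplying $\partial_{\theta_p}a^n_{p,k'}$; products $a^{n-1}_{p,k}\partial_{\theta_p}a^n_{p,k'}$; boundary traces of the interaction integrals $J^{k,k',n}_{p,\cdot,\cdot}$, which by Proposition~\ref{rename2} are bilinear functionals of iterate traces; and the linear combination $\sum_k e^k_{p,l}a^{n-1}_{p,k}$. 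Each term costs at most one tangential derivative, so because $s>\frac{d+3}{2}+1$ the Sobolev algebra places $b^n_{p,l}\in H^s_{T_0}(x',\theta_p)$ uniformly in $n$; letting $n\to\infty$ using \eqref{r4} and continuity of multiplication $H^{s+1}\times H^s\to H^s$ gives \eqref{r5}.

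The main obstacle I anticipate is upgrading the $\bH^s_{T_0}$ convergence delivered directly by Proposition~\ref{cgwprops} to strong $H^{s+1}_{T_0}$ convergence of the outgoing traces, which is what \eqref{r4} requires. I would handle this by writing the linearised subsystem satisfied by the difference $\cV^{0,n}_h-\cV^0_h$ and applying the top-order linear energy estimate for the hyperbolic sub-system at level $s+1$: the coefficients (smooth functions of $\uv^{n-1}$) and the source (produced by the discrepancy between successive iterates) are uniformly bounded in $\bH^{s+1}_{T_0}$ and tend to zero in $\bH^s_{T_0}$, and the tame structure of the iteration is enough for a standard commutator argument to close the top-norm loop. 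Once this refinement is in place, everything else reduces to routine tame multiplication and trace estimates.
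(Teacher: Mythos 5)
Your decomposition of $a^n_{p,l}$ and $b^n_{p,l}$ (the map $\bM^\pm$ acting on $G^*$ and the outgoing traces; evaluation of \eqref{b1a}--\eqref{b1d} at $x_d=0$ into tangential derivatives of the traces, products handled by the Banach algebra property, and interaction integrals handled by Proposition \ref{rename2} with $x'$ in place of $x$) is exactly the paper's argument for the membership statements, and your reduction of \eqref{r4}--\eqref{r5} to convergence of the hyperbolic boundary traces is the right reduction. The gap is in how you propose to obtain that trace convergence. Your first paragraph's claim that ``continuity of the trace'' upgrades the $\bH^{s}_{T_0}$ interior convergence of $\cV^{0,n}_h$ to strong $H^{s+1}_{T_0}$ convergence of $\sigma^n_{m,k}|_{x_d=0}$ is false as stated (a generic trace argument from an $\bH^{s+1}$ bound and $\bH^{s}$ convergence yields at best convergence strictly below order $s+1$, by interpolation, together with weak compactness at the top order). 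Your proposed repair in the last paragraph does not close this: an energy/commutator estimate at level $s+1$ applied to $\cV^{0,n}_h-\cV^0_h$ whose source terms are only known to tend to zero in $\bH^{s}_{T_0}$ cannot produce convergence to zero at level $s+1$; if it could, you would have proved $\cV^{0,n}_h\to\cV^0_h$ in $\bH^{s+1}_{T_0}$, which is strictly stronger than Proposition \ref{cgwprops}(ii) and is not how Picard iterations for quasilinear systems behave (uniform boundedness at the top order, contraction one order down). Terms such as $\bigl(\cM(\cV^{0,n-1}_h)-\cM(\cV^0_h)\bigr)\partial_\theta\cV^0_h$ are genuinely only $o(1)$ in $\bH^{s}$, so the top-order loop does not close by a ``standard'' argument.

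What the paper actually does at this point is not to upgrade the interior convergence at all: it invokes the trace statements \eqref{r6}--\eqref{r7} for the hyperbolic profiles and their $x_d$-derivatives, which are part of the construction of the hyperbolic sub-system (Proposition \ref{cgwprops}, following the transport-equation estimates of \cite{cgw}, in which the same-order boundary norms of the profiles are controlled alongside the interior norms, not recovered afterwards by a trace theorem). So to complete your argument you should either quote those same-order trace convergence properties of the \cite{cgw} iteration scheme directly, or prove them using the specific transport structure along $X_{\phi_m}$ transverse to $\{x_d=0\}$; a generic top-order energy estimate on the difference of iterates will not do it. Once \eqref{r6}--\eqref{r7} are in hand, the rest of your argument (boundedness of $\bM^\pm$, algebra estimates, Proposition \ref{rename2}, passage to the limit) coincides with the paper's proof.
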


\begin{proof}
Checking $a^n_{p,l}\in H^{s+1}_{T_0}(x',\theta_p)$ is straightforward, since $G(x',\theta_p)$, $\sigma^{n}_{m,k}(x',0,\theta_p)$, $m\in\mathcal{O}$, are in $H^{s+1}_{T_0}(x',\theta_p)$.\\
Now we show $b^n_{p,l}\in H^{s}_{T_0}(x',\theta_p)$. Observe $b^n_{p,l}$ consists of the terms evaluated at $x_d=0$ \eqref{b1b}, \eqref{b1c}, \eqref{b1d}, and
\begin{align}\label{b1e}
(\partial_{x_d}-X_{\phi_p})\sigma^{n}_{p,l}(x',0,\theta_p).
\end{align}
Since the $\underline{v}^n_{p,k}$, $\sigma^{n}_{p,k}(x',0,\theta_p)$, $p\in\mathcal{I}\cup\mathcal{O}$, are in $H^{s+1}_{T_0}(x',\theta_p)$, and $H^{s}_{T_0}(x',\theta_p)$ is a Banach algebra, it follows that \eqref{b1e} and both \eqref{b1b} and \eqref{b1d} at $x_d=0$ are in $H^{s}_{T_0}(x',\theta_p)$. It remains to show this for \eqref{b1c} at $x_d=0$. This follows from Proposition \ref{rename2} with $x'$ in place of $x$. Taking into account that, for $p\in\mathcal{I}\cup\mathcal{O}$,
\begin{align}\label{r6}
\lim_{n\rightarrow\infty}\sigma^{n}_{p,k}|_{x_d=0}=\sigma_{p,k}|_{x_d=0}\textrm{ in }H^{s+1}_{T_0}(x',\theta_p),
\end{align}
\begin{align}\label{r7}
\lim_{n\rightarrow\infty}\partial_{x_d}\sigma^{n}_{p,k}|_{x_d=0}=\partial_{x_d}\sigma_{p,k}|_{x_d=0}\textrm{ in }H^s_{T_0}(x',\theta_p),
\end{align}
the proof that \eqref{r4} and \eqref{r5} hold is similar.
\end{proof}

\begin{rem}
\textup{
To simplify obtaining our approximate solutions of \eqref{b1a}-\eqref{b1d}, it will be useful to extend functions $a^n_{p,l},a_{p,l}\in H^{s+1}_{T_0}=H^{s+1}((-\infty,T_0)\times\bR^{d-1}\times\bT)$, $b^n_{p,l},b_{p,l}\in H^s_{T_0}=H^s((-\infty,T_0)\times\bR^{d-1}\times\bT)$ on the half-space to elements of $H^{s+1}(\bR^d\times\bT)$ and $H^s(\bR^d\times\bT)$, respectively.
}
\end{rem}
\begin{lem}\label{new52}
For $s\geq 0$ there is a continuous extension map
\begin{align}
E:H^s((-\infty,T_0)\times\bR^{d-1}\times\bT)\rightarrow H^s(\bR^d\times\bT).
\end{align}
\end{lem}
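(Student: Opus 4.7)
The plan is to reduce the problem to a one-dimensional extension problem, since the domain $(-\infty, T_0)\times \bR^{d-1}\times \bT$ is already a full space in the $y \in \bR^{d-1}$ and $\theta \in \bT$ directions, with boundary only along the smooth hyperplane $\{t = T_0\}$. After translating so that $T_0 = 0$, I need only construct an extension operator acting in the $t$ variable alone that is bounded on $H^s$ for every $s \geq 0$, applied pointwise in $(y,\theta)$.

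First I would write down Seeley's extension in $t$. Choose real sequences $(a_k)_{k \geq 0}$ and $(b_k)_{k \geq 0}$ with $b_k > 0$, $b_k \to \infty$, $\sum_k |a_k|\, b_k^N < \infty$ for every $N \geq 0$, and $\sum_k a_k (-b_k)^j = 1$ for every nonnegative integer $j$; such sequences are standard. Fix a smooth cutoff $\chi \in C^\infty_c(\bR)$ equal to $1$ in a neighborhood of $0$, and for $u$ defined on $(-\infty,0) \times \bR^{d-1}\times \bT$ define
\begin{align*}
Eu(t,y,\theta) := \begin{cases} u(t,y,\theta), & t < 0, \\ \sum_{k\geq 0} a_k\, \chi(-b_k t)\, u(-b_k t, y, \theta), & t \geq 0. \end{cases}
\end{align*}
The moment identities $\sum_k a_k (-b_k)^j = 1$ force the one-sided $t$-derivatives of $Eu$ at $t = 0$ to match up to any order, which together with the summability of $|a_k| b_k^N$ yields that $E$ is bounded from $H^m((-\infty,0)\times \bR^{d-1}\times \bT)$ to $H^m(\bR^d \times \bT)$ for every nonnegative integer $m$; the estimate in the $y$ and $\theta$ directions is immediate since $E$ commutes with $\partial_{y_j}$ and $\partial_\theta$.

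For noninteger $s \geq 0$, I would conclude by real (or complex) interpolation between consecutive integer Sobolev spaces on the half-space and the full space, using that the restriction map $H^s(\bR^d\times \bT)\to H^s((-\infty,0)\times \bR^{d-1}\times \bT)$ interpolates compatibly with $E$. Equivalently, one can pass to the Fourier transform in $y$ and the Fourier series in $\theta$, so that the claim reduces to a uniform family of half-line extensions with weighted Bessel-potential norms parametrized by the dual variables $(\eta,j) \in \bR^{d-1}\times \bZ$, and check boundedness directly from the Seeley formula. Either route gives the advertised bound for all $s \geq 0$.

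The step that most deserves care is the noninteger case; for integer $s$ the Seeley construction is essentially a finite sum of rescaled reflections, but for general $s \geq 0$ one must either justify the interpolation or invoke Stein's universal extension theorem for Lipschitz domains applied to the half-space $\{t < 0\}\subset \bR$ tensored with the identity on $\bR^{d-1}\times \bT$. Since the boundary is a smooth hyperplane with no corners, no further geometric adjustment is needed.
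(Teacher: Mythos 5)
Your construction is correct, and it takes a more self-contained route than the paper, whose entire proof is a citation: the paper simply invokes Section 4.4 of Taylor's \emph{Partial Differential Equations I} for the existence of a continuous extension $H^s(\bR^d_+)\to H^s(\bR^d)$ and leaves the adaptation to the factor $\bT$ (and the translation $t<T_0$) implicit. You instead build the operator explicitly: a Seeley extension acting only in the $t$ variable, with the moment conditions $\sum_k a_k(-b_k)^j=1$ and the summability $\sum_k|a_k|b_k^N<\infty$ giving boundedness on $H^m$ for every integer $m$, and then interpolation (or the Fourier reduction in $(y,\theta)$, or Stein's theorem) for noninteger $s$. This buys two things the citation does not literally provide: the operator manifestly commutes with $\partial_{y_j}$ and $\partial_\theta$, so the periodic factor is handled transparently rather than by tacit analogy with the pure half-space case; and a single operator works for all $s\ge 0$ simultaneously. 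The one step you rightly flag as needing care, the noninteger case, is fine as you set it up: since the same $E$ is bounded at consecutive integer orders and the restriction map is bounded at all orders, the retract argument gives $[H^m,H^{m+1}]_\theta$-boundedness compatible with the restriction definition of $H^s$ on the half-space, so there is no circularity. In short, your proof is a valid and somewhat more informative replacement for the paper's one-line citation.
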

\begin{proof}
It is shown in 4.4 of \cite{taylor} that there is a continuous extension map from $H^s(\bR^d_+)$ to $H^s(\bR^d)$.
\end{proof}
From now on, in place of $a^n_{p,l},a_{p,l},b^n_{p,l},b_{p,l}$ we refer to their extensions to the respective spaces mentioned above unless we explicitly state otherwise.\\The following lemma is a standard kind of result in the theory of hyperbolic initial/boundary value problems.
\begin{lem}\label{rename9}
(i) For $a\in H^{s+1}(\bR^d\times\bT)$ and $b\in H^s(\bR^d\times\bT)$, there exists unique $\varsigma\in H^{s+1}_{D}=H^{s+1}(\bR^d\times[0,D]\times\bT)$ solving the wave equation
\begin{align}\label{rename12}
\partial^2_{x_d}\varsigma-\Delta_{x',\theta_0}\varsigma=0,
\end{align}
and initial-$x_d$ conditions
\begin{eqnarray}\label{rename13}
\varsigma|_{x_d=0}&=& a,\\
\partial_{x_d}\varsigma|_{x_d=0}&=& b.
\end{eqnarray}
(ii) Furthermore, $\varsigma$ satisfies
\begin{align}\label{rename8}
|\varsigma|_{H^{s+1}_D}\leq C\left(|a|_{H^{s+1}}+|b|_{H^s}\right),
\end{align}
for some constant $C$ independent of the choice of initial data $\{a,b\}$.
\end{lem}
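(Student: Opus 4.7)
The plan is to exploit that $(x',\theta_0)\in\R^d\times\bT$ is a whole space (no boundary in the ``spatial'' variables), so we can diagonalize the constant-coefficient operator $\partial_{x_d}^2-\Delta_{x',\theta_0}$ via Fourier analysis in $(x',\theta_0)$ and reduce \eqref{rename12} to a family of scalar ODEs in $x_d$. Let $\xi'\in\R^d$ be dual to $x'$ and $k\in\Z$ dual to $\theta_0$, and set $\lambda=\lambda(\xi',k):=\sqrt{|\xi'|^2+k^2}$. Taking the Fourier transform/series in $(x',\theta_0)$, \eqref{rename12} becomes $\partial_{x_d}^2\hat\varsigma+\lambda^2\hat\varsigma=0$ with data $\hat\varsigma(0)=\hat a$, $\partial_{x_d}\hat\varsigma(0)=\hat b$, whose unique solution is
\begin{align}
\hat\varsigma(\xi',k,x_d)=\cos(\lambda x_d)\,\hat a(\xi',k)+\frac{\sin(\lambda x_d)}{\lambda}\,\hat b(\xi',k),
\end{align}
with the usual convention $\sin(\lambda x_d)/\lambda:=x_d$ when $\lambda=0$. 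This candidate $\varsigma$ is well defined by inverse Fourier transform, and uniqueness at the ODE level gives uniqueness of $\varsigma$ in (i) (alternatively, one can prove uniqueness directly from conservation of the energy $E(x_d)=\tfrac12\int(|\partial_{x_d}\varsigma|^2+|\nabla_{x',\theta_0}\varsigma|^2)$).

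Next I would derive the estimate (ii) from the explicit formula. Using Plancherel, $\|\varsigma(\cdot,x_d)\|_{H^{s+1}(\R^d\times\bT)}^2$ is equivalent to $\sum_k\int(1+\lambda^2)^{s+1}|\hat\varsigma(\xi',k,x_d)|^2\,d\xi'$. Since $|\cos(\lambda x_d)|\le 1$ and, for $x_d\in[0,D]$, $|\sin(\lambda x_d)/\lambda|^2\le\min(x_d^2,\lambda^{-2})\le D^2$ when $\lambda\le 1$ and $\le \lambda^{-2}$ when $\lambda>1$, the elementary inequality
\begin{align}
(1+\lambda^2)^{s+1}\,\Bigl|\tfrac{\sin(\lambda x_d)}{\lambda}\Bigr|^2\le C_{s,D}\,(1+\lambda^2)^{s}
\end{align}
yields $\|\varsigma(\cdot,x_d)\|_{H^{s+1}}^2\le C_{s,D}(\|a\|_{H^{s+1}}^2+\|b\|_{H^s}^2)$ uniformly in $x_d\in[0,D]$. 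A parallel bound on $\partial_{x_d}\hat\varsigma=-\lambda\sin(\lambda x_d)\hat a+\cos(\lambda x_d)\hat b$ gives $\|\partial_{x_d}\varsigma(\cdot,x_d)\|_{H^{s}}^2\le C(\|a\|_{H^{s+1}}^2+\|b\|_{H^s}^2)$.

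To upgrade these tangential bounds to the full $H^{s+1}_D$ norm, I would use the equation to convert normal derivatives to tangential ones: iterating $\partial_{x_d}^2\varsigma=\Delta_{x',\theta_0}\varsigma$ gives $\partial_{x_d}^{2j}\varsigma=\Delta^j\varsigma$ and $\partial_{x_d}^{2j+1}\varsigma=\Delta^j\partial_{x_d}\varsigma$. Hence for each $0\le m\le s+1$ the mixed norm $\|\partial_{x_d}^m\varsigma(\cdot,x_d)\|_{H^{s+1-m}(\R^d\times\bT)}$ is controlled by either $\|\varsigma(\cdot,x_d)\|_{H^{s+1}}$ or $\|\partial_{x_d}\varsigma(\cdot,x_d)\|_{H^s}$, each of which has already been bounded by the right-hand side of \eqref{rename8}. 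Integrating in $x_d$ over $[0,D]$ and summing in $m$ produces the desired estimate with $C=C(s,D)$ independent of $\{a,b\}$; membership $\varsigma\in H^{s+1}_D$ follows from the same calculation, completing (ii) and the existence part of (i).

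The only mildly delicate point is that the $\sin(\lambda x_d)/\lambda$ factor multiplies $\hat b$, which sits at one less derivative than $\hat a$; the estimate above shows that the vanishing of this symbol at $\lambda=0$ exactly compensates for the regularity gap, which is why we lose at most one derivative on $b$. Everything else is Fourier multiplier bookkeeping plus the elementary identity $\partial_{x_d}^2=\Delta_{x',\theta_0}$ on solutions.
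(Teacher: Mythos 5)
Your proof is correct, but it is a genuinely different route from the paper's: the paper does not argue at all, it simply cites the standard hyperbolic Cauchy-problem theory (Chazarain--Piriou \cite{cp}, Chapter 6, 6.18 and Remark 6.21) for existence, uniqueness and the energy estimate, with $x_d$ playing the role of time. You instead exploit the special structure at hand --- constant coefficients and a boundaryless tangential domain $\R^d\times\bT$ --- to diagonalize by Fourier transform in $x'$ and Fourier series in $\theta_0$, solve the resulting ODEs explicitly, and read off the estimate from $|\cos(\lambda x_d)|\le 1$ and $(1+\lambda^2)^{s+1}|\sin(\lambda x_d)/\lambda|^2\le C_{s,D}(1+\lambda^2)^s$, which also makes transparent why the vanishing of the symbol $\sin(\lambda x_d)/\lambda$ at $\lambda=0$ exactly compensates the one-derivative gap on $b$ and why $C$ depends only on $s$ and $D$. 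The citation buys generality (variable coefficients, general strictly hyperbolic operators, no explicit solution formula needed); your argument buys self-containedness and an explicit constant. Two minor bookkeeping points: your step converting normal into tangential derivatives via $\partial_{x_d}^2=\Delta_{x',\theta_0}$ implicitly uses integer regularity (the characterization of $H^{s+1}$ on the slab through $\sum_{m\le s+1}\|\partial_{x_d}^m\cdot\|_{L^2_{x_d}H^{s+1-m}}$), which is harmless here since $s$ is an integer, and could anyway be bypassed by estimating $\partial_{x_d}^m\hat\varsigma$ directly from the explicit formula; and for uniqueness in $H^{s+1}_D$ one should note that for a.e.\ $(\xi',k)$ the function $\hat\varsigma(\xi',k,\cdot)$ lies in $H^{s+1}([0,D])\subset C^1$, so the ODE and zero initial data apply classically --- both are routine.
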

Justification of Lemma \ref{rename9} can be found in Chapter 6 (see 6.18 and Remark 6.21) of \cite{cp}.
\begin{prop}\label{r2}
For $a^n_{p,l}$, $b^n_{p,l}$, $a_{p,l}$, and $b_{p,l}$ as defined in Definition \ref{traceDefns}, where $p\in\cP\cup\cN$ there exist $\sigma^n_{p,l}(x,\theta_p)$, $\sigma_{p,l}(x,\theta_p)\in H^{s+1}_{T_0}(x,\theta_p)$ with compact $x_d$-support in $[0,D]$ satisfying
\begin{align}
\sigma^n_{p,l}=\sigma_{p,l} =0,\textrm{ for }t\leq 0,
\end{align}
boundary conditions
\begin{eqnarray}\label{rename11}
\sigma^n_{p,l}|_{x_d=0}&=&a^n_{p,l},\quad \sigma_{p,l}|_{x_d=0}=a_{p,l},\\
\partial_{x_d}\sigma^n_{p,l}|_{x_d=0}&=&b^n_{p,l},\quad \partial_{x_d}\sigma_{p'l}|_{x_d=0}=b_{p,l}, \nonumber
\end{eqnarray}
and
\begin{align}\label{rename14}
\lim_{n\rightarrow\infty}\sigma^n_{p,l}=\sigma_{p,l}\textrm{ in }H^{s+1}_{T_0}(x,\theta_p).
\end{align}
\end{prop}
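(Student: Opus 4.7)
The plan is to construct $\sigma_{p,l}$ (and analogously each iterate $\sigma^n_{p,l}$) as a frame-shifted, compactly truncated solution of an auxiliary scalar wave equation in which $x_d$ plays the role of time, using Lemma \ref{rename9} to convert $H^{s+1}\times H^s$ Cauchy data into an $H^{s+1}$ object. The three requirements of Proposition \ref{r2}---the trace conditions \eqref{rename11} at $x_d=0$, support in $\{t\geq 0\}$, and compact $x_d$-support in $[0,D]$---will be obtained respectively by a careful choice of Cauchy data, by a frame shift $t\mapsto t-x_d$, and by multiplication by a smooth cutoff in $x_d$.

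First, using Lemma \ref{new52} I would extend $a_{p,l},a^n_{p,l}$ to elements of $H^{s+1}(\mathbb{R}^d\times\mathbb{T})$ and $b_{p,l},b^n_{p,l}$ to elements of $H^s(\mathbb{R}^d\times\mathbb{T})$; since that extension only enlarges the $t$-interval past $T_0$, the vanishing of the data on $\{t\leq 0\}$ is automatically preserved. Set the modified second Cauchy datum
\begin{align*}
\tilde b_{p,l}:=b_{p,l}+\partial_t a_{p,l}\in H^s(\mathbb{R}^d\times\mathbb{T}),
\end{align*}
which still vanishes for $t\leq 0$. Applying Lemma \ref{rename9} to $(a_{p,l},\tilde b_{p,l})$ produces $\varsigma_{p,l}\in H^{s+1}(\mathbb{R}^d\times[0,D]\times\mathbb{T})$ with $\varsigma_{p,l}|_{x_d=0}=a_{p,l}$ and $\partial_{x_d}\varsigma_{p,l}|_{x_d=0}=\tilde b_{p,l}$, and finite propagation speed for $\partial_{x_d}^2-\Delta_{x',\theta_0}$ forces $\varsigma_{p,l}(\tau,y,x_d,\theta_p)=0$ for $\tau<-x_d$. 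Picking $\chi\in C^\infty_c([0,D])$ with $\chi(0)=1$ and $\chi'(0)=0$, I then define
\begin{align*}
\sigma_{p,l}(t,y,x_d,\theta_p):=\chi(x_d)\,\varsigma_{p,l}(t-x_d,y,x_d,\theta_p),
\end{align*}
and $\sigma^n_{p,l}$ analogously starting from $(a^n_{p,l},\,b^n_{p,l}+\partial_t a^n_{p,l})$.

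The remaining verifications are routine. Compact $x_d$-support comes from $\chi$; the frame shift turns the bound $\varsigma_{p,l}=0$ on $\{\tau<-x_d\}$ into $\sigma_{p,l}=0$ on $\{t<0\}$; the trace $\sigma_{p,l}|_{x_d=0}=a_{p,l}$ is immediate, and
\begin{align*}
\partial_{x_d}\sigma_{p,l}|_{x_d=0}=\chi'(0)a_{p,l}+\chi(0)\bigl(-\partial_t a_{p,l}+\tilde b_{p,l}\bigr)=b_{p,l},
\end{align*}
which is precisely the reason $\partial_t a_{p,l}$ was inserted into $\tilde b_{p,l}$. The change of variables $(t,x_d)\mapsto(t-x_d,x_d)$ and multiplication by $\chi$ preserve $H^{s+1}$, so $\sigma_{p,l}\in H^{s+1}_{T_0}(x,\theta_p)$ after restricting back to $t\leq T_0$. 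For the convergence \eqref{rename14}, linearity of the construction and the estimate \eqref{rename8} yield
\begin{align*}
|\sigma^n_{p,l}-\sigma_{p,l}|_{H^{s+1}_{T_0}(x,\theta_p)}\leq C\bigl(|a^n_{p,l}-a_{p,l}|_{H^{s+1}}+|b^n_{p,l}-b_{p,l}|_{H^s}\bigr),
\end{align*}
which vanishes as $n\to\infty$ by Lemma \ref{rename7}. The main obstacle I anticipate is the simultaneous management of support and boundary data: a naive use of Lemma \ref{rename9} propagates symmetrically in $t$ and yields only $\{t\geq -x_d\}$ support, while the shift $t\mapsto t-x_d$ that cures this introduces a spurious $-\partial_t a_{p,l}$ term in the $\partial_{x_d}$-trace. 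Compensating by passing from $b_{p,l}$ to $b_{p,l}+\partial_t a_{p,l}$ consumes one $t$-derivative of $a_{p,l}$ and is exactly why the hypothesis affords $a_{p,l}\in H^{s+1}$ while only asking $b_{p,l}\in H^s$.
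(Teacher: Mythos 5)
Your proposal is correct and follows essentially the same route as the paper: extend the traces via Lemma \ref{new52}, feed the shifted Cauchy data $\{a_{p,l},\,b_{p,l}+\partial_t a_{p,l}\}$ into the auxiliary wave equation of Lemma \ref{rename9}, then define $\sigma_{p,l}=\chi(x_d)\varsigma_{p,l}(t-x_d,\cdot)$ and use finite propagation speed for the $t\leq 0$ support, with convergence \eqref{rename14} coming from linearity, the estimate \eqref{rename8}, and Lemma \ref{rename7}. Your explicit remark that the cutoff needs $\chi(0)=1$, $\chi'(0)=0$ (so the shift's spurious $-\partial_t a_{p,l}$ term is exactly cancelled) only makes explicit what the paper leaves implicit.
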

\begin{proof}
We apply Lemma \ref{rename9} to initial data $\{a_{p,l},b_{p,l}+\partial_t a_{p,l}\}$ to obtain the corresponding solution of \eqref{rename12}, denoted $\varsigma_{p,l}\in H^{s+1}_D$, and subsequently to each $\{a^n_{p,l},b^n_{p,l}+\partial_t a^n_{p,l}\}$, obtaining solutions $\varsigma^n_{p,l}\in H^{s+1}_D$. Now we fix a smooth cutoff function $\chi(x_d)$ supported in $[0,D)$, and define
\begin{eqnarray}\label{r3}
\sigma_{p,l}(t,x'',\theta_p) &=& \chi(x_d) \varsigma_{p,l}(t-x_d,x'',\theta_p),\\
\sigma^n_{p,l}(t,x'',\theta_p) &=& \chi(x_d) \varsigma^n_{p,l}(t-x_d,x'',\theta_p).
\end{eqnarray}
It is easy to check that then $\sigma^n_{p,l}$, $\sigma_{p,l}$ also belong to $H^{s+1}_D$. Since $\varsigma_{p,l}|_{x_d=0}$, $\varsigma^n_{p,l}|_{x_d=0}$ are supported in $t>0$, by finite speed of propagation for solutions to the wave equation \eqref{rename12}, for any fixed $r>0$, the $(t,y,\theta_p)$-supports of $\varsigma_{p,l}|_{x_d=r}$, $\varsigma^n_{p,l}|_{x_d=r}$ are contained in the union of balls of radius $r$ about points in $\{(t,y,\theta_p):t>0\}$. Thus, the support is contained in $\{(t,y,\theta_p):t>-r\}$. It follows that $\sigma_{p,l}(t,y,x_d,\theta_p)$, $\sigma^n_{p,l}(t,y,x_d,\theta_p)$ are zero for all $t\leq 0$, since the supports of $\sigma_{p,l}$, $\sigma^n_{p,l}$ consist only of points satisfying $t-x_d>-x_d$, i.e. $t>0$. It is easy to check from \eqref{r3} that the $\sigma_{p,l}$, $\sigma^n_{p,l}$ have the desired traces at $x_d=0$, satisfying \eqref{rename11}. To establish \eqref{rename14}, first we note we may regard the $\sigma_{p,l}$, $\sigma^n_{p,l}$ as elements of $H^{s+1}_{T_0}(x,\theta_p)$ by defining them to be equal to zero for $x_d>D$ and restricting to $t<T_0$. One easily verifies the bound
\begin{align}
|\sigma^n_{p,l}-\sigma_{p,l}|_{H^{s+1}_{T_0}}\leq |\sigma^n_{p,l}-\sigma_{p,l}|_{H^{s+1}_D}\leq C_1 |\varsigma^n_{p,l}-\varsigma_{p,l}|_{H^{s+1}_D}.
\end{align}
Applying Lemma \ref{rename9} to the solutions $\varsigma^n_{p,l}-\varsigma_{p,l}$, from the above and the estimate \eqref{rename8} we get
\begin{align}
|\sigma^n_{p,l}-\sigma_{p,l}|_{H^{s+1}_{T_0}}\leq C_2\left(|a^n_{p,l}-a_{p,l}|_{H^{s+1}}+|b^n_{p,l}-b_{p,l}|_{H^s}\right).
\end{align}
Now we conclude \eqref{rename14} from Lemma \ref{rename7} and the continuity of the extension map from Lemma \ref{new52}.
\end{proof}

Now that we have obtained the elliptic profiles, we have determined the ansatz $\cV^0$.

\begin{definition}\label{ansatzDefn}
Let $G\in H^{s+1}_{T_0}$ and let $\cV^0_h\in \bH^{s+1}_{T_0}$ be the solution of the hyperbolic sub-system constructed in Proposition \ref{cgwprops}, and let the $\cV^{0,n}_h\in \bH^{s+1}_{T_0}$ be the corresponding iterates. Additionally, let $\sigma_{p,l}$, $\sigma^n_{p,l}\in H^{s+1}_{T_0}$, where $p\in\cP\cup\cN$, be the elliptic profiles and iterates obtained in Proposition \ref{r2}.

(i) We define the elliptic part of our ansatz
\begin{align}\label{ellipForm}
\cV^0_e(x,\theta_1,\ldots,\theta_M):=\sum^M_{m\in\cP\cup\cN}\sum^{\mu_m}_{k=1}\sigma_{m,k}(x,\theta_m)r_{m,k},
\end{align}
and the corresponding $n$th iterate
\begin{align}
\cV^{0,n}_e(x,\theta_1,\ldots,\theta_M):=\sum^M_{m\in\cP\cup\cN}\sum^{\mu_m}_{k=1}\sigma^n_{m,k}(x,\theta_m)r_{m,k}.
\end{align}
Our ansatz is defined to be
\begin{align}
\cV^0:=\cV^0_h +\cV^0_e,
\end{align}
and we also define $\cV^{0,n}:=\cV^{0,n}_h +\cV^{0,n}_e$.

(ii) We isolate the error in the iterated interior profile equations, defining $R^n(x,\theta)$ by
\begin{align}\label{d1}
R^n:=\mathbb{E}(\tilde{L}(\partial_x)\mathcal{V}^{0,n}+\mathcal{M}(\mathcal{V}^{0,n-1})\partial_\theta\mathcal{V}^{0,n}-F(0)\mathcal{V}^{0,n-1}).
\end{align}
\end{definition}

\begin{prop}
Suppose the hypotheses of Definition \ref{ansatzDefn} are satisfied. Then $\cV^0\in \bH^{s+1}_{T_0}$, and 
\begin{align}
\lim_{n\to\infty}\cV^{0,n}=\cV^0\textrm{ in }\bH^s_{T_0}.
\end{align}
\end{prop}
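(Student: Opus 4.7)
The proof amounts to assembling the results already proved for the hyperbolic and elliptic parts and adding them. The plan is to split $\cV^0 = \cV^0_h + \cV^0_e$ and $\cV^{0,n} = \cV^{0,n}_h + \cV^{0,n}_e$, establish the regularity and convergence of each summand in its own right, and then sum.

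For the hyperbolic piece I would simply cite Proposition \ref{cgwprops} with $m := s+1$: this immediately delivers $\cV^0_h \in \bH^{s+1}_{T_0}$, each iterate $\cV^{0,n}_h \in \bH^{s+1}_{T_0}$, and $\cV^{0,n}_h \to \cV^0_h$ in $\bH^s_{T_0}$. No further work is needed here.

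For the elliptic piece, Proposition \ref{r2} supplies for each $p \in \cP \cup \cN$ and $l \in \{1,\ldots,\mu_p\}$ an elliptic profile and iterates $\sigma_{p,l}, \sigma^n_{p,l} \in H^{s+1}_{T_0}(x,\theta_p)$ with $\sigma^n_{p,l} \to \sigma_{p,l}$ in $H^{s+1}_{T_0}(x,\theta_p)$. The only point worth noting is the lift from the single-variable periodic space $H^{s+1}_{T_0}(x,\theta_p)$ to the $\bT^M$-periodic space $\bH^{s+1}_{T_0}$: viewing $\sigma_{p,l}(x,\theta_p)$ as a function on $\overline{\R}^{d+1}_+ \times \bT^M$ constant in the $\theta_j$ for $j \neq p$, its Fourier coefficients are supported on those $\alpha \in Z^M$ with $\alpha_j = 0$ for $j \neq p$, and thus lie in $Z^{M;1}$ (the spectrum condition $\alpha_p \in Z_p$ being exactly the restriction built into the elliptic profiles in Section \ref{RoleOfNonreality}); the multi-variable $H^{s+1}$ norm then agrees with the single-variable one up to a factor of the torus volume. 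Multiplying by the constant vector $r_{p,l} \in \bC^N$ and summing over the finite index set $\{(p,l):p\in\cP\cup\cN,\,l\leq\mu_p\}$ gives $\cV^0_e, \cV^{0,n}_e \in \bH^{s+1}_{T_0}$ with $\cV^{0,n}_e \to \cV^0_e$ in $\bH^{s+1}_{T_0}$, which is strictly stronger than what is required.

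Adding the two parts finishes the proof: $\cV^0 = \cV^0_h + \cV^0_e \in \bH^{s+1}_{T_0}$, and by the triangle inequality
\[
|\cV^{0,n}-\cV^0|_{\bH^s_{T_0}} \leq |\cV^{0,n}_h-\cV^0_h|_{\bH^s_{T_0}} + |\cV^{0,n}_e-\cV^0_e|_{\bH^s_{T_0}} \to 0 \quad \text{as } n\to\infty.
\]
There is no real obstacle; the convergence takes place in $\bH^s_{T_0}$ rather than $\bH^{s+1}_{T_0}$ only because Proposition \ref{cgwprops} loses one derivative on the hyperbolic side, which is already the weakest link.
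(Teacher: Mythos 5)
Your proposal is correct and follows the same route as the paper, whose proof is simply the one-line observation that the claims follow directly from Proposition \ref{cgwprops} and Proposition \ref{r2}; you have merely spelled out the routine details (the lift of the elliptic profiles to $\bH^{s+1}_{T_0}$ and the triangle inequality) that the paper leaves implicit.
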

\begin{proof}
The claims follow directly from Proposition \ref{cgwprops} and Proposition \ref{r2}.
\end{proof}

\begin{rem}\label{new5}
\textup{Recalling \eqref{hypConditionsIt}, note that
\begin{align}
\mathbb{E}_h(\tilde{L}(\partial_x)\mathcal{V}^{0,n}+\mathcal{M}(\mathcal{V}^{0,n-1})\partial_\theta\mathcal{V}^{0,n}-F(0)\mathcal{V}^{0,n-1})=0,
\end{align}
and so $R^n$ of \eqref{d1} is purely elliptic in the sense that $\bE_e R^n= R^n$.  Moreover,
\begin{align}\label{new6}
R^n=\sum_{p\in\mathcal{P}\cup\mathcal{N}}\sum^{\mu_p}_{l=1}
(X_{\phi_p}\sigma^{n}_{p,l}-f^{n}_{p,l})r_{p,l},
\end{align}
which is zero at the boundary $x_d=0$, since we have satisfied \eqref{b2exact} for $x_d=0$.
Summarizing the results of Proposition \ref{cgwprops} and Proposition \ref{r2}, we conclude
\begin{align}\label{fullConditionsIt}
\begin{split}
&a)\;\bE\cV^{0,n}=\cV^{0,n}\\
&b)\;\bE\left(\tilde{L}(\partial_x)\cV^{0,n}+\cM(\cV^{0,n-1})\partial_{\theta}\cV^{0,n}\right)=\bE(F(0)\cV^{0,n-1})+R^n\text{ in }x_d\geq 0\\
&c)\;B(0)\cV^{0,n}(x',0,\theta_0,\dots,\theta_0)=G(x',\theta_0)\\
&d)\;\cV^{0,n}=0\text{ in }t<0.
\end{split}
\end{align}
The sense in which an error such as $R^n$ is small is clarified by Proposition \ref{new3}.}
\end{rem}

\begin{rem}
\textup{Recall the discussion in Remark \ref{modeProps}(ii) which gives the bijection between the eigenvalues $\uomega_m$ with $m\in\cP$ and $\uomega_{m'}=\overline{\uomega}_m$, $m'\in\cN$ and between the corresponding eigenvectors. A careful look at the profile equations shows that the equations for the elliptic profiles come in conjugate pairs. That is, the equation for $\sigma_{m,k}$, some $m\in\cP$, is the conjugate of the equation for $\sigma_{m',k'}$ with the corresponding $m'\in\cN$ and $k'$. As a result the solutions we have obtained, the elliptic profiles, also come in conjugate pairs, satisfying $\sigma_{m',k'}=\overline{\sigma_{m,k}}$. Meanwhile, the hyperbolic part of the solution $\cV^0_h$ is real. It follows from these observations and the definition of our ansatz that when we plug in $\theta=\theta(\theta_0,\xi_d)$, getting
\begin{align}\label{blank}
\cU^0(x,\theta_0,\xi_d):=\cV^0(x,\theta_0+\uomega_1 \xi_d,\ldots, \theta_0+\uomega_M \xi_d),
\end{align}
we have a function $\cU^0(x,\theta_0,\xi_d)$ which is in fact real on $\overline{\R}^{d+1}_+\times\bT\times\overline{\R}_+$.}
\end{rem}
\subsection{The expansions for the approximate solution }

%\textbf{Definitions for spaces. }

For this study, we define the spaces $E^s_T$ and $\cE^s_T$ as in \cite{cgw}.
\begin{align}\label{ESpace}
E^s_T=C(x_d,H^s_T(x',\theta_0))\cap L^2(x_d,H^{s+1}_T(x',\theta_0)),
\end{align}
where by $C(x_d,H^s_T(x',\theta_0))$ we actually refer to functions in $C(x_d,H^s_T(x',\theta_0))$ with $x_d$-support in $[0,D]$ for some large enough $D$, and for $C(x_d,H^s_T(x',\theta_0))$ we use the $L^\infty(x_d,H^s_T(x',\theta_0))$ norm where the supremum is taken over $x_d\geq 0$. These spaces are algebras and are contained in $\L^\infty$ for $s>\frac{d+1}{2}$. Theorem 7.1 of \cite{williams2}, as discussed in Section \ref{testlabel}, regarding existence of solutions of the singular system, tells us that for $s$ large enough, one has existence of solutions to \eqref{new61} in the space $E^s_T$ on a time interval $[0,T]$ independent of $\eps\in (0,\eps_0]$. For these reasons, the space $E^s_T$ and related estimates are key to our analysis, in particular for our main theorem (Theorem \ref{e1}) which relies on Proposition \ref{e9}, also proved in \cite{williams2}. Additionally we use the spaces
\begin{align}\label{d21uz}
\cE^s_T=\{\cU(x,\theta_0,\xi_d):\sup_{\xi_d\geq 0}|\cU(\cdot,\cdot,\xi_d)|_{E^s_T} < \infty\},
\end{align}
which also play a role in Theorem \ref{e1}. The proof of Theorem \ref{e1} uses estimates regarding functions such as $\cV$ in $H^{s+1}_T$ and the corresponding $\cU$ in $\cE^s_T$ which results from the substitution $\theta=\theta(\theta_0,\xi_d)$. While this moves us out of the space of periodic profiles  $H^{s+1}_T(x,\theta)$, we get functions in $\cE^s_T(x,\theta_0,\xi_d)$ which we can approximate with finite trigonometric polynomials (truncated expansions\footnote{These expansions are given by Proposition \ref{propA}.}) in $\theta=\theta(\theta_0,\xi_d)$.
\begin{definition}
For $k=1,\,2$ we define
\begin{align}
\mathcal{E}^{s;k}_T:=\{\mathcal{U}(x,\theta_0,\xi_d)=\mathcal{V}(x,\theta)|_{\theta=\theta(\theta_0,\xi_d)}:\mathcal{V}\in H^{s+1;k}_T\},
\end{align}
with the norm $|\cdot|_{\mathcal{E}^s_T}$. (Note: The subscript $T$ has the same indication as it did for the $H^{s+1;k}_T$ used in \textup{\cite{cgw}}, where the subscript $T$ is at first ignored.)
\end{definition}
It is verified in Proposition \ref{propA} that elements of $\mathcal{E}^{s;2}_T$ are bounded in the $\mathcal{E}^s_T$ norm.

\textbf{Convergence of expansions in $\cE^s_T$. }\label{profileeqns}

The following notation gives us a way to sort the spectra $\alpha\in Z^{M;2}$ of elements $\mathcal{V}$ in $H^{s+1;2}_T$, which will aid in showing $\mathcal{U}(x,\theta_0,\xi_d)=\mathcal{V}(x,\theta)|_{\theta=\theta(\theta_0,\xi_d)}$ has an expansion converging in $\mathcal{E}^s_T$, with Proposition \ref{propA}.
\begin{definition}\label{defA}
Let $\{z_k\}^\infty_{k=1}$ enumerate the set $\{\alpha\cdot\uomega:\alpha\in Z^{M;2}\}$. For each $j,k$, we define
\begin{align}
C_{j,k}:=\{\alpha\in Z^{M;2}:\sum^M_{i=1}\alpha_i=j,\alpha\cdot\uomega=z_k\}
\end{align}
and pick out one element $\alpha_{(j,k)}\in C_{j,k}$.
\end{definition}
\begin{rem}
\emph{
A nice consequence of the fact that we work with $Z^{M;2}$ instead of general $Z^{M;k}$ is that we can easily show the $C_{j,k}$ are finite\footnote{We work with $Z^{M;2}$ due to the fact that we only have to consider quadratic interactions. Further discussion on interactions and resonances can be found in Sections \ref{peransatzwithcor} and \ref{largesystem}.}. To see this, first fix $j\in\Z$, $k\in\{1,2,\ldots\}$, and $p$, $q\in\{1,\ldots,M\}$ and take arbitrary $\alpha\in C_{j,k}$ such that all but the $p$th and $q$th components are zero (either of the $p$th and $q$th components may be zero, as well.) We claim this is the only such element of $C_{j,k}$. This is because $\uomega_p\neq\uomega_q$ implies
\begin{align}
\left( \begin{array}{ccc}
1        & 1 \\
\uomega_p & \uomega_q \end{array} \right)
\left( \begin{array}{ccc}
\alpha_p \\
\alpha_q \end{array} \right)=
\left( \begin{array}{ccc}
j \\
z_k \end{array} \right)
\end{align}
has a unique solution $(\alpha_p,\alpha_q)$. Thus, the number of such $\alpha$ in $C_{j,k}$ is bounded by the number of pairs of components, so $|C_{j,k}|\leq M(M-1)/2$.
}
\end{rem}

A function $\mathcal{V}(x,\theta)\in H^{s+1;2}_T(x,\theta)$ of $(x,\theta)\in\bR^{d+1}\times\bold{C}^M$ has a series
\begin{align}\label{a3}
\mathcal{V}(x,\theta)=\sum_{\alpha\in Z^{M;2}}V_\alpha(x)e^{i\alpha\cdot\theta}
\end{align}
and, for fixed $x_d$, squared $H^s_T(x',\theta)$ norm
\begin{align}\label{a4}
|\mathcal{V}(x,\theta)|^2_{H^s_T(x',\theta)}=\sum_{\alpha\in Z^{M;2}}\sum_{|\beta |\leq s}|\partial^\beta_{x'}V_\alpha(x)|^2_{L^2(x')}(1+|\alpha |)^{2(s-|\beta |)}.
\end{align}
From Sobolev embedding and the fact that
\begin{align}
H^{s+1}_T(x,\theta)\subset L^2(x_d,H^{s+1}_T(x',\theta))\cap H^1(x_d,H^s_T(x',\theta)),
\end{align}
we find $\mathcal{V}(x,\theta)\in L^2(x_d,H^{s+1}_T(x',\theta))\cap C(x_d,H^s_T(x',\theta))$, implying the partial sums of the series \eqref{a3} are bounded and converge in $H^s_T(x',\theta)$ uniformly with respect to $x_d\geq 0$. We will prove Proposition \ref{propA} by using these facts with the following lemma, which shows for finite truncations of expansions \eqref{a3} that the norm $|\cdot|_{H^s_T(x',\theta)}$ dominates $\left|\cdot|_{\theta=\theta(\theta_0,\xi_d)}\right|_{H^s_T(x',\theta_0)}$ independent of $(x_d,\xi_d)$.
\begin{lem}\label{1stlem}
Suppose $\mathcal{V}\in H^{s+1;2}_T(x,\theta)$ with series given by \eqref{a3}. For $\theta(\theta_0,\xi_d)$ as in \eqref{new15} and integers $M_1 \leq M_2$, $0<N_1 \leq N_2$, we have the following inequality:
\begin{align}\label{a9}
|\sum^{M_2}_{j=M_1}\sum^{N_2}_{k=N_1} \sum_{\alpha\in C_{j,k}} V_\alpha(x) e^{i\alpha\cdot\theta(\theta_0,\xi_d)}|^2_{H^s_T(x',\theta_0)}
\leq
|\sum^{M_2}_{j=M_1}\sum^{N_2}_{k=N_1}\sum_{\alpha \in C_{j,k}}V_\alpha(x)e^{i\alpha\cdot\theta}|^2_{H^s_T(x',\theta)}.
\end{align}
\end{lem}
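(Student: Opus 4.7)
The plan is to apply Parseval in $\theta_0$ on the left and in $\theta$ on the right, exploiting the identity
$$\alpha\cdot\theta(\theta_0,\xi_d)=\sum_i\alpha_i(\theta_0+\uomega_i\xi_d)=j\theta_0+z_k\xi_d\quad\text{for every }\alpha\in C_{j,k},$$
which follows directly from the defining conditions $\sum_i\alpha_i=j$ and $\alpha\cdot\uomega=z_k$. Using this, the expression inside the LHS norm becomes a trigonometric polynomial in $\theta_0$ alone:
$$F(x',\theta_0)=\sum_{j=M_1}^{M_2}\hat F_j(x)\,e^{ij\theta_0},\qquad \hat F_j(x):=\sum_{k=N_1}^{N_2}e^{iz_k\xi_d}\sum_{\alpha\in C_{j,k}}V_\alpha(x).$$
Specializing \eqref{a4} to $M=1$ rewrites $|F|^2_{H^s_T(x',\theta_0)}$ as a weighted $\ell^2$-sum in $j$ of Sobolev norms of $\hat F_j(x)$ in $x'$, while \eqref{a4} applied on $\bT^M$ rewrites the RHS as a weighted $\ell^2$-sum in $\alpha\in Z^{M;2}$ (in the given truncated range) of Sobolev norms of $V_\alpha(x)$ in $x'$.

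With both sides in this Parseval form, I would carry out the comparison term-by-term in $j$, drawing on three essential observations. First, since $\alpha\in Z^{M;2}$ forces $n_i\textrm{Im }\uomega_i\ge 0$ componentwise, the same reasoning as in Remark \ref{r1} yields $\textrm{Im }z_k=\textrm{Im }(\alpha\cdot\uomega)\ge 0$, and hence $|e^{iz_k\xi_d}|=e^{-\xi_d\textrm{Im }z_k}\le 1$ for $\xi_d\ge 0$; this is what prevents the exponential weights on the LHS from blowing up. Second, for $\alpha\in C_{j,k}$, $|j|=|\sum_i\alpha_i|\le\sqrt{M}\,|\alpha|$, so the $j$-weight $(1+|j|)^{2(s-|\beta|)}$ on the LHS is controlled by the $\alpha$-weight $(1+|\alpha|)^{2(s-|\beta|)}$ on the RHS up to a factor depending only on $M$ and $s$. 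Third, the uniform bound $|C_{j,k}|\le M(M-1)/2$ noted earlier controls the number of $\alpha$'s being bundled into each $\hat F_j$. The elementary inequality $|a_1+\dots+a_N|^2\le N\sum|a_i|^2$ applied once to the inner sum $\sum_{\alpha\in C_{j,k}}V_\alpha$ and once to the outer sum $\sum_{k=N_1}^{N_2}e^{iz_k\xi_d}(\cdots)$ converts $|\hat F_j|^2_{L^2(x')}$ into a sum over the modes contributing to the $j$th slice of the RHS.

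The main obstacle is the mode collapse on the LHS: distinct $\alpha\in C_{j,k}$ (same $(j,k)$) share the exponential $e^{ij\theta_0}e^{iz_k\xi_d}$, and distinct $k$'s with the same $j$ further collapse in the $\theta_0$-dependence, while the corresponding characters $e^{i\alpha\cdot\theta}$ remain orthogonal on $\bT^M$. The proof must absorb the Cauchy-Schwarz constants arising from this collapse into the Sobolev weights, which is precisely where $|e^{iz_k\xi_d}|\le 1$ and $|j|\le\sqrt M|\alpha|$ play a decisive role. The $x'$-derivative terms present no new difficulty, since $\partial^\beta_{x'}$ commutes with the substitution $\theta=\theta(\theta_0,\xi_d)$ and passes through the exponential factors untouched, so the bound for $|\beta|=0$ extends uniformly to all $|\beta|\le s$.
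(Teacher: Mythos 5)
Your setup coincides with the paper's: expand in Fourier modes, use the identity $\alpha\cdot\theta(\theta_0,\xi_d)=j\theta_0+z_k\xi_d$ on $C_{j,k}$ so that the left-hand side becomes a trigonometric polynomial in $\theta_0$ with coefficients $\hat F_j(x)=\sum_{k}e^{iz_k\xi_d}\sum_{\alpha\in C_{j,k}}V_\alpha(x)$, and then invoke $|e^{iz_k\xi_d}|\le 1$ and $|j|\le|\alpha|$. The divergence, and the gap, is exactly at the mode-collapse step you yourself single out as the main obstacle. The whole content of \eqref{a9} is the passage from the collapsed $j$-block $\bigl|\sum_{k=N_1}^{N_2}\sum_{\alpha\in C_{j,k}}\partial^\beta_{x'}V_\alpha\,e^{i\alpha\cdot\uomega\,\xi_d}\bigr|^2_{L^2(x')}$ to the decoupled sum $\sum_{k}\sum_{\alpha\in C_{j,k}}|\partial^\beta_{x'}V_\alpha\,e^{i\alpha\cdot\uomega\,\xi_d}|^2_{L^2(x')}$ \emph{with constant one} (this is the paper's step \eqref{a6}--\eqref{a7}); the bounds $|e^{i\alpha\cdot\uomega\,\xi_d}|\le 1$ and $(1+|j|)\le(1+|\alpha|)$ only enter afterwards. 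You replace that step by $|a_1+\dots+a_N|^2\le N\sum_i|a_i|^2$, applied once over $\alpha\in C_{j,k}$ (harmless, since $|C_{j,k}|\le M(M-1)/2$) and once over $k\in\{N_1,\dots,N_2\}$, which produces a factor of order $N_2-N_1+1$. The claim that these factors can be ``absorbed into the Sobolev weights'' via $|e^{iz_k\xi_d}|\le1$ and $|j|\le\sqrt M|\alpha|$ does not hold up: both are pure $\le1$ bounds, with no decay in $k$ or $\alpha$, so they cannot compensate a factor that grows with the truncation range. As written, your argument yields only $\mathrm{LHS}\le C(M,s)(N_2-N_1+1)\,\mathrm{RHS}$, which is neither \eqref{a9} nor adequate for the way the lemma is used (the Cauchy-in-$n$ argument for the partial sums \eqref{ab1} in Proposition \ref{propA} requires a bound uniform over the blocks).

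Moreover, no termwise argument of the kind you attempt can recover constant one at the same index $s$: already at $\xi_d=0$, a block containing $\alpha=(1,0)$ and $\alpha=(0,1)$ with equal coefficients (same $j=1$, different $z_k$) makes the collapsed $L^2(x')$ block twice the decoupled sum, while the weights $(1+|j|)$ and $(1+|\alpha|)$ agree; so the loss caused by the collapse must be paid for by genuine summability, not by the $\le1$ bounds you invoke. The only such mechanism in reach is a derivative gap between the two sides: for fixed $j$ the $\alpha\in Z^{M;2}$ with $\sum_i\alpha_i=j$ form at most $M(M-1)/2$ one-parameter families along which $(1+|\alpha|)^{-2}$ is summable uniformly in $j$, and Cauchy--Schwarz against these weights bounds the collapsed block by $C\sum_{k,\alpha}(1+|\alpha|)^{2}|\partial^\beta_{x'}V_\alpha|^2_{L^2(x')}$, i.e.\ by the $H^{s+1}(x',\theta)$ norm instead of the $H^{s}(x',\theta)$ norm. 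That route would give an estimate of exactly the type needed for \eqref{a2} (note Proposition \ref{propA} has $H^{s+1}_T$ on the right), but it is a different statement from \eqref{a9}. The paper's own proof, by contrast, asserts the collapsed-to-decoupled passage with constant one outright at \eqref{a6}--\eqref{a7}; your proposal neither reproduces that assertion nor supplies a working substitute, so as it stands it does not prove the lemma.
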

\begin{proof}
We estimate
\begin{eqnarray}
|\sum^{M_2}_{j=M_1}\sum^{N_2}_{k=N_1} \sum_{\alpha\in C_{j,k}} V_\alpha(x) e^{i\alpha\cdot\theta(\theta_0,\xi_d)}|^2_{H^s_T(x',\theta_0)} 
&=& |\sum^{M_2}_{j=M_1}\sum^{N_2}_{k=N_1}(\sum_{\alpha \in C_{j,k}}V_\alpha(x)e^{i\alpha\cdot\omega\xi_d})e^{ij\theta_0}|^2_{H^s_T(x',\theta_0)}, \nonumber \\
&=& \sum^{M_2}_{j=M_1}\sum_{|\beta |\leq s}|(\sum^{N_2}_{k=N_1}\sum_{\alpha \in C_{j,k}}\partial^\beta_{x'}V_\alpha(x)e^{i\alpha\cdot\omega\xi_d})|^2_{L^2(x')}(1+|j|)^{2(s-|\beta |)}, \label{a6} \\
&\leq & \sum^{M_2}_{j=M_1}\sum^{N_2}_{k=N_1} \sum_{\alpha \in C_{j,k}}\sum_{|\beta |\leq s}|\partial^\beta_{x'}V_\alpha(x)e^{i\alpha\cdot\omega\xi_d}|^2_{L^2(x')}(1+|j|)^{2(s-|\beta |)}. \label{a7}
\end{eqnarray}
For \eqref{a6}, we used the formula which that in \eqref{a4} generalizes. For $\alpha\in C_{j,k}$, we have $|j|=|\sum_i\alpha_i|\leq|\alpha|$ and $\textrm{Im}(\alpha\cdot\omega)\geq 0$, so for all $\xi_d\geq 0$, the sum in \eqref{a7} is bounded by
\begin{align}\label{a8}
|\sum^{M_2}_{j=M_1}\sum^{N_2}_{k=N_1}\sum_{\alpha \in C_{j,k}}V_\alpha(x)e^{i\alpha\cdot\theta}|^2_{H^s_T(x',\theta)}
=\sum^{M_2}_{j=M_1}\sum^{N_2}_{k=N_1}\sum_{\alpha \in C_{j,k}}\sum_{|\beta |\leq s}|\partial^\beta_{x'}V_\alpha(x)|^2_{L^2(x')}(1+|\alpha|)^{2(s-|\beta |)},
\end{align}
where the equality \eqref{a8} follows from the formula in \eqref{a4}.
\end{proof}

With the estimate of Lemma \ref{1stlem}, we are ready to prove that elements of $H^{s+1;2}_T$ yield elements with expansions converging in $\mathcal{E}^s_T$ upon the substitution $\theta=\theta(\theta_0,\xi_d)$.

\begin{prop}\label{propA}
Let $\mathcal{V}\in H^{s+1;2}_T(x,\theta)$ with expansion \eqref{a3} and set $\mathcal{U}=\mathcal{V}|_{\theta=\theta(\theta_0,\xi_d)}$ for $\theta(\theta_0,\xi_d)$ as in \eqref{new15}. Then we have $\mathcal{U}\in\mathcal{E}^s_T$:
\begin{align}\label{a2}
|\mathcal{U}|_{\mathcal{E}^s_T}\leq C |\mathcal{V}|_{H^{s+1}_T},
\end{align}
and we have convergence in $\mathcal{E}^s_T$ to $\mathcal{U}$ of finite partial sums, independent of arrangement, of the series
\begin{align}\label{a18}
\mathcal{U}(x,\theta_0,\xi_d)=\sum_{\alpha\in Z^{M;2}}V_\alpha(x)e^{i\alpha\cdot\theta(\theta_0,\xi_d)}.
\end{align}
\end{prop}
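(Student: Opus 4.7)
The strategy is to promote Lemma \ref{1stlem} to a norm-level estimate $|\mathcal{U}_F|_{\mathcal{E}^s_T}\leq C|\mathcal{V}_F|_{H^{s+1}_T}$ valid for every finite partial sum indexed by $F\subset Z^{M;2}$, and then to invoke unconditional convergence of $\mathcal{V}$'s Fourier series in $H^{s+1}_T$.

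First I would unpack the target norm: by \eqref{d21uz} and \eqref{ESpace} it suffices, uniformly in $\xi_d\geq 0$, to control $\sup_{x_d\geq 0}|\,\cdot\,|_{H^s_T(x',\theta_0)}$ and $\|\,\cdot\,\|_{L^2(x_d,H^{s+1}_T(x',\theta_0))}$. Next I would observe that the proof of Lemma \ref{1stlem} is insensitive to two features of its statement. First, the exponent $s$ may be replaced throughout by $s+1$: the only step constraining $s$ is the bound $(1+|j|)^{2(s-|\beta|)}\leq(1+|\alpha|)^{2(s-|\beta|)}$, which only requires the exponent to be nonnegative. Second, the rectangular index set $\{M_1\leq j\leq M_2,\;N_1\leq k\leq N_2\}$ may be replaced by an arbitrary finite $F\subset Z^{M;2}$: the proof proceeds by grouping according to $j=\sum_i\alpha_i$ and uses only $|j|\leq|\alpha|$ together with $|e^{i\alpha\cdot\omega\xi_d}|\leq 1$ for $\xi_d\geq 0$ and $\alpha\in Z^{M;2}$.

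Writing $\mathcal{V}_F=\sum_{\alpha\in F}V_\alpha(x)e^{i\alpha\cdot\theta}$ and $\mathcal{U}_F=\mathcal{V}_F|_{\theta=\theta(\theta_0,\xi_d)}$, this extended form of Lemma \ref{1stlem} yields, for each fixed $(x_d,\xi_d)$ with $\xi_d\geq 0$ and each $s'\in\{s,s+1\}$,
\[
|\mathcal{U}_F(\cdot,\cdot,x_d,\xi_d)|_{H^{s'}_T(x',\theta_0)}\leq|\mathcal{V}_F(\cdot,\cdot,x_d)|_{H^{s'}_T(x',\theta)}.
\]
Taking the supremum in $x_d$, together with the Sobolev embedding $H^{s+1}(\overline{\bR}^{d+1}_+\times\mathbb{T}^M)\hookrightarrow C(x_d,H^s_T(x',\theta))$, controls the $C(x_d,H^s_T)$ piece; squaring and integrating in $x_d$ controls the $L^2(x_d,H^{s+1}_T)$ piece. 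Supremizing over $\xi_d\geq 0$ and combining then gives $|\mathcal{U}_F|_{\mathcal{E}^s_T}\leq C|\mathcal{V}_F|_{H^{s+1}_T}$.

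Finally, the Fourier series of $\mathcal{V}\in H^{s+1}(\overline{\bR}^{d+1}_+\times\mathbb{T}^M)$ converges unconditionally in $H^{s+1}_T$, as is standard for Hilbert-space Fourier expansions on tori; hence for any $\eps>0$ there is a finite $F_\eps\subset Z^{M;2}$ with $|\mathcal{V}_G|_{H^{s+1}_T}<\eps/C$ for every finite $G$ disjoint from $F_\eps$, and by the estimate above $|\mathcal{U}_G|_{\mathcal{E}^s_T}<\eps$. This Cauchy criterion yields both \eqref{a2} (upon exhausting $Z^{M;2}$) and arrangement-independent convergence of finite partial sums of \eqref{a18} in $\mathcal{E}^s_T$; the limit is identified with $\mathcal{U}$ by comparing pointwise values at real $\theta_0$ and $\xi_d\geq 0$. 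The only real work is verifying the extension of Lemma \ref{1stlem} to arbitrary finite index sets and to exponent $s+1$; once that routine extension is in hand, the remainder is automatic.
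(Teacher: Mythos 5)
Your proposal is correct and follows essentially the same route as the paper: the paper's own proof of Proposition \ref{propA} likewise rests on Lemma \ref{1stlem} --- applied with $s+1$ in place of $s$ and integrated in $x_d$ for the $L^2(x_d,H^{s+1}_T)$ piece, and noted at the end to work for arbitrary finite sets $\cB_n\nearrow Z^{M;2}$, which is exactly your extension to general finite index sets --- combined with the uniform-in-$x_d$ Cauchy property of the partial sums of \eqref{a3}, which is your unconditional-convergence/Cauchy-criterion step in different packaging. The only cosmetic difference is that you record the finite-sum bound as $|\mathcal{U}_F|_{\mathcal{E}^s_T}\leq C|\mathcal{V}_F|_{H^{s+1}_T}$ before passing to the limit, whereas the paper transfers the Cauchy property directly and reads off \eqref{a2} afterward.
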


\begin{proof}[Proof of Proposition \ref{propA}]
We explicitly show the convergence of the finite partial sums
\begin{align}\label{ab1}
\sum_{ \begin{subarray}{c} -n \leq j \leq n\\ 1 \leq k \leq n\end{subarray}}\sum_{\alpha\in C_{j,k}} V_\alpha(x) e^{i\alpha\cdot\theta(\theta_0,\xi_d)}.
\end{align}
For integers $n_1$, $n_2$ with $n_1\leq n_2$, an application of Lemma \ref{1stlem} gives
\begin{align}\label{ab2}
|\sum_{ \begin{subarray}{} n_1 \leq |j| \leq n_2 \\ n_1 \leq k \leq n_2 \end{subarray}}\sum_{\alpha\in C_{j,k}} V_\alpha(x) e^{i\alpha\cdot\theta(\theta_0,\xi_d)}|^2_{H^s_T(x',\theta_0)}
\leq
|\sum_{ \begin{subarray}{} n_1 \leq |j| \leq n_2 \\ n_1 \leq k \leq n_2 \end{subarray}}\sum_{\alpha \in C_{j,k}}V_\alpha(x)e^{i\alpha\cdot\theta}|^2_{H^s_T(x',\theta)}.
\end{align}
It follows from \eqref{ab2} that since the sequence of the partial sums
\begin{align}
\sum_{ \begin{subarray}{c} -n \leq j \leq n\\ 1 \leq k \leq n\end{subarray}}\sum_{\alpha\in C_{j,k}} V_\alpha(x) e^{i\alpha\cdot\theta}
\end{align}
is Cauchy in $H^s_T(x',\theta)$ uniformly with respect to $(x_d,\xi_d)$, so is the sequence \eqref{ab1} in $H^s_T(x',\theta_0)$, and thus we have convergence. We similarly get convergence of the \eqref{ab1} in $L^2(x_d,H^{s+1}_T(x',\theta_0))$ after integrating the inequality \eqref{ab2} (with $s+1$ in place of $s$) with respect to $x_d$ and noting the right hand side is independent of $\xi_d$. Hence, the \eqref{ab1} converge in $\mathcal{E}^s_T$. It is not hard to show the limit is in fact $\mathcal{V}|_{\theta=\theta(\theta_0,\xi_d)}$, and the estimate \eqref{a2} easily follows. The same proof works for arbitrarily ordered sums, where one instead considers finite $\cB_n \nearrow Z^{M;2}$ and similarly gets that the sequence of the
\begin{align}\label{new33}
\sum_{\alpha\in \cB_n} V_\alpha(x) e^{i\alpha\cdot\theta(\theta_0,\xi_d)}
\end{align}
is Cauchy in the desired space because the sequence of the $\sum_{\alpha\in \cB_n} V_\alpha(x) e^{i\alpha\cdot\theta}$ is Cauchy.
\end{proof}

\subsection{Error analysis}\label{erroranalysis}
The following proposition is used to make precise the notion that $\sigma_{p,l}(x,\theta_p)$ and $\sigma^n_{p,l}(x,\theta_p)$ are approximate solutions of \eqref{ellipIntEqn} and \eqref{b1a}-\eqref{b1d}, respectively.
\begin{prop}\label{new3}
Let $R(x,\theta)\in H^{s;1}_T(x,\theta)$ have the property that it is polarized by the elliptic projector $\bE_e$ defined in \eqref{new14}, i.e. that $\bE_e(R)=R$, and suppose
\begin{align}
R|_{x_d=0}=0.
\end{align}
Then
\begin{align}\label{new13}
\lim_{\epsilon\rightarrow 0}|R(x,\theta_0+\uomega_1\xi_d,\ldots,\theta_0+\uomega_M\xi_d)|_{\xi_d=\frac{x_d}{\eps}}|_{E^{s-1}_T(x,\theta_0)}=0.
\end{align}
\end{prop}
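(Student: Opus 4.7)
The plan is to exploit two features of the elliptic projection: each summand in the decomposition $\bE_e R = R$ depends on only a single phase variable $\theta_m$ with $m\in\cP\cup\cN$, and substituting $\theta_m=\theta_0+\uomega_m x_d/\eps$ into such a summand multiplies each Fourier mode by a factor $e^{-|\alpha|\delta_m x_d/\eps}$ with $\delta_m:=|\mathrm{Im}\,\uomega_m|>0$ (this uses $\alpha\in Z_m\setminus 0$, so $\alpha$ and $\mathrm{Im}\,\uomega_m$ have the same sign). The vanishing $R|_{x_d=0}=0$ is used only to control the $C(x_d,H^{s-1})$ component of the $E^{s-1}_T$ norm; the $L^2(x_d,H^s)$ component will follow from dominated convergence alone.

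First I would use $\bE_e R=R$ to write $R(x,\theta)=\sum_{m\in\cP\cup\cN}R_m(x,\theta_m)$ with $R_m(x,\theta_m)=\sum_{\alpha\in Z_m\setminus 0}R_{m,\alpha}(x)e^{i\alpha\theta_m}\in H^s_T(x,\theta_m)$. Because the summands have pairwise disjoint Fourier spectra in $(\theta_1,\dots,\theta_M)$, the hypothesis $R|_{x_d=0}=0$ forces each $R_m|_{x_d=0}=0$. Substituting $\theta_m=\theta_0+\uomega_m x_d/\eps$ and reading off the $H^t(x',\theta_0)$ norm mode by mode yields the pointwise (in $x_d$) bound
\begin{equation*}
\bigl|R_m(x,\theta_0+\uomega_m x_d/\eps)\bigr|_{H^t(x',\theta_0)}^2
\;\le\; e^{-2\delta_m x_d/\eps}\,\bigl|R_m(\cdot,x_d,\cdot)\bigr|_{H^t(x',\theta_m)}^2,
\end{equation*}
valid for every $t\ge 0$, since every $\alpha\in Z_m\setminus 0$ satisfies $|\alpha|\ge 1$.

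For the $L^2(x_d,H^s)$ part I apply dominated convergence twice. At each fixed $x_d>0$, for each $\alpha$ the factor $e^{-2|\alpha|\delta_m x_d/\eps}\to 0$ as $\eps\to 0$, so by DCT in the Fourier index (dominated by $|R_{m,\alpha}(\cdot,x_d)|^2_{H^s_\alpha(x')}$, summable to $|R_m|_{H^s(x',\theta_m)}^2(x_d)<\infty$ a.e.) the pointwise $H^s(x',\theta_0)$ norm tends to zero. A second application of DCT in $x_d$, with dominant function $|R_m|_{H^s(x',\theta_m)}^2(x_d)\in L^1(x_d)$ since $R_m\in L^2(x_d,H^s(x',\theta_m))$, gives the desired convergence.

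For the $C(x_d,H^{s-1})$ part the vanishing at the boundary is essential, because at $x_d=0$ the exponential factor equals $1$. I would use that the trace map furnishes an embedding $H^s(x,\theta_m)\hookrightarrow C_b([0,\infty),H^{s-1/2}(x',\theta_m))$, so $x_d\mapsto|R_m(\cdot,x_d,\cdot)|_{H^{s-1}(x',\theta_m)}$ is continuous and bounded, with value $0$ at $x_d=0$. Given $\eta>0$, fix $\delta>0$ so that $|R_m|_{H^{s-1}(x',\theta_m)}(x_d)<\eta$ on $[0,\delta]$; then on $[0,\delta]$ the pointwise bound above with $t=s-1$ gives $|\mathcal R_{m,\eps}|_{H^{s-1}}<\eta$ uniformly in $\eps$, while on $[\delta,\infty)$ it gives $|\mathcal R_{m,\eps}|_{H^{s-1}}\le e^{-\delta_m\delta/\eps}\sup_{x_d}|R_m|_{H^{s-1}}$, which tends to $0$ as $\eps\to 0$. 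Letting $\eta\to 0$ shows $|\mathcal R_{m,\eps}|_{C(x_d,H^{s-1})}\to 0$. Summing the two parts over the finitely many $m\in\cP\cup\cN$ completes the proof.

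The main obstacle is bookkeeping the regularity: one has to verify that the ``naive'' pointwise-in-$x_d$ estimate, together with the trace continuity inherited from $R\in H^s$, actually dominates things uniformly in $\eps$ at the right Sobolev index; the asymmetry $s-1$ versus $s$ between the two components of $E^{s-1}_T$ matches exactly the half-derivative loss from tracing to $\{x_d=c\}$, which is why the argument does not need a Hardy-type factorization $R=x_d W$ (an approach that would require extra regularity to recover $H^s$ in the tangential directions after dividing by $x_d$).
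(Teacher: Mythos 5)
Your proposal is correct and follows essentially the same route as the paper: decompose $R$ into single-phase elliptic pieces $R_m$ (noting, as you do, that the disjoint spectra force each $R_m|_{x_d=0}=0$), derive the pointwise-in-$x_d$ exponential bound $e^{-2\delta_m x_d/\eps}$ on the $H^t(x',\theta_0)$ norm, and then split the $x_d$ axis into a near-boundary region where the vanishing at $x_d=0$ controls things and a far region where the exponential factor wins. The only cosmetic differences are that the paper splits at $x_d=\sqrt{\eps}$ rather than at a fixed $\delta$ followed by letting $\eta\to 0$, and the paper treats the $L^2(x_d,H^s)$ component by a direct Cauchy--Schwarz estimate yielding an explicit $O(\sqrt{\eps})$ rate, whereas your double dominated-convergence argument gives the limit without a rate; both are fine.
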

\begin{proof}
It suffices to consider the case $R(x,\theta_p)\in H^s_T(x,\theta_p)$, $\bE_p(R)=R$, for some $p\in\mathcal{P}\cup\mathcal{N}$, and show
\begin{align}
\lim_{\epsilon\rightarrow 0}|R(x,\theta_0+\uomega_p\xi_d)|_{\xi_d=\frac{x_d}{\epsilon}}|_{E^{s-1}_T(x,\theta_0)}=0.
\end{align}
First, we show
\begin{align}\label{new0}
\lim_{\epsilon\rightarrow 0}\sup_{x_d\geq 0}|R(x,\theta_0+\uomega_p\xi_d)|_{\xi_d=\frac{x_d}{\epsilon}}|_{H^{s-1}_T(x',\theta_0)}=0.
\end{align}
Note $R$ has an expansion of the form
\begin{align}
R(x,\theta_p)=\sum_{j\in Z_p\setminus 0} a_j(x)e^{ij\theta_p}.
\end{align}
Thus, noting $R(x,\theta_0)\in H^s_T(x,\theta_0)\subset C(x_d,H^{s-1}_T(x',\theta_0))$, fixing $x_d$, we get the norm
\begin{align}
|R(x,\theta_p)|^2_{H^{s-1}_T(x',\theta_p)}=\sum_{j\in Z_p\setminus 0}\sum_{|\beta |\leq s-1}|\partial^\beta_{x'}a_j(x)|^2_{L^2(x')}(1+|j|)^{2(s-1-|\beta |)}.
\end{align}
We let $\xi_d=\frac{x_d}{\epsilon}$ and find
\begin{align}
R(x,\theta_0+\uomega_p\xi_d)=\sum_{j\in Z_p\setminus 0} ( e^{-\textrm{Im}(j\uomega_p)\xi_d}e^{i\textrm{Re}(j\uomega_p)\xi_d}a_j(x))e^{ij\theta_0}.
\end{align}
Then it follows
\begin{eqnarray}
|R(x,\theta_0+\uomega_p\xi_d)|^2_{H^{s-1}_T(x',\theta_0)}
&=&\sum_{j\in Z_p\setminus 0}\sum_{|\beta |\leq s-1}(e^{-\textrm{Im}(j\uomega_p)\xi_d})^2|\partial^\beta_{x'}a_j(x)|^2_{L^2(x')}(1+|j|)^{2(s-1-|\beta |)}\nonumber\\
&\leq& e^{-2|\textrm{Im}\,\uomega_p|\xi_d}\sum_{j\in Z_p\setminus 0}\sum_{|\beta |\leq s-1}|\partial^\beta_{x'}a_j(x)|^2_{L^2(x')}(1+|j|)^{2(s-1-|\beta |)}\\
&=&  e^{-2|\textrm{Im}\,\uomega_p|\xi_d} |R(x,\theta_0)|^2_{H^{s-1}_T(x',\theta_0)}
\end{eqnarray}
Now we show that, as $\epsilon$ tends to zero,
\begin{align}\label{new1}
\sup_{x_d\in[0,\sqrt{\epsilon}]}|R(x,\theta_0+\uomega_p\frac{x_d}{\epsilon})|_{H^{s-1}_T(x',\theta_0)}\rightarrow 0,
\end{align}
and
\begin{align}\label{new2}
\sup_{x_d\geq\sqrt{\epsilon}}|R(x,\theta_0+\uomega_p\frac{x_d}{\epsilon})|_{H^{s-1}_T(x',\theta_0)}\rightarrow 0.
\end{align}
Set $h(x_d)=R(x',x_d,\theta_0)\in H^{s-1}_T(x',\theta_0)$. To see \eqref{new1}, observe that the term is bounded by
\begin{align}
\sup_{x_d\in[0,\sqrt{\epsilon}]}e^{-2|\textrm{Im}\,\uomega_p|\frac{x_d}{\epsilon}} |h(x_d)|^2_{H^{s-1}_T(x',\theta_0)}
\leq\sup_{x_d\in[0,\sqrt{\epsilon}]}|h(x_d)|^2_{H^{s-1}_T(x',\theta_0)},
\end{align}
which converges to $0$ as $\epsilon$ tends to zero, since $h \in C(x_d,H^{s-1}_T(x',\theta_0))$ with $h(0)=0$. That \eqref{new2} holds follows from the fact that this term is bounded by
\begin{align}
\sup_{x_d\geq\sqrt{\epsilon}}e^{-2|\textrm{Im}\,\uomega_p|\frac{x_d}{\epsilon}} |h(x_d)|^2_{H^{s-1}_T(x',\theta_0)}
\leq \sup_{x_d\geq 0}|h(x_d)|^2_{H^{s-1}_T(x',\theta_0)}e^{-2|\textrm{Im}\,\uomega_p|\frac{1}{\sqrt{\epsilon}}},
\end{align}
which also converges to $0$ as $\epsilon$ tends to zero. Now we check that
\begin{align}
\lim_{\epsilon\rightarrow 0}|R(x,\theta_0+\uomega_p\xi_d)|_{\xi_d=\frac{x_d}{\epsilon}}|_{L^2(x_d,H^s_T(x',\theta_0))}=0.
\end{align}
It is not hard to show that
\begin{align}
|R(x,\theta_0+\uomega_p\frac{x_d}{\epsilon})|^2_{L^2(x_d,H^s_T(x',\theta_0))}
\leq |e^{-|\textrm{Im}\,\uomega_p|\frac{x_d}{\epsilon}}|_{L^2} |R(x,\theta_0)|^2_{L^2(x_d,H^s_T(x',\theta_0))}\leq D \sqrt{\epsilon},
\end{align}
which completes the proof.
\end{proof}

 Now we are ready to show that, as $\epsilon\rightarrow 0$, the approximate solution $u^a_\epsilon$ converges to the exact solution $u_\epsilon$ in $L^\infty$. This is a corollary of the following theorem.
\begin{theo}\label{e1}
For $M_0=2(d+2)+1$ and $s\geq 1+[M_0+\frac{d+1}{2}]$ let $G(x',\theta_0)\in H^{s+1}_T$ have compact support in $x'$ and vanish in $t\leq 0$.  Let $U_\eps(x,\theta_0)\in E^s_{T_0}$ be the exact solution to the singular system for $0<\eps\leq \eps_0$ given by Theorem 7.1 of \cite{williams2}, let $\cV^0\in \bH^{s+1}_{T_0}$ be the profile given by \eqref{new65}, and let $\cU^0\in\cE^s_{T_0}$ be defined by
\begin{align}\label{e2}
\cU^0(x,\theta_0,\xi_d)=\cV^0(x,\theta_0+\uomega_1\xi_d,\dots,\theta_0+\uomega_M\xi_d).
\end{align}
Here $0 < T_0\leq T$ is the minimum of the existence times for the quasilinear problems \eqref{new61} and \eqref{fullConditions}.
Define
\begin{align}\label{e3}
\cU^0_\eps(x,\theta_0):=\cU^0(x,\theta_0,\frac{x_d}{\eps}).
\end{align}
The family $\cU^0_\eps$ is uniformly bounded in $E^s_{T_0}$ for $0<\eps\leq \eps_0$ and satisfies
\begin{align}\label{e4}
|U_\eps-\cU^0_\eps|_{E^{s-1}_{T_0}}\to 0 \text{ as }\eps\to 0.
\end{align}
\end{theo}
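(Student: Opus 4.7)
The plan is to prove Theorem \ref{e1} by simultaneous Picard iteration on the singular system \eqref{e5} and the iterated profile system \eqref{fullConditionsIt}, following the blueprint of the introduction. Let $U^n_\eps \in E^s_{T_0}$ denote the singular-system iterates from Theorem 7.1 of \cite{williams2}, which are uniformly bounded in $E^s_{T_0}$ and satisfy $U^n_\eps \to U_\eps$ in $E^{s-1}_{T_0}$ uniformly in $\eps \in (0,\eps_0]$, and let $\cV^{0,n} \in \bH^{s+1}_{T_0}$ be the profile iterates of Definition \ref{ansatzDefn}, satisfying \eqref{fullConditionsIt} with elliptic residual $R^{n+1}$ (Remark \ref{new5}) and converging to $\cV^0$ in $\bH^s_{T_0}$. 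Setting
\begin{align*}
\cU^{0,n}_\eps(x,\theta_0) := \cV^{0,n}\bigl(x,\theta_0 + \uomega_1 \tfrac{x_d}{\eps},\dots,\theta_0 + \uomega_M \tfrac{x_d}{\eps}\bigr),
\end{align*}
Proposition \ref{propA} yields both the uniform bound $|\cU^0_\eps|_{E^s_{T_0}} \le C|\cV^0|_{\bH^{s+1}_{T_0}}$ (which handles the uniform-boundedness claim of the theorem) and $\cU^{0,n}_\eps \to \cU^0_\eps$ in $E^{s-1}_{T_0}$ uniformly in $\eps$. A triangle inequality then reduces \eqref{e4} to proving, for each fixed $n$,
\begin{align*}
|U^n_\eps - \cU^{0,n}_\eps|_{E^{s-1}_{T_0}} \longrightarrow 0 \quad \text{as } \eps \to 0,
\end{align*}
which I would establish by induction on $n$ (the base case $n=0$ is trivial as both iterates vanish).

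For the induction step I would apply the operator $\bA(\eps U^n_\eps)$ from \eqref{e5}(a) to the difference $\cU^{0,n+1}_\eps - U^{n+1}_\eps$ and invoke the linear singular-system estimate of Proposition \ref{e9} from \cite{williams2}. The obstacle is that plugging $\cU^{0,n+1}_\eps$ directly into \eqref{e5}(a) leaves an $O(1)$ residual: the $O(\eps^{-1})$ piece vanishes by \eqref{fullConditionsIt}(a), but \eqref{fullConditionsIt}(b) only controls the $\bE$-image of the $O(1)$ piece, not the piece itself. To kill this $O(1)$ term I would build a corrector: replace $\cV^{0,n+1}$ and $\cV^{0,n}$ by finite trigonometric polynomials $\cV^{0,n+1}_p$, $\cV^{0,n}_p$ approximating them in $\bH^{s+1}_{T_0}$, and then apply Proposition \ref{new7} to produce $\cV^1_p \in H^{s;2}_{T_0}$ solving
\begin{align*}
\cL(\partial_\theta)\cV^1_p = -(I - \bE^\flat)\bigl(\tilde{L}(\partial_x)\cV^{0,n+1}_p + \cM(\cV^{0,n}_p)\partial_\theta\cV^{0,n+1}_p - F(0)\cV^{0,n}_p\bigr).
\end{align*}
Using the identity $\Phi\circ(I-\bE^\flat) = \Phi\circ(I-\bE)$ from Remark \ref{new18}, the corrected polynomial profile $\cU^{0,n+1}_{p,\eps} + \eps\,\cU^1_{p,\eps}$ plugged into \eqref{e5}(a) leaves only three types of error: an honest $O(\eps)$ remainder (from higher-order Taylor terms and $\bA(\eps U^n_\eps) - \bA(0)$), a polynomial-truncation error controlled uniformly in $\eps$ by Proposition \ref{propA}, and the elliptic residual $\Phi(R^{n+1})$ that survives from \eqref{fullConditionsIt}(b).

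The main obstacle is the residual $\Phi(R^{n+1})$, which is the price paid for only approximately solving the complex transport equations. This is exactly where Proposition \ref{new3} is decisive: by Remark \ref{new5}, $R^{n+1}$ is elliptically polarized ($\bE_e R^{n+1} = R^{n+1}$) and vanishes on $\{x_d=0\}$, so Proposition \ref{new3} delivers
\begin{align*}
\bigl|R^{n+1}\bigl(x,\theta_0 + \uomega_1 \tfrac{x_d}{\eps},\dots,\theta_0 + \uomega_M \tfrac{x_d}{\eps}\bigr)\bigr|_{E^{s-2}_{T_0}} \longrightarrow 0 \quad \text{as } \eps\to 0.
\end{align*}
Feeding all sources into Proposition \ref{e9}, using the inductive hypothesis at level $n$ together with the Williams uniform $E^s_{T_0}$-bound on $U^n_\eps$ to absorb the $\eps$-dependent coefficient $\bA(\eps U^n_\eps)$ (and noting $|\eps\,\cU^1_{p,\eps}|_{E^{s-1}_{T_0}} = O(\eps)$), I obtain $|\cU^{0,n+1}_{p,\eps} - U^{n+1}_\eps|_{E^{s-1}_{T_0}} \to 0$. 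A final triangle-inequality step — first choosing $p$ large so that the polynomial-truncation error is small uniformly in $\eps$, then letting $\eps\to 0$ — upgrades this to $|\cU^{0,n+1}_\eps - U^{n+1}_\eps|_{E^{s-1}_{T_0}} \to 0$, closing the induction and proving \eqref{e4}.
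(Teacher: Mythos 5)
Your proposal follows essentially the same route as the paper's proof: simultaneous Picard iteration reducing \eqref{e4} to the fixed-$n$ comparison $|U^n_\eps-\cU^{0,n}_\eps|_{E^{s-1}_{T_0}}\to 0$, finite trigonometric polynomial approximation together with the corrector $\cV^1_p$ supplied by Proposition \ref{new7} and the identity $\Phi\circ(I-\bE^\flat)=\Phi\circ(I-\bE)$, Proposition \ref{new3} for the elliptically polarized residual $R^{n+1}$ vanishing at $x_d=0$, and the linear estimate of Proposition \ref{e9} to close the induction. The only cosmetic differences are that the paper freezes the coefficients of the linearized singular operator at $\eps\,\cU^{0,n}_{p,\eps}$ rather than $\eps\,U^n_\eps$, and that since $R^{n+1}\in H^{s;1}_{T_0}$ Proposition \ref{new3} actually yields decay in $E^{s-1}_{T_0}$ (not merely $E^{s-2}_{T_0}$ as you wrote), which is precisely the level at which the induction is run.
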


Upon evaluating our placeholder $\xi_d=\frac{x_d}{\eps}$ in the argument of $\cU^0\in\cE^s_T$, we get the function $\cU^0_\eps$ in $E^s_T$, the same space containing the solutions of the singular system. Indeed, the $E^s_T$ norm is that of the estimate of Proposition \ref{e9}, key to our proof of Theorem \ref{e1}, and the norm in which we show our approximate solution $\cU^0_\eps$ is close to the exact solution of the singular system.
\begin{lem}\label{e11}
For $m\geq 0$ suppose $\cV(x,\theta)\in \bH^{m+1}_T$,  $\bE\cV=\cV$, and set $\cU(x,\theta_0,\xi_d)=\cV(x,\theta_0+\uomega_1\xi_d,\dots,\theta_0+\uomega_M\xi_d)$.
Then, setting $\cU_{\eps}(x,\theta_0)=\cU(x,\theta_0,\frac{x_d}{\eps})$,
\begin{align}\label{e13}
|\cU_{\eps}|_{E^m_T}\leq |\cU|_{\cE^m_T}.
\end{align}
\end{lem}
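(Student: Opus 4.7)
The plan is to bound the two pieces of the $E^m_T$-norm of $\cU_\eps$ separately by $|\cU|_{\cE^m_T}$, namely the $C(x_d,H^m_T(x',\theta_0))$ piece and the $L^2(x_d,H^{m+1}_T(x',\theta_0))$ piece.

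For the sup piece the argument is direct and does not use the polarization hypothesis. For each $x_d\geq 0$, since $x_d/\eps \geq 0$, we have $|\cU_\eps(\cdot,x_d,\cdot)|_{H^m_T(x',\theta_0)} = |\cU(\cdot,x_d,\cdot,x_d/\eps)|_{H^m_T} \leq \sup_{\xi_d\geq 0}|\cU(\cdot,x_d,\cdot,\xi_d)|_{H^m_T}$. Taking the sup in $x_d$ and swapping the order of suprema yields
\begin{equation*}
|\cU_\eps|_{C(x_d,H^m_T)} \leq \sup_{\xi_d\geq 0}\sup_{x_d\geq 0}|\cU(\cdot,x_d,\cdot,\xi_d)|_{H^m_T(x',\theta_0)} = \sup_{\xi_d\geq 0}|\cU(\cdot,\cdot,\xi_d)|_{C(x_d,H^m_T)} \leq |\cU|_{\cE^m_T}.
\end{equation*}

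For the $L^2$ piece the hypothesis $\bE\cV=\cV$ becomes essential, since it places $\cV$ in $H^{s+1;1}$ and thereby forces its Fourier expansion in $\theta=(\theta_1,\dots,\theta_M)$ to be supported only on single-mode multi-indices, giving $\cV(x,\theta)=V_0(x)+\sum_m\sum_{n\in Z_m\setminus 0}V_{ne_m}(x)e^{in\theta_m}$. Substituting $\theta_i=\theta_0+\uomega_i\xi_d$ and applying Parseval in $\theta_0$, the $j$-th Fourier coefficient (for $j\neq 0$) of $\cU(x,\theta_0,\xi_d)$ equals $\sum_{m:j\in Z_m}V_{je_m}(x)e^{ij\uomega_m\xi_d}$. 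The defining property of $Z_m$ gives $n\,\mathrm{Im}(\uomega_m)\geq 0$ for $n\in Z_m$, hence $|e^{ij\uomega_m\xi_d}|=e^{-j\mathrm{Im}(\uomega_m)\xi_d}\leq 1$ for every $\xi_d\geq 0$. Using this pointwise boundedness of the exponential factors together with the single-mode Fourier structure, I would bound $|\cU_\eps(\cdot,x_d,\cdot)|^2_{H^{m+1}_T}$ at each $x_d$ in a way uniform in the substitution $\xi_d=x_d/\eps$, integrate in $x_d$ over the compact support $[0,D]$, and identify the resulting quantity with $\sup_{\xi_d\geq 0}|\cU(\cdot,\cdot,\xi_d)|^2_{L^2(x_d,H^{m+1}_T)}$, which is dominated by $|\cU|^2_{\cE^m_T}$.

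The main obstacle I anticipate in the $L^2$ piece is handling the cross-terms $V_{je_m}(x)\overline{V_{je_{m'}}(x)}e^{ij(\uomega_m-\bar\uomega_{m'})\xi_d}$ with $m\neq m'$ arising inside $|\sum_m V_{je_m}(x)e^{ij\uomega_m\xi_d}|^2_{L^2(x')}$: the pointwise bound $|e^{ij\uomega_m\xi_d}|\leq 1$ controls the diagonal $m=m'$ terms, but the off-diagonal phases $e^{ij(\uomega_m-\bar\uomega_{m'})\xi_d}$ also have modulus at most one on $\{\xi_d\geq 0\}$ (since $j(\mathrm{Im}\,\uomega_m+\mathrm{Im}\,\uomega_{m'})\geq 0$ for the $(j,m,m')$ triples that actually contribute), and this is what allows the $L^2(x_d)$-integral after the substitution $\xi_d=x_d/\eps$ to be absorbed into the sup-in-$\xi_d$ of the corresponding $L^2(x_d,H^{m+1}_T)$-norm of $\cU$.
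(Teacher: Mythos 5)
Your bound for the $C(x_d,H^m_T(x',\theta_0))$ component of the $E^m_T$-norm is correct (and indeed needs no polarization), and your identification of the $\theta_0$-Fourier coefficients of $\cU$ as $\sum_{m:\,j\in Z_m}V_{je_m}(x)e^{ij\uomega_m\xi_d}$ with $|e^{ij\uomega_m\xi_d}|\le 1$ is also correct. The gap is exactly where you flag the ``main obstacle'': the modulus bound $|e^{ij(\uomega_m-\bar\uomega_{m'})\xi_d}|\le 1$ on the off-diagonal phases does not allow the substituted $x_d$-integral to be ``absorbed'' into $\sup_{\xi_d\ge 0}$ of the unsubstituted one. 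The cross terms are complex and signed, and after the substitution $\xi_d=x_d/\eps$ their phase can be cancelled by oscillation hidden in the $x_d$-dependence of the coefficients $V_{je_m}$ --- nothing penalizes this, since neither $H^{m+1}_T(x',\theta_0)$ nor $E^m_T$ involves $x_d$-derivatives. Concretely, take two hyperbolic modes $m\ne m'$ with $\textrm{Ker }L(d\phi_m)$ not orthogonal to $\textrm{Ker }L(d\phi_{m'})$ (the generic situation), and set $V_{je_m}(x)=\phi(x')h(x_d)e^{-ij(\uomega_m-\uomega_{m'})x_d/\eps}r_m$, $V_{je_{m'}}(x)=\phi(x')h(x_d)r_{m'}$ with $h\ge 0$ smooth and compactly supported: this $\cV$ satisfies $\bE\cV=\cV$, yet after the substitution the cross term becomes $2\,\mathrm{Re}(r_m\cdot\bar r_{m'})\,|\phi|_{L^2}^2\,h(x_d)^2$, of one sign in $x_d$ and contributing a fixed positive multiple of $\int h^2\,dx_d$, whereas for each \emph{fixed} $\xi_d$ the same cross term oscillates rapidly in $x_d$ and its integral is only of size $|\widehat{h^2}\bigl(j(\uomega_m-\uomega_{m'})/\eps\bigr)|$, which is strictly smaller. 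So the intermediate inequality your plan rests on (``$\int_{x_d}$ after substitution $\le \sup_{\xi_d}\int_{x_d}$'' for the relevant integrands) cannot follow from the pointwise modulus bound, and it fails term by term for the cross terms.

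What the diagonal information does give, via Cauchy--Schwarz in $m$ and the uniform boundedness of the projectors $P_m$ from Lemma \ref{lem2}, is an estimate of the form $|\cU_\eps|_{E^m_T}\le C\,|\cU|_{\cE^m_T}$ with $C$ depending on $M$ and on the angles between the spaces $\textrm{Ker }L(d\phi_m)$ --- weaker than the constant-one statement \eqref{e13}, though an $\eps$-independent constant is all that the error analysis (e.g. \eqref{new36}) actually uses. Note also that the paper itself supplies no computation here: its proof is a citation of Lemma 2.25(b) together with Lemma 2.7 of \cite{cgw}, and the content of those lemmas is precisely the control of this $\xi_d$--$x_d$ interference for profiles of the present form; it is not obtained by termwise modulus bounds on the cross phases. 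To complete your proof you must either import that argument (treating the full $H^{m+1}_T(x',\theta_0)$ norm of the slice before integrating in $x_d$, and tracking exactly how the supremum in $\xi_d$ enters the $L^2(x_d)$ component) or settle for the constant-$C$ version and justify that it suffices where the lemma is invoked.
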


\begin{proof}
The proof is almost identical to the argument used in \cite{cgw} to prove Lemma 2.25 (b) with Lemma 2.7.
\end{proof}

\begin{proof}[Proof of Theorem \ref{e1}]
It suffices to prove boundedness of the family $\cU^0_{\eps}$ in $E^s_{T_0}$ along with the following three statements:
\begin{align}\label{e8}
\begin{split}
&(a)\;\lim_{n\to\infty}U^n_\eps= U_\eps\text{ in }   E^{s-1}_{T_0} \text{ uniformly  with respect to }\eps\in (0,\eps_0]\\
&(b) \;\lim_{n\to\infty}\cU^{0,n}_{\eps}= \cU^0_{\eps}\text{ in }   E^{s-1}_{T_0} \text{  uniformly  with respect to }\eps\in (0,\eps_0]\\
&(c) \;\text{For each }n\;\; \lim_{\eps\to 0}|U^n_\eps-\cU^{0,n}_{\eps}|_{E^{s-1}_{T_0}}=0.
\end{split}
\end{align}

The uniform boundedness of $U_\eps$ and the $U^n_\eps$ in $E^s_{T_0}$ and that \eqref{e8}(a) holds are proved in \cite{williams2}, Theorem 7.1, with the use of the iteration scheme \eqref{e5} and the following linear estimate:

\begin{prop}[\cite{williams2}, Cor. 7.2]\label{e9}
Let $s\geq [M_0+\frac{d+1}{2}]$ and consider the  problem \eqref{e5}, where $G\in H^{s+1}_T$ has compact support and vanishes in $t\leq 0$, and where the right side of \eqref{e5}(a) is replaced by $\cF\in \E^s_T$ with $\mathrm{supp}\; \cF\subset \{t\geq 0, 0\leq x_d\leq E\}$.
Suppose $U^n_\eps\in E^s_T$ has compact $x-$support and that for some $K>0$, $\eps_1>0$  we have
\begin{align}\label{e9a}
|U^n_\eps|_{E^s_T}+|\eps\partial_{x_d}U^n_\eps|_{L^\infty}\leq K \text{ for }\eps\in (0,\eps_1].
\end{align}
Then there exist  constants $T_0(K)$ and $\eps_0(K)\leq \eps_1$ such that for $0<\epsilon\leq\epsilon_0$ and  $T\leq T_0$ we have
\begin{align}\label{e10}
|U^{n+1}_\epsilon|_{E^s_T}+\sqrt{T}\langle U^{n+1}_\epsilon\rangle_{s+1,T}\leq C(K,E)\sqrt{T}\left(|\cF|_{E^s_T}+\langle G\rangle_{s+1,T}\right).
\end{align}
\end{prop}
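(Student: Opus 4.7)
The plan is to establish \eqref{e10} as a Kreiss--M\'etivier type energy estimate for the singular linearized hyperbolic IBVP \eqref{e5}, treating $\eps^{-1}$ as a semiclassical parameter attached to the tangential derivative $\partial_{\theta_0}$ and exploiting Assumption \ref{assumption3} to obtain a Kreiss symmetrizer that remains coercive uniformly in $\eps$. I would carry this out in three stages: an $L^2$-level symmetrizer estimate for the linearization frozen at $U^n_\eps = 0$, a commutator argument to promote to $E^s_T$, and a Moser/Gr\"onwall perturbation to reinsert the variable coefficients and produce the $\sqrt{T}$ time factor.

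For the first stage I would Fourier--Laplace transform in $(t, y, \theta_0)$ with weight $e^{-\gamma t}$. The Fourier variable $k$ dual to $\theta_0$ combines with the boundary frequency $\beta$ to produce a shifted frequency $\zeta_\eps = (\tau - i\gamma + \beta_0 k/\eps,\, \eta + k(\beta_1,\dots,\beta_{d-1})/\eps) \in \Xi$, and by homogeneity of $\cA$ the normal ODE has principal symbol $\cA(\zeta_\eps)$. Proposition \ref{thm1} and Assumption \ref{assumption3} supply a Kreiss symmetrizer $R(\zeta)$ satisfying a pointwise inequality of the form $\re(R\cA) \geq \gamma I - C B(0)^* B(0)$; integrating by parts in $x_d$ with this symmetrizer and applying Plancherel produces the baseline estimate
\begin{align*}
\gamma |U_\eps|_\gamma^2 + |U_\eps(x',0,\theta_0)|_\gamma^2 \leq C\bigl(\gamma^{-1}|\cF|_\gamma^2 + |G|_\gamma^2\bigr),
\end{align*}
where $|\cdot|_\gamma$ denotes the $e^{-\gamma t}$-weighted $L^2$ norm and the constants are independent of $\eps$. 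To promote this to the $E^s_T$ norm I would commute tangential derivatives $\partial_{x',\theta_0}^\alpha$, $|\alpha| \leq s$, past the singular operator: these commute exactly with $\partial_{\theta_0}/\eps$ and with constant coefficients, so no $\eps^{-1}$ loss appears at the frozen level. The normal component $L^2(x_d, H^{s+1}_T)$ is then recovered by solving \eqref{e5}(a) for $\partial_{x_d} U^{n+1}_\eps$, and the $\sqrt{T}$ prefactor in \eqref{e10} arises by taking $\gamma \sim 1/T$ before undoing the exponential weight on test functions supported in $t \leq T$.

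To pass from frozen to variable coefficients $\tilde{A}_j(\eps U^n_\eps)$ and $B(\eps U^n_\eps)$, I would paralinearize using Bony's calculus and apply Moser estimates in $E^s_T$, treating the variable coefficient operator as the frozen one plus a perturbation of size $\eps K + \sqrt{T}\, K$ under the assumed bound $|U^n_\eps|_{E^s_T} \leq K$. The hypothesis $|\eps \partial_{x_d} U^n_\eps|_{L^\infty} \leq K$ is essential here: normal derivatives of $\tilde{A}_j(\eps U^n_\eps)$ factor as $\partial_u \tilde{A}_j \cdot (\eps \partial_{x_d} U^n_\eps)$, so this bound ensures that $\partial_{x_d} \tilde{A}_j(\eps U^n_\eps)$ remains $O(1)$ in $L^\infty$ even though $\partial_{x_d} U^n_\eps$ itself may be of size $1/\eps$, which is exactly what prevents the commutator with the singular coefficient $\beta_j \partial_{\theta_0}/\eps$ from producing uncontrolled negative powers of $\eps$. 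A Gr\"onwall argument in $t$ combined with the smallness of $T_0$ and $\eps_0$ then absorbs the perturbation into the left-hand side and yields \eqref{e10}.

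The main obstacle is ensuring that paradifferential commutators involving the singular coefficient $\eps^{-1} \beta_j \partial_{\theta_0}$ do not produce uncontrolled negative powers of $\eps$ anywhere else in the argument. This requires careful tracking of how the paralinearization remainders for $\tilde{A}_j$ and $B$ interact with the singular principal symbol, and it is precisely here that both the smallness of $|\eps U^n_\eps|_{L^\infty}$ and the normal-derivative bound just discussed are exploited in tandem. A subsidiary difficulty is the high regularity threshold $s \geq [M_0 + (d+1)/2]$ with $M_0 = 2(d+2)+1$, which is what guarantees that all Moser products and paralinearization remainders for composition with the nonlinearities $\tilde{A}_j$ and $B$ lose no derivatives relative to the symmetrizer estimate, allowing the full estimate \eqref{e10} to close at the stated level of regularity.
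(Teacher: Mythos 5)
This proposition is not proved in the paper at all: it is quoted verbatim from Williams \cite{williams2} (Cor.\ 7.2), so there is no internal argument to compare against, and your sketch is in effect an attempt to reprove that external theorem. As an outline it points in the right general direction (uniform-in-$\eps$ Kreiss-type estimates built from Assumption \ref{assumption3} via Proposition \ref{thm1}, with the shifted frequencies $\zeta+k\beta/\eps$), but it has a genuine gap at the step where you pass from the frozen linearization at $U^n_\eps=0$ to the variable coefficients. The difference $\bigl(\tilde A_j(\eps U^n_\eps)-\tilde A_j(0)\bigr)\frac{\beta_j\partial_{\theta_0}}{\eps}$ is a \emph{first-order} tangential operator whose coefficient is only $O(\eps K)$ in $L^\infty$, so after division by $\eps$ it acts with size $O(K)$ on $\partial_{\theta_0}U^{n+1}_\eps$; such a term cannot be absorbed into a frozen-coefficient Kreiss estimate by taking $\gamma\sim 1/T$ large or $\eps$ small, because the baseline estimate $\gamma\|u\|^2+|u(0)|^2\lesssim \gamma^{-1}\|f\|^2+|g|^2$ gains nothing against first-order perturbations of non-small size. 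Likewise, Bony's standard paradifferential calculus does not apply as stated: the symbols here depend on the singular frequencies $(\eta+k\beta/\eps,k)$ and the coefficients are composed with functions that themselves oscillate at scale $1/\eps$ in the combined variables $(x',\theta_0)$, which is outside the classical symbol classes. This is precisely why \cite{williams2} develops a dedicated calculus of singular pseudodifferential operators and constructs the Kreiss symmetrizer \emph{within} that calculus, for the variable-coefficient singular operator itself rather than as a perturbation of the constant-coefficient one; the hypothesis $|\eps\partial_{x_d}U^n_\eps|_{L^\infty}\leq K$ and the regularity threshold $s\geq[M_0+\frac{d+1}{2}]$ enter through that machinery. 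So while your heuristics about why the normal-derivative bound matters are sensible, the proposal as written would not close, and the honest course here is to cite the result, as the paper does, rather than treat it as provable by a frozen-coefficient symmetrizer plus Moser--Gr\"onwall perturbation.
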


The desired boundedness of $\cU^0_\eps$ and the $\cU^{0,n}_\eps$ in $E^s_{T_0}$ follows from the boundedness of $\cV^0$ and the uniform boundedness of the $\cV^{0,n}$ in $\bH^{s+1}_{T_0}$, considered with Proposition \ref{propA} and Lemma \ref{e11}. Similarly, from the fact that
\begin{align}
\lim_{n\rightarrow\infty} \cV^{0,n} = \cV^0 \textrm{ in } \bH^{s}_{T_0},
\end{align}
we get \eqref{e8}(b).

Now we proceed by proving \eqref{e8}(c). Fix $\delta>0$. Noting $\bE\cV^{0,n}=\cV^{0,n}$ and $\bE\cV^{0,n+1}=\cV^{0,n+1}$, choose finite partial trigonometric polynomial sums $\cV^{0,n}_p$ and $\cV^{0,n+1}_p$ of (resp.) $\cV^{0,n}$ and $\cV^{0,n+1}$ such that
\begin{align}\label{e17}
\begin{split}
&(a)\;\bE\cV^{0,n}_p=\cV^{0,n}_p \text{ and }\bE\cV^{0,n+1}_p=\cV^{0,n+1}_p,\\
&(b)\;|\cV^{0,n}-\cV^{0,n}_p|_{H^{s+1}_{T_0}}<\delta, |\cV^{0,n+1}-\cV^{0,n+1}_p|_{H^{s+1}_{T_0}}<\delta,
\end{split}
\end{align}
and note that, as a result of \eqref{e17}(b), we also have
\begin{align}
|\partial_{x_d}\cV^{0,n+1}-\partial_{x_d}\cV^{0,n+1}_p|_{H^s_{T_0}}<\delta.
\end{align}
Observe then, as a consequence of Proposition \ref{propA}, we get
\begin{align}\label{new36}
|\cU^{0,n}-\cU^{0,n}_p|_{\cE^s_{T_0}}<C\delta, |\cU^{0,n+1}-\cU^{0,n+1}_p|_{\cE^s_{T_0}}<C\delta,  \text{ and }|\partial_{x_d}\cU^{0,n+1}-\partial_{x_d}\cU^{0,n+1}_p|_{\cE^{s-1}_{T_0}}<C\delta,
\end{align}
where $\cU^{0,n}_p$ and $\cU^{0,n+1}_p$ are obtained from (resp.) $\cV^{0,n}_p$ and $\cV^{0,n+1}_p$ via the substitution $\theta=\theta(\theta_0,\xi_d)$. The induction assumption is
\begin{align}
\lim_{\epsilon\rightarrow 0}|U^n_\eps-\cU^{0,n}_\eps|_{E^{s-1}_{T_0}}=0.
\end{align}
With the boundedness of the $U^n_\eps$ in $E^s_{T_0}$, it follows
\begin{align}\label{new45}
\lim_{\eps\to 0}|F(\eps U^n_\eps)(U^n_\eps)-F(0)\cU^{0,n}_\eps |_{E^{s-1}_{T_0}}=0.
\end{align}
Observe that since $\cV^{0,n}$ and $\cV^{0,n}_p$ are invariant under $\bE$, Lemma \ref{e11} applies. Thus, using the estimate \eqref{e13}, from \eqref{new45}  and \eqref{new36} we obtain
\begin{align}\label{new37}
|F(\eps U^n_\eps)(U^n_\eps)-F(0)\cU^{0,n}_{p,\eps} |_{E^{s-1}_{T_0}}\leq C\delta+c(\eps),
\end{align}
where $c(\eps)\to0$ as $\eps\to0$.

 Now we define
\begin{align}\label{d2}
\mathcal{G}_p=\tilde{L}(\partial_x)\mathcal{V}^{0,n+1}_p+\mathcal{M}(\mathcal{V}^{0,n}_p)\partial_\theta\mathcal{V}^{0,n+1}_p.
\end{align}
Then
\begin{align}
\begin{split}
\mathbb{E}(\mathcal{G}_p-F(0)\mathcal{V}^{0,n}_p)
=& \mathbb{E}(\tilde{L}(\partial_x)(\cV^{0,n+1}_p-\cV^{0,n+1})) +\\
& \mathbb{E}(\cM(\cV^{0,n}_p)\partial_\theta \cV^{0,n+1}_p-\cM(\cV^{0,n})\partial_\theta \cV^{0,n+1}) +\\
& \mathbb{E}(F(0)(\cV^{0,n}-\cV^{0,n}_p)) +R^{n+1}.
\end{split}
\end{align}
Thus using \eqref{e17}(b) and continuity of both $\bE:H^{s;2}\rightarrow H^{s;1}$ and multiplication from $H^{s;1}\times H^{s;1}\rightarrow H^{s;2}$, we get
\begin{align}\label{d3}
|\mathbb{E}\mathcal{G}_p-\mathbb{E}(F(0)\mathcal{V}^{0,n}_p)-R^{n+1}|_{H^s_{T_0}}=O(\delta).
\end{align}

We define the operator
\begin{align}\label{e25}
\bL_0=\tilde L(\partial_x)+\frac{1}{\eps}\tilde L(d\phi_0)\partial_{\theta_0} + \cM'(\cU^{0,n}_{p,\eps})\partial_{\theta_0}.
\end{align}
We claim
\begin{align}\label{e26}
|\bL_0 U^{n+1}_\eps-F(\eps U^n_\eps)U^n_\eps|_{E^{s-1}_{T_0}}\leq C\delta+c(\eps),
\end{align}
where $c(\eps)\to 0$ as $\eps\to 0$. This follows from \eqref{e5}(a) and
\begin{align}\label{e27}
\begin{split}
&|\tilde A_j(\eps U^n_\eps)\partial_{x_j}U^{n+1}_\eps-\tilde A_j(0)\partial_{x_j}U^{n+1}_\eps|_{E^{s-1}_{T_0}}=O(\eps)\\
&\left|\frac{1}{\eps}\tilde A_j(\eps U^n_\eps)\beta_j\partial_{\theta_0}U^{n+1}_\eps-\left(\frac{1}{\eps}\tilde A_j(0)\beta_j\partial_{\theta_0}U^{n+1}_\eps+\partial_u\tilde A_j(0) U^n_\eps\beta_j\partial_{\theta_0}U^{n+1}_\eps\right)\right|_{E^{s-1}_{T_0}}=O(\eps)\\&\left|\partial_u\tilde A_j(0)(U^n_\eps-\cU^{0,n}_{p,\eps})\beta_j\partial_{\theta_0}U^{n+1}_\eps\right|_{E^{s-1}_{T_0}}\leq C|U^n_\eps-\cU^{0,n}_{p,\eps}|_{E^{s-1}_{T_0}}\leq c(\eps)+O(\delta).
\end{split}
\end{align}

Setting $\cG'_p=\tilde L(\partial_x)\cU^{0,n+1}_p +\cM'(\cU^{0,n}_p)\partial_{\theta_0}\cU^{0,n+1}_p$, since $\cL'(\partial_{\theta_0},\partial_{\xi_d})\cU^{0,n+1}_p=0$, we get
\begin{align}\label{e28}
\bL_0\cU^{0,n+1}_{p,\eps}=\cG'_{p,\eps}.
\end{align}
From now on we use $|_{(\theta_0,\xi_d)}$ to denote evaluation at $\theta=\theta(\theta_0,\xi_d)$, and $|_{(\theta_0,\xi_d),\epsilon}$ to indicate $|_{(\theta_0,\xi_d)}$ followed by evaluation at $\xi_d=\frac{x_d}{\epsilon}$. It is easy to check $\mathcal{G}'_p=\mathcal{G}_p|_{(\theta_0,\xi_d)}$, particularly because $\cV^{0,n}_p$, $\cV^{0,n+1}_p$  are finite polynomials. Thus
\begin{eqnarray}
\mathbb{L}_0\mathcal{U}^{0,n+1}_{p,\epsilon}-F(0)\mathcal{U}^{0,n}_{p,\epsilon}
&=& \mathcal{G}'_{p,\epsilon}-F(0)\mathcal{U}^{0,n}_{p,\epsilon} \nonumber \\
&=& [\mathcal{G}_p-F(0)\mathcal{V}^{0,n}_p]|_{(\theta_0,\xi_d),\epsilon} \\
&=& [(\mathbb{E}(\mathcal{G}_p-F(0)\mathcal{V}^{0,n}_p)-R^{n+1})+R^{n+1}+(I-\mathbb{E})(\mathcal{G}_p-F(0)\mathcal{V}^{0,n}_p)]|_{(\theta_0,\xi_d),\epsilon}\label{new19}
\end{eqnarray}
We claim:\\
(i)
\begin{align}\label{new20}
|(\mathbb{E}(\mathcal{G}_p-F(0)\mathcal{V}^{0,n}_p)-R^{n+1})|_{(\theta_0,\xi_d),\epsilon}|_{E^{s-1}_{T_0}}=O(\delta),
\end{align}
(ii)
\begin{align}\label{new4}
\lim_{\epsilon\rightarrow 0}\left| R^{n+1} |_{(\theta_0,\xi_d),\epsilon}\right|_{E^{s-1}_{T_0}(x,\theta_0)}=0,
\end{align}
and (iii) there exists $\mathcal{V}^1_p\in H^{s;2}_{T_0}$ such that for $\mathcal{U}^1_p=\mathcal{V}^1_p|_{\theta(\theta_0,\xi_d)}$,
\begin{align}\label{new17}
\mathcal{L}'(\partial_{\theta_0},\partial_{\xi_d})\mathcal{U}^1_p=-[(I-\bE)(\mathcal{G}_p-F(0)\mathcal{V}^{0,n}_p)]|_{\theta(\theta_0,\xi_d)}.
\end{align}
(i) follows from \eqref{d3} followed by application of Proposition \ref{propA} and then Lemma \ref{e11}.\\
To see (ii), observe it is clear from our definition of $R^{n+1}$ that the continuity of $\bE$ implies $R^{n+1}(x,\theta)\in H^{s;1}_{T_0}(x,\theta)$, and recall from Remark \ref{new5} that we have $\bE_e R^{n+1}= R^{n+1}$ and \eqref{new6}. By requiring \eqref{rename6}, and thus \eqref{b2}, to hold at $x_d=0$, we have obtained $R^{n+1}|_{x_d=0}=0$. So Proposition \ref{new3} grants us \eqref{new4}.\\
Regarding (iii), note that, by Remark \ref{new18}, \eqref{new17} holds if and only if
\begin{align}
\mathcal{L}'(\partial_{\theta_0},\partial_{\xi_d})\mathcal{U}^1_p=-[(I-\bE^\flat)(\mathcal{G}_p-F(0)\mathcal{V}^{0,n}_p)]|_{\theta(\theta_0,\xi_d)},
\end{align}
for which a sufficient condition is that, where $\mathcal{U}^1_p=\mathcal{V}^1_p|_{\theta(\theta_0,\xi_d)}$,
\begin{align}
\mathcal{L}(\partial_\theta)\mathcal{V}^1_p=-(I-\bE^\flat)(\mathcal{G}_p-F(0)\mathcal{V}^{0,n}_p).
\end{align}
Proposition \ref{new7} guarantees the existence of such $\mathcal{V}^1_p$, so proof of (iii) is complete.\\
Noting 
\begin{align}
\mathbb{L}_0(\epsilon \mathcal{U}^1_{p,\epsilon})=(\mathcal{L}'(\partial_{\theta_0},\partial_{\xi_d})\mathcal{U}^1_p)_\epsilon+(\tilde L(\partial_x)\epsilon \mathcal{U}^1_p)_\epsilon+\mathcal{M}'(\mathcal{U}^{0,n}_{p,\epsilon})\partial_{\theta_0}(\epsilon\mathcal{U}^1_{p,\epsilon}),
\end{align}
it follows from \eqref{new19}-\eqref{new17} that
\begin{align}\label{new38}
|\mathbb{L}_0(\mathcal{U}^{0,n+1}_{p,\epsilon}+\epsilon\mathcal{U}^1_{p,\epsilon})-F(0)\mathcal{U}^{0,n}_{p,\epsilon}|_{E^{s-1}_{T_0}}
\leq C\delta+c(\epsilon)+K(\delta)\epsilon,
\end{align}
where $c(\epsilon)$ has been altered, but still tends to zero as $\epsilon\rightarrow 0$. It follows from equations \eqref{new37}, \eqref{e26}, and \eqref{new38} that
\begin{align}\label{new38b}
|\mathbb{L}_0(U^{n+1}_\eps-(\mathcal{U}^{0,n+1}_{p,\epsilon}+\epsilon\mathcal{U}^1_{p,\epsilon}))|_{E^{s-1}_{T_0}}
\leq C\delta+c(\epsilon)+K(\delta)\epsilon.
\end{align}

Now we claim that we have the following estimates:
\begin{align}\label{e34}
&\begin{split}
&(a)\;\left|\left(\partial_{x_d}+\bA(\eps \cU^{0,n}_{p,\eps},\partial_{x'}+\frac{\beta\cdot\partial_{\theta_0}}{\eps})\right)\left(U^{n+1}_\eps-(\cU^{0,n+1}_{p,\eps}+\eps\cU^1_{p,\eps})\right)\right|_{E^{s-1}_{T_0}}\leq C\delta +c(\eps)+K(\delta)\eps\\
&(b)\left|B(\eps \cU^{0,n}_{p,\eps})\left(U^{n+1}_\eps-(\cU^{0,n+1}_{p,\eps}+\eps\cU^1_{p,\eps})\right)\right|_{H^s_{T_0}}\leq C\delta +c(\eps)+K(\delta)\eps.
\end{split}
\end{align}
That \eqref{e34}(a) holds follows from \eqref{new38b} with the use of estimates similar to \eqref{e27}, and \eqref{e34}(b) follows from the boundary conditions \eqref{new61}(b) and \eqref{fullConditionsIt}(c) together with Proposition \ref{propA} and Lemma \ref{e11}. After applying the estimate of Proposition \ref{e9}, we get that
\begin{align}
|U^{n+1}_\eps-(\mathcal{U}^{0,n+1}_{p,\epsilon}+\epsilon\mathcal{U}^1_{p,\epsilon})|_{E^{s-1}_{T_0}}
\leq C\delta+c(\epsilon)+K(\delta)\epsilon.
\end{align}
from which we conclude
\begin{align}
|U^{n+1}_\epsilon-\mathcal{U}^{0,n+1}_\epsilon|_{E^{s-1}_{T_0}}\leq C\delta + c(\epsilon)+K(\delta)\epsilon.
\end{align}
This finishes the induction step, completing the proof of the theorem.
\end{proof}

\newpage


\begin{thebibliography}{1}

\bibitem[1]{cg}
 J.-F. Coulombel and O. Gues. Geometric optics expansions with amplification for hyperbolic boundary value problems: linear problems. \textit{Ann. Inst. Fourier}, 60:2183--2233, 2010.

\bibitem[2]{cgw}
 J.-F. Coulombel, O. Gues, and M. Williams. Resonant leading order geometric optics expansions for quasilinear hyperbolic fixed and free boundary problems. \textit{Commun. in Partial Differ. Equations}, 36:1797--1859, 2011.

\bibitem[3]{cp}
  J. Chazaran, J. Piriou. \textit{Introduction to the Theory of Linear Partial Differential Equations}. North Holland, 1982.

\bibitem[4]{gmwz}
  O. Gues, G. M\'{e}tivier, M. Williams, and K. Zumbrun. Existence and stability of noncharacteristic boundary-layers for the compressible Navier-Stokes and viscous MHD equations, \textit{Archive for Rational Mechanics and Analysis}, 197, (2010), 1-87.

\bibitem[5]{kreiss}
 H.-O. Kreiss. Initial boundary value problems for hyperbolic systems. \textit{Comm. Pure Appl. Math.}, 23:277--298, 1970.

\bibitem[6]{lescarret}
 V. Lescarret. Wave transmission in dispersive media. \textit{Math. Models Methods Appl. Sci.}, 17(4):485--535, 2007.

\bibitem[7]{marcou}
 A. Marcou, Interactions d'ondes et de bord,  Doctoral Thesis, Universite de Bordeaux I, 2011.  

\bibitem[8]{metivier}
 G. M\'{e}tivier. The block structure condition for symmetric hyperbolic systems. \textit{Bull. London Math. Soc.}, 32:689--702, 2000.

\bibitem[9]{taylor}
 M. Taylor. \textit{Partial Differential Equations I}, Springer-Verlag, New York, 1996.

\bibitem[10]{williams1}
 M. Williams. Boundary layers and glancing blow-up in nonlinear geometric optics. \textit{Ann. Sci. \'Ecole Norm. Sup. (4)},. 33(3):383-–432, 2000.
 
\bibitem[11]{williams2}
 M. Williams. Singular pseudodifferential operators, symmetrizers, and oscillatory multidimensional shocks. \textit{J. Functional Analysis}, 191:132--209, 2002.


\end{thebibliography}
\end{document}